\newcommand{\losemi}{{\otimes \kern -.78em \ltimes}}
\newcommand{\rosemi}{{\otimes \kern -.78em \rtimes}}
\newcommand{\Hom}{\ensuremath{\operatorname{Hom}}}
\newcommand{\Ext}{\operatorname{Ext}}
\newcommand{\ind}{\operatorname{ind}}
\renewcommand{\a}{\alpha}
\newcommand{\ep}{\epsilon}
\newcommand{\ga}{\gamma}
\newcommand{\s}{\sigma}
\newcommand{\om}{\omega}
\newcommand{\la}{\lambda}
\newcommand{\St}{\operatorname{St}}
\newcommand{\hQ}{\widehat{Q}}
\newcommand{\hZ}{\widehat{Z}}
\newcommand{\hL}{\widehat{L}}
\newcommand{\leqnomode}{\tagsleft@true}
\newcommand{\reqnomode}{\tagsleft@false}
\newtheorem{theorem}{Theorem}[subsection]
\let\c@fact\c@theorem\makeatother
\let\c@note\c@theorem\makeatother
\newtheorem{lemma}{Lemma}[subsection]
\let\c@lemma\c@theorem\makeatother
\let\c@lemma\c@theorem\makeatother
\newtheorem{quest}{Question}[subsection]
\let\c@quest\c@theorem\makeatother
\newtheorem{prop}{Proposition}[subsection]
\let\c@prop\c@theorem\makeatother
\newtheorem{conj}{Conjecture}[subsection]
\let\c@conj\c@theorem\makeatother
\let\c@cor\c@theorem\makeatother
\let\c@defn\c@theorem\makeatother
\theoremstyle{definition}
\newtheorem{remark}{Remark}[subsection]
\let\c@remark\c@theorem\makeatother
\let\c@example\c@theorem\makeatother
\numberwithin{equation}{subsection}
\crefname{theorem}{Theorem}{Theorems}
\crefname{fact}{Fact}{Facts}
\crefname{note}{Note}{Notes}
\crefname{lemma}{Lemma}{Lemmas}
\crefname{alg}{Algorithm}{Algorithms}
\crefname{remark}{Remark}{Remarks}
\crefname{example}{Example}{Examples}
\crefname{prop}{Proposition}{Propositions}
\crefname{conj}{Conjecture}{Conjectures}
\crefname{cor}{Corollary}{Corollaries}
\crefname{defn}{Definition}{Definitions}
\crefname{equation}{\!\!}{\!\!} 
\newcounter{listequation}
\begin{document}

\title{On Donkin's Tilting Module Conjecture II: Counterexamples}

\begin{abstract} In this paper we produce infinite families of counterexamples to Jantzen's question posed in 1980 on the existence of Weyl $p$-filtrations for Weyl modules for an algebraic group and 
Donkin's Tilting Module Conjecture formulated in 1990. New techniques to exhibit explicit examples are provided along with methods to produce counterexamples in large rank from 
counterexamples in small rank. Counterexamples can be produced via our methods for all groups other than when the root system is of type $\rm{A}_{n}$ or $\rm{B}_{2}$. 
\end{abstract}

\author{\sc Christopher P. Bendel}
\address
{Department of Mathematics, Statistics and Computer Science\\
University of
Wisconsin-Stout \\
Menomonie\\ WI~54751, USA}
\thanks{Research of the first author was supported in part by Simons Foundation Collaboration Grant 317062}
\email{bendelc@uwstout.edu}

\author{\sc Daniel K. Nakano}
\address
{Department of Mathematics\\ University of Georgia \\
Athens\\ GA~30602, USA}
\thanks{Research of the second author was supported in part by
NSF grants DMS-1701768 and DMS-2101941}
\email{nakano@math.uga.edu}

\author{\sc Cornelius Pillen}
\address{Department of Mathematics and Statistics \\ University
of South
Alabama\\
Mobile\\ AL~36688, USA}
\thanks{Research of the third author was supported in part by Simons Foundation Collaboration Grant 245236}
\email{pillen@southalabama.edu}

\author{Paul Sobaje}
\address{Department of Mathematical Sciences \\
          Georgia Southern University\\
          Statesboro, GA~30458, USA}
\email{psobaje@georgiasouthern.edu}

\maketitle
\section{Introduction}

\subsection{} Let $G$ be a connected reductive group over an algebraically closed field $k$ of characteristic $p>0$. A fundamental problem in the representation theory of $G$ is to understand the structure of Weyl modules or equivalently the structure of the induced modules $\nabla(\lambda)$ for $\lambda\in X^{+}$ a dominant integral weight. The determination of the composition factors of $\nabla(\lambda)$ for all $\lambda\in X^{+}$ is equivalent to knowing the characters of simple modules for $G$. This central problem still remains open even though there has been much work done to formulate new character formulas using tilting modules and $p$-Kazhdan-Lusztig polynomials. In order to enhance our understanding of the structure of $\nabla(\lambda)$, Jantzen asked the question in 1980 as to whether the induced modules $\nabla(\lambda)$ have good $p$-filtrations. 
Parshall and Scott \cite{PS} proved that this holds as long as $p\geq 2(h-1)$ and the Lusztig Character Formula holds for $G$. Andersen \cite{And19} recently proved that $\nabla(\lambda)$ has a good $p$-filtration as long as $p\geq h(h-2)$. 

Another important and still unresolved problem in the representation theory of $G$ is to determine whether a projective module for a Frobenius kernel of $G$ has a structure as a $G$-module.  The expectation that this should be true dates back to work of Humphreys and Verma \cite{HV}.  Donkin later conjectured that such structures should arise from tilting modules for $G$ \cite{Don}, which is now known to hold for $p \geq 2(h-2)$ (and some small rank cases). In some sense, this conjecture was 
implicit in \cite{HV}, which predates the concept of tilting modules by many years. The general method involved tensoring by the Steinberg representation and locating an important $G$-summand that 
is known to be a tilting module by a result due to Pillen \cite{P}. 

In \cite{BNPS20}, the authors found counterexamples to (i) Jantzen's Question (JQ) and (ii) the Tilting Module Conjecture (TMC).  For exact statements of JQ and the TMC, see 
Question~\ref{Q:JQ} and Conjecture~\ref{C:TMC}. The counterexamples occur for the simple algebraic group of type $\text{G}_2$ in characteristic $2$, where it was shown that the tilting module $T(2,2)$ is not indecomposable over the Frobenius kernel of $G$ and that $\nabla(2,1)$ does not have a good $p$-filtration.  As the first surprising example of its kind, it provided some clarity regarding the TMC, explaining if nothing else why it had resisted proof for nearly 30 years. After that paper was written, a number of questions still remained to be answered. For instance, was this example an anomaly, or were there many others like it?  There is also the basic question as to what features about the representation theory and cohomology give rise to this counterexample and what causes the tilting module $T(2,2)$ to split over the Frobenius kernel. It should be noted that Kildetoft and Nakano, and Sobaje made explicit connections between the TMC and good $p$-filtrations on $G$-modules (cf. \cite{KN}, \cite{So}).  

\subsection{}\label{S:main} The main goal of this paper is to show that the type $\rm{G}_2$ examples were in fact not anomalies.  We show how to produce large families of counterexamples to JQ and the TMC. Our results are summarized in the following theorem. 

\begin{theorem}\label{T:main} Let $G$ be a simple algebraic group over an algebraically closed field of characteristic $p>0$. Assume that the underlying root system $\Phi$ is not of type 
$\rm{A}_{n}$ or $\rm{B}_{2}$. Then there exists a prime $p$ for which $G$ produces a counterexample to Jantzen's Question and the Tilting Module Conjecture. 
\end{theorem}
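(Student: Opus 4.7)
My plan is two-pronged: first exhibit explicit base counterexamples in small-rank representatives of each admissible root-system family, then propagate them to higher ranks via Levi-subgroup arguments.

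The starting point is the counterexample of \cite{BNPS20} for $\rm{G}_{2}$ at $p=2$, where $T(2,2)$ decomposes nontrivially on restriction to $G_{1}$ and $\nabla(2,1)$ lacks a good $p$-filtration; this handles $\rm{G}_{2}$ outright. For each of the remaining types, I would search, at some small prime $p$, for a restricted dominant weight $\lambda_{0}$ such that the $G_{1}$-projective cover $\hQ_{1}(\lambda_{0})$ has dimension strictly less than the dimension of the tilting module $T(\widetilde{\lambda})$ whose restriction is supposed to realize $\hQ_{1}(\lambda_{0})$ under the TMC. Any strict inequality forces $T(\widetilde{\lambda})$ to decompose nontrivially over $G_{1}$, yielding a counterexample to Donkin's Tilting Module Conjecture; the bridges between the TMC and good $p$-filtrations established in \cite{KN,So} then produce a Weyl module without a good $p$-filtration, answering Jantzen's Question in the negative. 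Plausible base cases to try are $\rm{B}_{3}$, $\rm{C}_{3}$ or $\rm{C}_{4}$, $\rm{D}_{4}$, and one weight per exceptional type $\rm{F}_{4}, \rm{E}_{6}, \rm{E}_{7}, \rm{E}_{8}$.

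To upgrade a small-rank counterexample to higher rank within a classical family, I would embed a small-rank group $H$ as (the derived group of) a Levi subgroup $L$ of $G$, extend the offending weight for $H$ to a dominant weight of $G$ by adjoining zeros on the additional fundamental coweights, and exploit the compatibility of parabolic induction $\ind_{P}^{G}$ with first Frobenius kernels to transport both the dimension mismatch and the failure of the good $p$-filtration. For the exceptional types $\rm{F}_{4}, \rm{E}_{6}, \rm{E}_{7}, \rm{E}_{8}$, each contains a subsystem subgroup of type $\rm{G}_{2}$ (together with various other small-rank subsystem subgroups for which base-case counterexamples can be built), and a similar restriction-induction argument should transfer the TMC failure from the subsystem subgroup to the ambient group.

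The principal obstacle is constructing the small-rank base cases in types other than $\rm{G}_{2}$. For each candidate pair $(\lambda_{0}, p)$, one must compute or at least bound the dimensions of $\hQ_{1}(\lambda_{0})$ and of the candidate tilting modules $T(\widetilde{\lambda})$; this likely demands either direct character computation via $p$-Kazhdan--Lusztig combinatorics, cohomological input analogous to the $\rm{G}_{2}$ analysis of \cite{BNPS20}, or reduction to a rank-two subsystem. The type-$\rm{B}$ family is especially delicate since $\rm{B}_{2}$ is excluded from the conclusion, forcing the base case to start at $\rm{B}_{3}$ with a genuinely new construction rather than by bootstrapping from rank two. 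A secondary difficulty is confirming that the Levi-propagation step actually preserves failure of the TMC for a specific lifted tilting module (rather than merely detecting it abstractly); this will require careful compatibility checks between parabolic induction and restriction to $G_{1}$, and may need case-by-case refinement depending on $p$ and on the structure of the complementary Levi data.
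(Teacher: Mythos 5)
Your overall skeleton — exhibit base counterexamples in $\rm{B}_{3}$, $\rm{C}_{3}$, $\rm{D}_{4}$, use $\rm{G}_{2}$ from \cite{BNPS20}, and propagate to higher rank via Levi subgroups — matches the paper's strategy, and you even guess the correct low-rank representatives. But two of the load-bearing ideas in your proposal do not work as stated.

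First, your detection mechanism is a dimension comparison: find $\lambda_{0} \in X_{1}$ with $\dim \hQ_{1}(\lambda_{0}) < \dim T(\widetilde{\lambda})$. This inequality is indeed equivalent to failure of the TMC at $\lambda_{0}$ (since $\hQ_{1}(\lambda_{0})$ is always a $G_{1}T$-summand of $T(\widetilde{\lambda})$), but it is not a workable criterion: computing $\dim T(\widetilde{\lambda})$ amounts to knowing the Weyl filtration multiplicities of the indecomposable tilting module, which is precisely the kind of data that is out of reach at small primes. The paper avoids this entirely. Its actual engine is structural, not numerical: one finds $\lambda,\mu \in X_{1}$ for which $\Ext^{1}_{G_{1}}(L(\lambda),L(\mu))^{(-1)}$ (or $\Ext^{1}_{G_{1}}(L(\lambda),\nabla(\mu))^{(-1)}$, or $\Hom_{G_{1}}(\hQ_{1}(\lambda),\nabla(\mu))^{(-1)}$) fails to be a tilting module or to admit a good filtration, contradicting \cite[Theorem 9.2.3]{KN} if the TMC held. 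The Jantzen sum formula then supplies the needed composition-factor and layer data. This is a genuinely different, and much more tractable, route — the key observation being Proposition~\ref{P:FirstConstruction} and Theorems~\ref{T:FirstConstruction}--\ref{T:SecondConstruction}, which you do not have. Without them, you cannot actually establish the $\rm{B}_{3}$, $\rm{C}_{3}$, $\rm{D}_{4}$ base cases.

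Second, your treatment of the exceptional types relies on $\rm{G}_{2}$ being a ``subsystem subgroup'' of $\rm{F}_{4}$ and $\rm{E}_{n}$. This is false: the $\rm{G}_{2}$ root system, with its $\sqrt{3}\!:\!1$ length ratio, is not a closed subsystem of $\rm{F}_{4}$ or any $\rm{E}_{n}$, and it is certainly not a Levi. More importantly, the paper's reduction theorems~\ref{T:JQLevi} and~\ref{T:TMCLevi} are proved specifically for Levi subgroups, using the weight-space projection $\bigoplus_{\nu \in \mathbb{N}J}(-)_{\lambda-\nu}$; they do not apply to general subsystem subgroups, and you offer no substitute argument. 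The paper instead reaches $\rm{F}_{4}$ through its $\rm{B}_{3}$ (at $p=2$) and $\rm{C}_{3}$ (at $p=3$) Levis, and reaches $\rm{E}_{6},\rm{E}_{7},\rm{E}_{8}$ through their $\rm{D}_{4}$ Levi. You would need to replace your $\rm{G}_{2}$-based plan with this one, and you would also need to actually prove the Levi compatibility statements (your ``should transfer'' and ``careful compatibility checks'' acknowledge the gap); the proofs of Theorems~\ref{T:JQLevi}, \ref{T:QLevi}, and~\ref{T:TMCLevi} are nontrivial, with Theorem~\ref{T:QLevi} in particular requiring a detailed character computation via $\hZ'$-filtrations and Brauer--Humphreys reciprocity.
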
 

In particular, we can exhibit counterexamples to JQ and the TMC for the following groups. 
\begin{itemize} 
\item $\Phi=\rm{B}_{n}$, $p=2$, $n\geq 3$;
\item $\Phi=\rm{C}_{n}$, $p=3$, $n\geq 3$;
\item $\Phi=\rm{D}_{n}$, $p=2$, $n\geq 4$;
\item $\Phi=\rm{E}_{n}$, $p=2$, $n=6,7,8$;
\item $\Phi=\rm{F}_{4}$, $p=2,3$; 
\item $\Phi=\rm{G}_{2}$, $p=2$. 
\end{itemize} 

The counterexamples in type $\rm{C}_n$ at $p = 3$ are particularly interesting as 3 is a {\em good} prime for the root system.  We anticipate that counterexamples exist for all $\rm{C}_n$ when $n = p$ is prime.  In general, the question of when the TMC holds still seeems quite subtle.  
In \cite{BNPS21} and \cite{And19}, it was shown that JQ and the TMC holds when $\Phi=\rm{B}_{2}$ for all $p$. Moreover, it was shown
in \cite{BNPS21} that the TMC holds in type $\rm{G}_2$ for all $p > 2$ and for all primes for $\Phi=\rm{A}_{n}$ when $n\leq 3$. 
It is still possible that the TMC for $\Phi=\rm{A}_{n}$ holds for all $n$ and all $p$. 

The paper is organized as follows. In Section 2, we introduce the notation and conventions for the paper. Two reduction theorems (cf. Theorems~\ref{T:JQLevi} and ~\ref{T:TMCLevi}) are established for JQ and the TMC that enable one to use counterexamples in low rank to establish counterexamples in large rank via Levi subgroups.

Section 3 focuses on new methods to use $\text{Ext}^{1}$ structural information to produce counterexamples to the TMC. 
Our new examples also give some indication as to why the failure of the TMC is occurring.  Namely, in each case one finds that there are (restricted) dominant weights $\lambda$ and $\mu$ such that the $G/G_{1}$-module
$\Ext_{G_1}^1(L(\lambda),L(\mu))$ is not a tilting module.  Indeed, the counterexamples in this paper were actually discovered by first looking for such behavior in $\text{Ext}$-groups, having already observed its effect on the 
$\Phi=\rm{G}_{2}$, $p=2$, example in \cite{BNPS20}.  
 
In Section 4, counterexamples to JQ and the TMC are worked out for $\Phi=\rm{B}_{3}$, $\rm{C}_{3}$, and $\rm{D}_{4}$ via the methods in Section 3. In the following section (Section 5), the main theorem is proved. 
In Section 6, we show how to produce other counterexamples using the Jantzen filtration. In particular, we exhibit other counterexamples to the  TMC in characteristic $2$ for all groups of type $\text{B}_n$ with $n\geq 3$.  We also construct additional counterexamples in type $\text{C}_n$ for $p=3$ and $n\geq 3$.   

At the end of the paper, in Section 7, we present some open problems involving the connections with the TMC and the question of whether $\Ext_{G_1}^1(L(\lambda),L(\mu))$ is a tilting $G/G_{1}$-module. We view these 
important observations as  opening the door for future investigation. 

\subsection{Acknowledgements} We thank Henning Andersen for comments and suggestions on an earlier version of our manuscript.

\section{Preliminaries} 

\subsection{Notation}\label{S:notation} In this paper we will generally follow the standard conventions in \cite{rags}. Throughout this paper $k$ is an algebraically closed field of characteristic $p>0$. 
Let $G$ be a connected semisimple algebraic group scheme 
defined over ${\mathbb F}_{p}$. The Frobenius morphism is denoted by $F$ and the $r$th Frobenius kernel will be denoted by $G_{r}$.  Given a maximal torus $T$, 
the root system $\Phi$ is associated to the pair $(G,T)$. Let $\Phi^+$ be a set of positive roots and
$\Phi^{-}$ be the corresponding set of negative roots. The set of simple roots determined by $\Phi^+$ is $\Delta=\{\alpha_1,\dots,\alpha_{l}\}$. The ordering of simple roots is 
described in \cite{Hum1} following Bourbaki. 

Let $B$ be the Borel subgroup given by the set of negative roots and let $U$ be
the unipotent radical of $B$. More generally, if $J\subseteq \Delta$, let $P_{J}$ be the
parabolic subgroup relative to $-J$, $L_{J}$ be the Levi factor of $P_{J}$ and $U_{J}$ be the
unipotent radical. Let $\Phi_{J}$ be the root subsystem in $\Phi$ generated by the simple roots in $J$, 
with positive subset $\Phi_{J}^{+} = \Phi_{J}\cap\Phi^{+}$.

Let $\mathbb E$ be the Euclidean space associated with $\Phi$, and
denote the inner product on ${\mathbb E} $ by $\langle\ , \
\rangle$. Set $\alpha_{0}$ to be the highest short root.
Let $\rho$ be the half sum of positive roots and $\alpha^{\vee}$ be the coroot corresponding to
$\alpha\in \Phi$.   The Coxeter number associated to $\Phi$ is $h=\langle \rho,\alpha_{0}^{\vee} \rangle +1$. The Weyl group 
associated to $\Phi$ will be donated by $W$, and, for any $J\subseteq \Delta$, let $W_{J}$ be the subgroup of $W$ generated by reflections corresponding to simple 
roots in $J$. Let $w_0$ (resp. $w_{J,0}$) denote the longest word of $W$ (resp. $W_{J}$, for $J \subseteq \Delta.$). Set $\rho_J$ to be the half-sum of all the roots in $\Phi_J^+$.

Let $X:=X(T)$ be the integral weight lattice spanned by the fundamental weights $\{\omega_1,\dots,\omega_l\}$, $X^{+}$ be the dominant weights for $G$, and $X_{r}$ be the $p^{r}$-restricted weights. 
Moreover, for $J\subseteq \Delta$, let $X_{J}^{+}$ be the weights that are dominant on $J$. That is, $X_{J}^+ := \{\la \in X ~|~ \langle \la,\a^{\vee}\rangle \geq 0 ~\forall~ \a \in J\}$.

Let $\tau:G \rightarrow G$ be the Chevalley antiautomorphism of $G$ that is the identity morphism when restricted to $T$ (see \cite[II.1.16]{rags}).  Given a finite dimensional $G$-module $M$ over $k$, the module $^{\tau}M$ is $M^*$ (the ordinary $k$-linear dual of $M$) as a $k$-vector space, with action $g.f(m)=f(\tau(g).m)$.  This defines a functor from $G$-mod to $G$-mod that preserves the character of $M$.  In particular, it is the identity functor on all simple and tilting modules. 

The following result will be used throughout this section. Note that we follow the convention in \cite{rags} that the set $\mathbb{N}$ includes $0$. 

\begin{lemma}\label{L:wmu} Let $\mu \in X^+$ and $w \in W$ be such that $\mu - w \mu \in \mathbb{N}J$.
Then there exists $w_J \in W_J$ with $w\mu=w_J\mu.$ 
\end{lemma}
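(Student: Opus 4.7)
The plan is to induct on the height $\operatorname{ht}(\mu - w\mu) := \sum_{\gamma \in J} m_\gamma$, where we write $\mu - w\mu = \sum_{\gamma \in J} m_\gamma \gamma$ with $m_\gamma \in \N$ (this uses the hypothesis $\mu-w\mu\in\N J$). The base case $\operatorname{ht}(\mu - w\mu) = 0$ forces $w\mu = \mu$, and we take $w_J = e$.

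For the inductive step, assume $w\mu \neq \mu$. Since $\mu$ is the unique dominant weight in its $W$-orbit, $w\mu$ is not dominant, so some simple root $\alpha \in \Delta$ satisfies $\langle w\mu, \alpha^\vee \rangle < 0$. The first key step is to show that any such $\alpha$ must actually lie in $J$: for $\alpha \notin J$ one computes
\[
\langle w\mu, \alpha^\vee\rangle \;=\; \langle \mu, \alpha^\vee\rangle \;-\; \sum_{\gamma \in J} m_\gamma \langle \gamma, \alpha^\vee\rangle,
\]
and each summand on the right-hand side is non-negative, since $\mu \in X^{+}$, $m_\gamma \geq 0$, and $\langle \gamma, \alpha^\vee \rangle \leq 0$ for distinct simple roots $\gamma \neq \alpha$.

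Fix such $\alpha \in J$ and set $k := -\langle w\mu, \alpha^\vee\rangle > 0$, so that $s_\alpha w\mu = w\mu + k\alpha$. I next verify that $\mu - s_\alpha w\mu \in \N J$: the element lies in $\Z J$ because $\alpha \in J$ (and $\mu - w\mu \in \Z J$), while it also lies in $\N\Phi^{+}$ because $s_\alpha w\mu$ is in the $W$-orbit of the dominant weight $\mu$. An element of $\Z J \cap \N\Phi^{+}$ automatically lies in $\N\Phi_{J}^{+}\subseteq \N J$, since every $\beta \in \Phi^{+}\setminus\Phi_{J}^{+}$ involves some simple root outside $J$ with strictly positive coefficient, which would obstruct cancellation. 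As $\operatorname{ht}(\mu - s_\alpha w\mu) = \operatorname{ht}(\mu - w\mu) - k$ is strictly smaller, the induction hypothesis yields $w_J' \in W_J$ with $s_\alpha w\mu = w_J'\mu$, and then $w_J := s_\alpha w_J' \in W_J$ satisfies $w_J\mu = w\mu$, completing the induction.

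The only delicate point is the second paragraph, where the hypothesis $\mu - w\mu \in \N J$ is combined with the sign structure of the off-diagonal Cartan integers to force the ``raising'' reflection $s_\alpha$ into $W_J$; everything else is a routine orbit/height reduction.
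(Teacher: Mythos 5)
Your proof is correct, and it follows a genuinely different organization than the paper's. The paper's argument is non-inductive: it picks $u \in W_J$ so that $uw\mu$ is $J$-dominant (a standard fact about $W_J$-orbits), shows $\mu - uw\mu \in \mathbb{Z}J \cap \mathbb{N}\Delta = \mathbb{N}J$, and then argues that $uw\mu$ must in fact be $\Delta$-dominant — because a failure at some $\beta \in \Delta\setminus J$ would force $\langle \mu - uw\mu, \beta^\vee\rangle > 0$, impossible for an element of $\mathbb{N}J$ — whence $uw\mu = \mu$ and $w_J = u^{-1}$. Your argument instead builds $w_J$ one simple reflection at a time, by induction on $\operatorname{ht}(\mu - w\mu)$: you use the hypothesis $\mu - w\mu \in \mathbb{N}J$ together with the sign of the off-diagonal Cartan integers to force any simple root $\alpha$ at which $w\mu$ fails dominance to lie in $J$, apply $s_\alpha$, check the hypothesis is preserved, and recurse. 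Both proofs rest on the same two ingredients — the uniqueness of simple-root coefficients (to get $\mathbb{Z}J \cap \mathbb{N}\Delta = \mathbb{N}J$) and the non-positivity of $\langle\gamma,\alpha^\vee\rangle$ for distinct simple roots $\gamma,\alpha$ — but deploy them in mirror-image ways: the paper moves $w\mu$ to a $J$-dominant representative and shows it equals $\mu$, while you move $w\mu$ toward $\mu$ and show each step stays inside $W_J$. Your version is self-contained (it does not invoke the existence of $J$-dominant representatives as a black box), at the cost of being slightly longer; the paper's is more compressed.
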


\begin{proof} There exists $u \in W_J$ such that $\langle u w \mu, \beta^{\vee} \rangle \geq 0$ for all $\beta \in J,$ (i.e., $u w \mu $ is  $J$-dominant).
Since $u \in W_J$, $w\mu - u w \mu \in \mathbb{Z}J.$  With this and the hypothesis,
$\mu -  u w \mu= (\mu - w \mu) + (w\mu - u w \mu) \in \mathbb{Z}J.$  However, $\mu \geq u w \mu$, thus $\mu -  u w\mu \in \mathbb{N}J$. 

If $u w \mu \in X^+$, then $u w \mu = \mu$ and one may choose $w_J = u^{-1} \in W_J$. Thus, $w \mu = w_J \mu.$ 
If $u w \mu$ is not in $X^{+}$, then there exists a $\beta \in \Delta\setminus J$ such that $\langle u w \mu, \beta^{\vee} \rangle < 0,$ which implies that $\langle \mu - u w \mu, \beta^{\vee} \rangle > 0.$ This contradicts the fact that $\mu -  u w\mu \in \mathbb{N}J.$
\end{proof}

\subsection{Representations}\label{S:reps}
 For $\lambda\in X^{+}$, there are four fundamental families of $G$-modules (each having highest weight $\lambda$): $L(\lambda)$ (simple), $\nabla(\lambda)=H^{0}(\lambda)$ (costandard/induced), $\Delta(\lambda) =V(\lambda)$ (standard/Weyl), and $T(\lambda)$ (indecomposable tilting).
Let $\text{St}_r = L((p^r-1)\rho)$ be the $r$th Steinberg module. For $\lambda\in X_{r}$, let $Q_{r}(\lambda)$ denote the $G_{r}$-projective cover (equivalently, injective hull) of $L(\lambda)$ as a $G_{r}$-module. For $\lambda\in X$, if $\widehat{L}_{r}(\lambda)$ is the corresponding simple $G_{r}T$-module, let $\widehat{Q}_{r}(\lambda)$ denote the $G_{r}T$-projective cover (equivalently, injective hull) of $\widehat{L}_{r}(\lambda)$.  
For $\lambda\in X^+$, write $\lambda = \lambda_0 + p^r\lambda_1$ with $\lambda_0\in X_r$ 
and $\lambda_1\in X^+$. Define $\nabla^{(p,r)}(\lambda) = L(\lambda_0)\otimes \nabla(\lambda_1)^{(r)}$, where $(r)$ denotes the twisting of the module action by the 
$r$th Frobenius morphism. 

A $G$-module $M$ has a {\em good filtration} 
(resp. {\em good $(p,r)$-filtration}) if and only if $M$ has a filtration with factors of the form $\nabla(\mu)$ (resp. $\nabla^{(p,r)}(\mu)$) for suitable $\mu\in X^+$.  
In the case when $r=1$, good $(p,1)$-filtrations are often referred to as good $p$-filtrations. 

For each $\lambda \in X_J^+$ there is a simple $L_J$-module $L_J(\lambda)$, a standard/Weyl module $\Delta_J(\lambda)$, a costandard/induced module $\nabla_J(\lambda),$ and an indecomposable tilting module $T_J(\la)$.  Specifically, for $\la \in X_{J}^+,$
$\nabla_J(\la)= \ind_{B}^{P_J}(\la)\cong \ind_{B\cap L_J}^{L_J}(\la)$. Furthermore, for $\la\in X$ and $r \geq 1$, set 
$$\hZ'_{J,r}(\la) = \ind_{B_rT}^{(P_J)_rT}(\la)  \cong \ind_{(B_r \cap L_J)T}^{(L_J)_rT}(\la)$$ \
and $\hQ_{J,r}(\la)$ to be the $(L_J)_rT$-injective hull of the simple $(L_J)_rT$-module $\hL_{J,r}(\la)$. 
When $J = \Delta$, we simply denote $\hZ'_{J,r}(\la)$ by $\hZ'_r(\la)$.  

Let $r \ge 1$ and write $\lambda = \lambda_0 + p^r\lambda_1$, where $\lambda_0 \in X_r(T)$ and $\lambda_1 \in X_J^+$.  By $\nabla_J^{(p,r)}(\lambda)$ denote the module
$$\nabla_J^{(p,r)}(\lambda) = L_J(\lambda_0) \otimes \nabla_J(\lambda_1)^{(r)}.$$
An $L_J$-module with a filtration whose factors have this form is said to have a good $(p,r)$-filtration.
In \cite[Proposition II.2.11 and II.5.21]{rags}, it is observed that one has the following facts with  $\la, \mu \in X^+$: 
\begin{itemize}
\item[(i)] $\nabla_J(\la)= \bigoplus_{ \nu \in \mathbb{N}J} \nabla(\la)_{\la-\nu}$,
\item[(ii)] $L_J(\la)= \bigoplus_{ \nu \in \mathbb{N}J} L(\la)_{\la-\nu}$,
\item[(iii)] $[\nabla(\la) : L(\mu)] = [\nabla_J(\la): L_J(\mu)],$ whenever $\la- \mu \in \mathbb{N}J.$
\end{itemize} 
Furthermore, the arguments in \cite{Don83} can  be adapted to yield for $\la, \mu, \la', \mu' \in X$:
\begin{itemize}
\item[(iv)] $\hZ'_{J,r}(\la) = \bigoplus_{ \nu \in \mathbb{N}J} \hZ_r'(\la)_{\la-\nu}$,
\item[(v)] [$\hZ'_r(\la) : \hL_r(\mu)] = [\hZ'_{J,r}(\la): \hL_{J,r}(\mu)],$ whenever $\la- \mu \in \mathbb{N}J$,
\item[(vi)] If $\la -\mu  \in \mathbb{N}J$ and $\langle\la - \la', \beta^{\vee} \rangle = \langle\mu - \mu', \beta^{\vee} \rangle = 0$ for all $\beta \in J,$ then $$[\hZ'_{J,r}(\la): \hL_{J,r}(\mu)] =[\hZ'_{J,r}(\la'): \hL_{J,r}(\mu')].$$
\end{itemize}

The following proposition discusses the restriction of tensor products of $G$-modules with unique highest weights to  Levi subgroups, with an immediate application involving induced modules.

\begin{prop}\label{P:tensor}
Let $\lambda, \mu \in X^+$. 
\begin{itemize} 
\item[(a)]  If $Y(\la)$ and $Y(\mu)$ are $G$-modules with unique highest weights $\la$ and $\mu$, respectively,  then we have an equality of $L_J$-modules
\begin{align*}
 \left(\bigoplus_{\nu \in \mathbb{N}J} (Y(\la) )_{\lambda-\nu}\right) \otimes  \left(\bigoplus_{\nu \in \mathbb{N}J} (Y(\mu) )_{\mu-\nu}\right)
&= \bigoplus_{\nu \in \mathbb{N}J} \left(Y(\la) \otimes Y(\mu)\right)_{\lambda+ \mu-\nu}.
\end{align*}
\item[(b)] There exists an equality of $L_J$-modules
\begin{align*}
\nabla_J^{(p,r)}(\lambda) & = \bigoplus_{\nu \in \mathbb{N}J} \nabla^{(p,r)}(\lambda)_{\lambda-\nu}.
\end{align*}
\end{itemize} 
\end{prop}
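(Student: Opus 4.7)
The plan is to prove the statement by first establishing a weight-space identity at the level of $T$-graded vector spaces, and then observing that all the subspaces in question are automatically $L_J$-stable, so the equality upgrades to one of $L_J$-modules.

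For part (a), the central observation is the following elementary combinatorial fact: since $Y(\lambda)$ has unique highest weight $\lambda$, every weight $\eta$ of $Y(\lambda)$ satisfies $\lambda - \eta \in \mathbb{N}\Phi^+$, so if moreover $\lambda - \eta \in \mathbb{Z}J$, then writing $\lambda-\eta$ as a nonnegative $\mathbb{Z}$-combination of the simple roots forces all coefficients of simple roots in $\Delta\setminus J$ to vanish, giving $\lambda-\eta \in \mathbb{N}J$. This observation first shows that $\bigoplus_{\nu\in\mathbb{N}J}Y(\lambda)_{\lambda-\nu}$ is $L_J$-stable: $T$ preserves weight spaces, and if $U_\beta$ with $\beta\in\Phi_J$ sends a weight vector of weight $\lambda-\nu$ to a nonzero vector of weight $\lambda-\nu+k\beta$, then $\nu-k\beta$ is simultaneously in $\mathbb{N}\Phi^+$ and in $\mathbb{Z}J$, hence in $\mathbb{N}J$.

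For the tensor identity, I would expand
\[
(Y(\lambda)\otimes Y(\mu))_{\lambda+\mu-\nu} \;=\; \bigoplus_{\eta_1+\eta_2=\lambda+\mu-\nu} Y(\lambda)_{\eta_1}\otimes Y(\mu)_{\eta_2},
\]
write $\eta_1=\lambda-\nu_1$ and $\eta_2=\mu-\nu_2$ with $\nu_1,\nu_2\in\mathbb{N}\Phi^+$, and note that $\nu_1+\nu_2=\nu\in\mathbb{N}J$ combined with the nonnegativity of the simple-root coefficients of $\nu_1$ and $\nu_2$ forces $\nu_i\in\mathbb{N}J$ for both $i$. Summing over $\nu\in\mathbb{N}J$ gives the claimed vector-space identity; it is automatically an equality of $L_J$-modules because both sides are sums of weight spaces of $Y(\lambda)\otimes Y(\mu)$, which itself has unique highest weight $\lambda+\mu$ and hence has the analogous $L_J$-stability.

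For part (b), I would apply part (a) with $Y(\lambda_0)=L(\lambda_0)$ and $Y(p^r\lambda_1)=\nabla(\lambda_1)^{(r)}$, which have unique highest weights $\lambda_0$ and $p^r\lambda_1$, respectively. Fact (ii) of Section~\ref{S:reps} gives $\bigoplus_{\nu\in\mathbb{N}J}L(\lambda_0)_{\lambda_0-\nu}=L_J(\lambda_0)$, and a Frobenius-twisted version of fact (i)---noting that $\nabla(\lambda_1)^{(r)}_{p^r\lambda_1-\nu}\neq 0$ forces $\nu\in p^r\mathbb{N}\Phi^+$---gives $\bigoplus_{\nu\in\mathbb{N}J}\nabla(\lambda_1)^{(r)}_{p^r\lambda_1-\nu}=\nabla_J(\lambda_1)^{(r)}$. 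Substituting into (a) yields $\nabla_J^{(p,r)}(\lambda)=L_J(\lambda_0)\otimes\nabla_J(\lambda_1)^{(r)}=\bigoplus_{\nu\in\mathbb{N}J}\nabla^{(p,r)}(\lambda)_{\lambda-\nu}$. The only mild obstacle is the bookkeeping lemma that $\nu_1,\nu_2\in\mathbb{N}\Phi^+$ with $\nu_1+\nu_2\in\mathbb{N}J$ forces each $\nu_i\in\mathbb{N}J$; this is elementary but underlies both the $L_J$-stability claim and the tensor-factor decomposition.
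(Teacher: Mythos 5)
Your proposal is correct and follows essentially the same approach as the paper: both rest on the same combinatorial observation that $\nu_1,\nu_2\in\mathbb{N}\Phi^+$ with $\nu_1+\nu_2\in\mathbb{N}J$ forces $\nu_1,\nu_2\in\mathbb{N}J$ (the paper phrases this as a contradiction via a negative coefficient on some $\alpha\in\Delta\setminus J$), and both derive (b) from (a) by the same specialization to $L(\lambda_0)$ and $\nabla(\lambda_1)^{(r)}$. The extra care you take to verify $L_J$-stability of the weight-space sums is a detail the paper implicitly absorbs into its citation of facts (i) and (ii), but it is welcome and correct.
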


\begin{proof} (a) Clearly we have that
\begin{align*}
 \left(\bigoplus_{\nu \in \mathbb{N}J} Y(\lambda)_{\lambda -\nu}\right) \otimes \left(\bigoplus_{\nu \in \mathbb{N}J} Y(\mu)_{\mu -\nu}\right)
& \subseteq \bigoplus_{\nu \in \mathbb{N}J} \left(Y(\lambda) \otimes Y(\mu)\right)_{\lambda+\mu-\nu}.
\end{align*}
On the other hand, if $Y(\lambda)_{\gamma}$ is a nonzero weight space and $\gamma$ is not of the form $\lambda - \nu$ for some $\nu \in \mathbb{N}J$, then there is some simple root $\alpha \in \Delta\backslash J$ such that when $\lambda-\gamma$ is expressed in the basis of simple roots, the coefficient $n_{\alpha}$ of $\alpha$ has $n_{\alpha}>0$.  For there to be a weight $\sigma$ of $Y(\mu)$ such that $\gamma+\sigma=\lambda+\mu-\nu$ with $\nu \in \mathbb{N}J$, we need that
$$\lambda+\mu - (\gamma+\sigma) \in \mathbb{N}J$$
or equivalently that
$$(\lambda - \gamma) +(\mu - \sigma) \in \mathbb{N}J.$$
Thus we have that in writing $\mu - \sigma$ in the basis $\Delta$, the coefficient of $\alpha$ is $-n_{\alpha}$, and $-n_{\alpha}<0$.  Since $\mu \ge \sigma$, this cannot happen.  A similar argument, reversing the roles of $\lambda$ and $\mu$, shows then that we have an equality
\begin{align*}
 \left(\bigoplus_{\nu \in \mathbb{N}J} (Y(\la) )_{\lambda-\nu}\right) \otimes  \left(\bigoplus_{\nu \in \mathbb{N}J} (Y(\mu) )_{\mu-\nu}\right)
&= \bigoplus_{\nu \in \mathbb{N}J} \left(Y(\la) \otimes Y(\mu)\right)_{\lambda+ \mu-\nu}.
\end{align*}

(b) With (i) and (ii), applying part (a) to $Y(\lambda_0)=L(\lambda_0),$ with $\lambda_0 \in X_r,$ and $Y(p^r\lambda_1)= \nabla(\la_1)^{(r)},$ with $ \la_1 \in X^+,$ immediately yields the result. 

\end{proof}

\subsection{Jantzen's Question and the Tilting Module Conjecture} For a complete description of Jantzen's Question, Donkin's $(p,r)$-Filtration Conjecture, and 
Donkin's Tilting Module Conjecture, we refer the reader to \cite[Section 2.2]{BNPS20}. In 1980, Jantzen \cite{Jan80} asked the following question which lead to the formulation of the $(p,r)$-Filtration 
Conjecture. 

\begin{quest}\label{Q:JQ} For $\lambda\in X^{+}$, does $\nabla(\lambda)$ admit a good $(p,r)$-filtration? 
\end{quest} 

The Tilting Module Conjecture, introduced by Donkin in 1990 \cite{Don}, states that 
a projective indecomposable module for $G_r$ can be realized as an indecomposable tilting $G$-module. 
 
\begin{conj}\label{C:TMC} For all $\lambda\in X_{r}$, 
$$T((p^{r}-1)\rho+\lambda)|_{G_{r}T}=\widehat{Q}_{r}((p^{r}-1)\rho+w_{0}\lambda).$$
\end{conj}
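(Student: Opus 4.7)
The plan is to establish the isomorphism by proving two complementary facts about $T := T((p^r-1)\rho+\lambda)$: first, that $T|_{G_r}$ is $G_r$-injective; and second, that $T|_{G_rT}$ has simple $G_rT$-socle, necessarily $\hL_r((p^r-1)\rho+w_0\lambda)$. Together these will force $T|_{G_rT}$ to be the injective hull $\hQ_r((p^r-1)\rho+w_0\lambda)$, at which point the conjecture follows from a character comparison (carried out either inductively in $r$ or via standard tilting/PIM identities).

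For the first point I would invoke the Steinberg trick: any $G$-module of the form $\St_r \otimes V$ is $G_r$-projective, and by Pillen's result mentioned in the introduction, $T$ occurs as an indecomposable $G$-summand of such a tensor product. Projectivity over $G_r$ (equivalently injectivity, since $G_r$ is a Frobenius algebra) passes to direct summands, so $T|_{G_r}$ is $G_r$-injective. For the second point, the $\tau$-self-duality of $T$ together with the multiplicity-one property of the highest weight furnishes a unique lowest weight vector of weight $w_0((p^r-1)\rho+\lambda) = -(p^r-1)\rho+w_0\lambda$; this vector is annihilated by $U_r$ and generates a simple $G_rT$-submodule which must sit inside the socle. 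One then needs to verify that the $G_rT$-socle contains no further simples, which, combined with injectivity, forces $T|_{G_rT}$ to be the claimed $G_rT$-PIM.

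The main obstacle is exactly the socle-simplicity claim. Any extra simple summand $\hL_r(\mu)$ appearing in $\soc_{G_rT} T|_{G_rT}$ with $\mu \neq (p^r-1)\rho+w_0\lambda$ immediately yields a nontrivial $G_rT$-decomposition of $T$. Ruling such summands out requires detailed information about $\Ext^1_{G_r}$ between simple $G_r$-modules and, more critically, about the $G/G_r$-structure these $\Ext$ groups carry. As the introduction foreshadows in Section~\ref{S:main}, it is precisely the failure of such $\Ext$ groups to have a good tilting structure as $G/G_r$-modules that undermines any proof along these lines --- and in fact gives rise to the counterexamples announced in Theorem~\ref{T:main}.
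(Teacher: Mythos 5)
This statement is a \emph{conjecture}, and the paper contains no proof of it --- the entire purpose of the paper is to produce counterexamples, so there is nothing to compare your attempt against. Your proposal is in fact honest about this: you sketch the two-step strategy (first, $G_r$-injectivity of $T((p^r-1)\rho+\lambda)$ via the Steinberg-tensor/Pillen argument; second, simplicity of the $G_rT$-socle), correctly flag the socle step as the gap, and correctly identify that the obstruction is governed by the $G/G_r$-module structure on $\Ext^1_{G_r}$ between restricted simples. That diagnosis matches the paper's Section~3 and Section~7 very well: the authors locate their counterexamples by finding pairs $\lambda,\mu\in X_1$ for which $\Ext^1_{G_1}(L(\lambda),L(\mu))^{(-1)}$ fails to be a tilting $G$-module, and they conjecture (Section~7) that this ``(ET)'' condition may be equivalent to the TMC. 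Two small technical remarks on your sketch. First, the lowest weight of $T((p^r-1)\rho+\lambda)$ is $-(p^r-1)\rho+w_0\lambda$, and the simple $\hL_r((p^r-1)\rho+w_0\lambda)$ you want in the socle has lowest weight $-(p^r-1)\rho+\lambda$; these are not the same weight, so the statement that the lowest weight vector of $T$ ``generates a simple $G_rT$-submodule which must sit inside the socle'' needs more care (it is the $G_rT$-socle being simple that one wants, and identifying it requires more than locating one extremal vector). Second, since the conjecture is known to fail for $\Phi=\mathrm{G}_2,\mathrm{B}_n,\mathrm{C}_n,\mathrm{D}_n,\mathrm{E}_n,\mathrm{F}_4$ at the primes listed in the paper, no proof of the general statement can exist; any attempted proof must contain an error, and yours correctly locates exactly where that error would have to live.
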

\noindent 
An alternative and equivalent  formulation of the conjecture is that 
$$T(2(p^{r}-1)\rho+w_{0}\lambda)|_{G_{r}T}=\widehat{Q}_{r}(\lambda)$$
for all $\lambda\in X_{r}$. 

One direction of Donkin's $(p,r)$-Filtration Conjecture is equivalent to the following.

\begin{conj}\label{C:stla} For $\la \in X_r$, $\St_r\otimes L(\la)$ is a tilting module.
\end{conj}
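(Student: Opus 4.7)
The plan is to show that $\St_r \otimes L(\lambda)$ is $\tau$-self-dual and admits a good filtration; these two facts together will force it to be tilting. First, observe that both $\St_r = L((p^r-1)\rho)$ and $L(\lambda)$ are fixed by the Chevalley duality functor $M \mapsto {}^\tau M$, so $\St_r \otimes L(\lambda)$ is $\tau$-self-dual. Since ${}^\tau$ interchanges Weyl and induced modules, a $\tau$-self-dual module admits a good filtration if and only if it admits a Weyl filtration. Thus it suffices to produce a good filtration on $\St_r \otimes L(\lambda)$.

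To this end, I would proceed by induction on $\lambda$ in the dominance order on $X_r$. The base case $\lambda = 0$ is immediate since $\St_r \otimes L(0) = \St_r = T((p^r-1)\rho)$. For the inductive step, tensor the sequence $0 \to \rad \Delta(\lambda) \to \Delta(\lambda) \to L(\lambda) \to 0$ with $\St_r$ to obtain
$$0 \to \St_r \otimes \rad \Delta(\lambda) \to \St_r \otimes \Delta(\lambda) \to \St_r \otimes L(\lambda) \to 0.$$
By Pillen's theorem (as recalled in the introduction), $\St_r \otimes \Delta(\lambda)$ is tilting, and in particular has a good filtration. A standard d\'evissage then reduces the problem to showing that $\St_r \otimes L(\mu)$ has a good filtration for every composition factor $L(\mu)$ of $\rad \Delta(\lambda)$; such $\mu$ necessarily satisfy $\mu < \lambda$.

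When $\mu \in X_r$, the induction hypothesis applies directly. When $\mu \notin X_r$, write $\mu = \mu_0 + p^r\mu_1$ with $\mu_0 \in X_r$ and $0 \neq \mu_1 \in X^+$; Steinberg's tensor product theorem then gives $\St_r \otimes L(\mu) \cong (\St_r \otimes L(\mu_0)) \otimes L(\mu_1)^{(r)}$, and by induction $\St_r \otimes L(\mu_0)$ has a good filtration with sections $\nabla(\nu)$, so the original module has a filtration with sections $\nabla(\nu) \otimes L(\mu_1)^{(r)}$. The main obstacle is precisely here: such a section has a good filtration if and only if $L(\mu_1)$ admits a good filtration, which is essentially the Jantzen Question at a lower level of Frobenius twist. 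Hence the induction cannot be closed without essentially proving Donkin's full $(p,r)$-Filtration Conjecture, and whenever some $L(\mu_1)$ fails to have a good $p$-filtration the argument breaks down. This circular dependence is precisely the phenomenon that the counterexamples of the present paper expose, explaining why Conjecture~\ref{C:stla} is subtle and, as established later in this paper, fails in general.
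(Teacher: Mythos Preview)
The statement in question is labelled a \emph{Conjecture} in the paper, and the paper provides no proof of it; there is therefore nothing to compare your attempt against. The paper merely recalls Conjecture~\ref{C:stla} as one direction of Donkin's $(p,r)$-Filtration Conjecture and notes (via \cite{So}) that, together with an affirmative answer to Jantzen's Question, it implies the TMC. Your closing assertion that the conjecture ``fails in general, as established later in this paper'' is simply incorrect: the paper produces counterexamples to the TMC and to Jantzen's Question, but nowhere does it exhibit a failure of Conjecture~\ref{C:stla}. On the contrary, in Section~\ref{S:JQD4} the paper explicitly cites \cite[5.6]{BNPS19} for the fact that Conjecture~\ref{C:stla} \emph{holds} in type $\rm{D}_4$ at $p=2$, even though the TMC fails there.

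Beyond this, two steps in your sketch are faulty. First, Pillen's result \cite{P} does not assert that $\St_r \otimes \Delta(\lambda)$ is tilting; it identifies the highest-weight indecomposable summand of $\St_r \otimes L(\lambda)$ as $T((p^r-1)\rho+\lambda)$. The module $\St_r \otimes \Delta(\lambda)$ certainly has a Weyl filtration (Donkin--Mathieu), but asserting that it also has a good filtration is not automatic and is essentially what you are trying to prove, so this step is circular. Second, in the non-restricted case $\mu = \mu_0 + p^r\mu_1$ with $\mu_1 \neq 0$, there is no reason that $\mu_0 < \lambda$ in the dominance order, so you cannot invoke the induction hypothesis on $\St_r \otimes L(\mu_0)$; and the claimed equivalence ``$\nabla(\nu)\otimes L(\mu_1)^{(r)}$ has a good filtration if and only if $L(\mu_1)$ does'' is neither proved nor an instance of Jantzen's Question.
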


In \cite{So}, it was shown that an affirmative answer to Question~\ref{Q:JQ} and Conjecture~\ref{C:stla} implies that Conjecture~\ref{C:TMC} holds.

\subsection{Jantzen's Question and Levi Subgroups}

The following result demonstrates that JQ is compatible with restricting to Levi subgroups. This theorem will be used later to produce counterexamples to JQ by analyzing examples in low rank. 

\begin{theorem}\label{T:JQLevi}
Let $L_J$ be a Levi subgroup of $G$, and let $\lambda \in X^+$.  If $\nabla(\lambda)$ has a good $(p,r)$-filtration as a $G$-module, then $\nabla_J(\lambda)$ has a good $(p,r)$-filtration as an $L_J$-module.
Moreover, one obtains that
$[\nabla(\la) : \nabla^{(p,r)}(\mu)] = [\nabla_J(\la): \nabla_J^{(p,r)}(\mu)],$
whenever $\la- \mu \in \mathbb{N}J.$
\end{theorem}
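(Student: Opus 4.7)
The plan is to construct an exact functor that converts a good $(p,r)$-filtration of $\nabla(\lambda)$ as a $G$-module into a good $(p,r)$-filtration of $\nabla_J(\lambda)$ as an $L_J$-module, and then read off the multiplicity formula by counting factors. For any $G$-module $M$ whose $T$-weights are all $\leq \lambda$, I would define the ``$J$-coset truncation at $\lambda$''
$$T_\lambda^J(M) := \bigoplus_{\nu \in \mathbb{N}J} M_{\lambda - \nu}.$$
Since the roots of $L_J$ lie in $\mathbb{Z}J$, the larger sum $\bigoplus_{\nu \in \mathbb{Z}J} M_{\lambda - \nu}$ is an $L_J$-submodule of $M|_{L_J}$; for any nonzero weight space $M_\mu$ in this sum the hypothesis $\mu \leq \lambda$ forces $\lambda - \mu \in \mathbb{N}\Phi^+ \cap \mathbb{Z}J = \mathbb{N}J$, so the two sums agree. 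Hence $T_\lambda^J$ is a $k$-linear exact functor to $L_J$-modules, and by fact (i) of Section 2.2 it satisfies $T_\lambda^J(\nabla(\lambda)) = \nabla_J(\lambda)$.

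Next, applying $T_\lambda^J$ to a good $(p,r)$-filtration $0 = M_0 \subset \cdots \subset M_n = \nabla(\lambda)$ with $M_i/M_{i-1} \cong \nabla^{(p,r)}(\mu_i)$ will produce a filtration of $\nabla_J(\lambda)$ whose successive quotients are the modules $T_\lambda^J(\nabla^{(p,r)}(\mu_i))$. The central computation is a dichotomy on each factor. When $\lambda - \mu_i \in \mathbb{N}J$, setting $\sigma := \lambda - \mu_i$ and reindexing $\nu = \tau + \sigma$, Proposition 2.2.1(b) identifies $T_\lambda^J(\nabla^{(p,r)}(\mu_i))$ with $\nabla_J^{(p,r)}(\mu_i)$. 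When $\lambda - \mu_i \notin \mathbb{N}J$, the difference $\lambda - \mu_i \in \mathbb{N}\Phi^+$ must have a strictly positive coefficient on some simple root $\alpha \in \Delta \setminus J$; since every weight $\tau$ of $\nabla^{(p,r)}(\mu_i)$ satisfies $\mu_i - \tau \in \mathbb{N}\Phi^+$, the decomposition $\lambda - \tau = (\lambda - \mu_i) + (\mu_i - \tau)$ retains that positive $\alpha$-coefficient, forcing $\lambda - \tau \notin \mathbb{N}J$ and therefore $T_\lambda^J(\nabla^{(p,r)}(\mu_i)) = 0$.

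Discarding the zero quotients then exhibits a good $(p,r)$-filtration of $\nabla_J(\lambda)$ containing exactly one factor $\nabla_J^{(p,r)}(\mu_i)$ for each $i$ with $\lambda - \mu_i \in \mathbb{N}J$. Grouping these factors by $\mu$ yields both the existence claim and the multiplicity identity when $\lambda - \mu \in \mathbb{N}J$. The only substantive obstacle will be the vanishing statement in the second case of the dichotomy; the remaining steps reduce to formal manipulation of weight-space decompositions, the exactness of $T_\lambda^J$, and the facts collected in Section 2.2.
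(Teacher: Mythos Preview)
Your proposal is correct and follows essentially the same approach as the paper: both arguments take the direct sum of $(\lambda-\mathbb{N}J)$-weight spaces (you package this as an exact functor $T_\lambda^J$, the paper works with the corresponding $L_J$-projection $\pi$), push a good $(p,r)$-filtration through it, and use the same dichotomy on $\lambda-\mu_i$ to identify each resulting factor as either $\nabla_J^{(p,r)}(\mu_i)$ via Proposition~\ref{P:tensor}(b) or zero. Your treatment is slightly cleaner in making exactness explicit and in spelling out the vanishing case via the positive $\alpha$-coefficient argument, but there is no substantive difference.
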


\begin{proof}
As an $L_J$-module, $\nabla_J(\lambda)$ is a direct summand of $\nabla(\lambda)$ and, from \ref{S:reps}(i), is given explicitly as the sum of weight spaces
$$\bigoplus_{\nu \in \mathbb{N}J} \nabla(\lambda)_{\lambda-\nu}.$$  In particular, there is a projection map $\pi:\nabla(\lambda) \rightarrow \nabla_J(\lambda)$, which is a homomorphism of $L_J$-modules.  Additionally, it follows that on each $T$-weight space, $\pi$ is the zero map if the weight is not of the form $\lambda-\nu$ for $\nu \in \mathbb{N}J$, otherwise $\pi$ is an isomorphism onto the corresponding weight space in the image.

If $\nabla(\lambda)$ has a good $(p,r)$-filtration, then there is a filtration of $G$-submodules
$$\{0\}=F_0 \subseteq F_1 \subseteq \cdots \subseteq F_m = \nabla(\lambda)$$
and dominant weights $\mu_1,\ldots,\mu_m$ such that each $F_i/F_{i-1} \cong \nabla^{(p,r)}(\mu_i)$.  We can apply $\pi$ to this filtration, obtaining a filtration on $\nabla_J(\lambda)$:
$$\{0\}=\pi(F_0) \subseteq \pi(F_1) \subseteq \cdots \subseteq \pi(F_m) = \nabla_J(\lambda).$$
For each $i$, we have (by restricting $\pi$ and composing with a quotient map) an $L_J$-module homomorphism
$$\pi_i: F_i \rightarrow \pi(F_i)/\pi(F_{i-1}),$$
and clearly $F_{i-1}$ is contained in the kernel of $\pi_i$.  Thus we obtain a homomorphism of $L_J$-modules
$$\nabla^{(p,r)}(\mu_i) \rightarrow \pi(F_i)/\pi(F_{i-1}).$$
As noted above, this homomorphism is nonzero if and only if $\nabla^{(p,r)}(\mu_i)$ has nonzero weight vectors of the form $\lambda-\nu$ for $\nu \in \mathbb{N}J$.  Since $\lambda \ge \mu_i$, it follows (by an argument as the proof of Proposition \ref{P:tensor}) that it is necessary (and clearly sufficient) that $\la - \mu_i \in \mathbb{N}J$.

Thus we have that
$$\pi(F_i)/\pi(F_{i-1}) \cong \nabla_J^{(p,r)}(\mu_i)$$
if $\lambda-\mu_i \in \mathbb{N}J$, and
$$\pi(F_i)/\pi(F_{i-1}) = \{0\}$$
otherwise, so that $\nabla_J(\lambda)$ has a good $(p,r)$-filtration.
\end{proof}

\subsection{Injective Hulls and Levi Subgroups} 
 It was first observed by Donkin \cite{Don} that the restriction of a projective indecomposable $G_{r}T$ module to $(L_{J})_{r}T$ yields  a  decomposition  that is similar to those listed in  Section \ref{S:reps}.  As a proof was only outlined in \cite{Don}, for the reader's convenience, 
we include a self-contained proof that will also be used in \cite{BNPS23}.

\begin{theorem}\label{T:QLevi}  \cite[Proposition 2.7]{Don} Let $L_J$ be a Levi subgroup of G and  $\la \in X_r.$ Then we have an equality of $(L_J)_rT$-modules.
$$\hQ_{J,r}((p^r-1)\rho + w_{J,0}\la) = \bigoplus_{\nu \in \mathbb{N}J} \widehat{Q}_r((p^r-1)\rho+w_0 \la)_{(p^r-1)\rho+\la-\nu}.$$
\end{theorem}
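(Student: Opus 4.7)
Set $\eta = (p^r-1)\rho+w_0\la$ and $\eta' = (p^r-1)\rho+w_{J,0}\la$, and define
$$M := \bigoplus_{\nu \in \mathbb{N}J}\hQ_r(\eta)_{(p^r-1)\rho+\la-\nu}.$$
The plan is to show that $M$ is an $(L_J)_rT$-injective module whose character matches that of $\hQ_{J,r}(\eta')$, so the two are forced to be isomorphic.

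First I would verify that $M$ is an $(L_J)_rT$-direct summand of $\hQ_r(\eta)|_{(L_J)_rT}$. Since $(p^r-1)\rho+\la$ is the unique highest weight of $\hQ_r(\eta)$, every $T$-weight of $\hQ_r(\eta)$ has the form $(p^r-1)\rho+\la-\xi$ with $\xi\in\mathbb{N}\Delta$, and the weights occurring in $M$ are precisely those with $\xi\in\mathbb{N}J$ (equivalently, those whose $(\Delta\setminus J)$-coefficients vanish). The distribution algebra $\Dist{(L_J)_rT}$ is generated by $T$ and the root operators $e_\alpha, f_\alpha$ for $\alpha\in\Phi_J$, and such operators shift $T$-weights by elements of $\mathbb{Z}J$, hence preserve the $(\Delta\setminus J)$-coefficients of $\xi$. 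Consequently both $M$ and its weight-space complement are $(L_J)_rT$-stable, yielding $\hQ_r(\eta)=M\oplus M'$ as $(L_J)_rT$-modules.

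Next I would argue that $M$ is $(L_J)_rT$-injective. The triangular factorization $G_r=(U_J^-)_r(L_J)_r(U_J)_r$ implies that $\Dist{G_rT}$ is free as a right $\Dist{(L_J)_rT}$-module; consequently restriction from $G_rT$-modules to $(L_J)_rT$-modules sends injectives to injectives. Applied to the $G_rT$-injective $\hQ_r(\eta)$, the summand $M$ inherits $(L_J)_rT$-injectivity.

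Finally I would match characters with $\hQ_{J,r}(\eta')$. My plan is to apply the exact ``$J$-weight piece anchored at $(p^r-1)\rho+\la$'' functor to a good $\hZ'_r$-filtration of $\hQ_r(\eta)$. A short change-of-variables argument using \ref{S:reps}(iv) shows that each filtration factor $\hZ'_r(\mu)$ with $(p^r-1)\rho+\la-\mu\in\mathbb{N}J$ contributes a copy of $\hZ'_{J,r}(\mu)$, while every other factor contributes zero. This produces a good $\hZ'_{J,r}$-filtration of $M$ whose multiplicities are the $G_rT$-filtration multiplicities $(\hQ_r(\eta):\hZ'_r(\mu))=[\hZ'_r(\mu):\hL_r(\eta)]$, via Donkin--Humphreys reciprocity combined with $\tau$-duality. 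Comparing to the $(L_J)_rT$-analog $(\hQ_{J,r}(\eta'):\hZ'_{J,r}(\mu))=[\hZ'_{J,r}(\mu):\hL_{J,r}(\eta')]$, and invoking the identities \ref{S:reps}(v)--(vi) to pass from $\hL_r(\eta)$ to $\hL_{J,r}(\eta')$ in the composition multiplicities, one obtains $\operatorname{ch}(M)=\operatorname{ch}(\hQ_{J,r}(\eta'))$. Since both modules are $(L_J)_rT$-injective and $\hQ_{J,r}(\eta')$ is indecomposable with simple socle, this character equality forces $M\cong\hQ_{J,r}(\eta')$. The hardest step is precisely this decomposition-number identification: the shift $\eta-\eta'=w_0\la-w_{J,0}\la$ is in general neither $J$-orthogonal nor in $p^r X$, so the translation-invariance in \ref{S:reps}(vi) must be applied delicately in tandem with the $J$-restriction identity \ref{S:reps}(v) to line up $[\hZ'_r(\mu):\hL_r(\eta)]$ with $[\hZ'_{J,r}(\mu):\hL_{J,r}(\eta')]$ across the relevant range of $\mu$.
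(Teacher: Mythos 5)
Your proposal follows the same route as the paper: define $M$ as the $\mathbb{N}J$-weight-space piece of $\hQ_r(\eta)$ anchored at the highest weight, observe that it is an injective $(L_J)_rT$-summand, and reduce the claim to a character identity that is proved by pushing a $\hZ'_r$-filtration of $\hQ_r(\eta)$ through the ``$J$-slice'' and comparing multiplicities via reciprocity. The small verifications you add (stability of $M$ under $\Dist{(L_J)_rT}$ via weight bookkeeping, and injectivity via freeness of $\Dist{G_rT}$ over $\Dist{(L_J)_rT}$) are fine and match what the paper takes for granted.

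What you correctly flag as the hardest step is, however, left incomplete in your write-up, and it is worth recording how the paper closes it. You observe that $\eta - \eta' = w_0\la - w_{J,0}\la$ is in general neither $J$-orthogonal nor in $p^rX$, so one cannot apply \ref{S:reps}(vi) directly to pass from $[\hZ'_r(\mu):\hL_r(\eta)]$ to $[\hZ'_{J,r}(\mu):\hL_{J,r}(\eta')]$. The paper avoids this shift altogether: on the $\hQ_{J,r}$ side it first applies $(L_J)_rT$-duality (\cite[II.9.2]{rags}) to rewrite
$$[\hZ'_{J,r}((p^r-1)\rho+\mu):L_J((p^r-1)\rho+w_{J,0}\la)] = [\hZ'_{J,r}(-w_{J,0}(p^r-1)\rho-\mu):L_J(-w_{J,0}(p^r-1)\rho-\la)],$$
while on the $M$ side one lands (via \ref{S:reps}(v)) on $[\hZ'_{J,r}((p^r-1)\rho-\mu):L_J((p^r-1)\rho-\la)]$. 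The two pairs of arguments now differ by the same weight $(p^r-1)\rho + w_{J,0}(p^r-1)\rho$, which \emph{is} $J$-orthogonal (each component $\langle(p^r-1)\rho + w_{J,0}(p^r-1)\rho,\beta^\vee\rangle$ vanishes for $\beta\in J$), so \ref{S:reps}(vi) applies directly. In short, the delicate translation is handled by choosing the duality so that the required shift is $J$-orthogonal rather than trying to compare $\eta$ and $\eta'$ head-on. A second detail you gloss over: the $\hZ'_r$-factors in the filtration come labeled by $(p^r-1)\rho+w\sigma$ with $w\in W$, not $W_J$, and one needs Lemma \ref{L:wmu} to see that the surviving factors (those with $(p^r-1)\rho+\la-(p^r-1)\rho-w\sigma\in\mathbb{N}J$) have $w\sigma = w_J\sigma$ for some $w_J\in W_J$, which is what makes them recognizable as $\hZ'_{J,r}$-modules and lets $W_J$-invariance and Brauer--Humphreys reciprocity close the count. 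With these two points supplied, your outline becomes a complete proof.
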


\begin{proof} To prove the theorem, we define the $(L_J)_rT$-summand $M$ of $\widehat{Q}_r((p^r-1)\rho+w_0 \la)$ via 
$$M=\bigoplus_{\nu \in \mathbb{N}J} \widehat{Q}_r((p^r-1)\rho+w_0 \la)_{(p^r-1)\rho+\la-\nu}.$$
The restriction of $\widehat{Q}_r((p^r-1)\rho+w_0 \la)$ to $(L_J)_rT$ is still injective and projective. Therefore, $M$ is also injective and projective as an $(L_J)_rT$-module. 
It suffices to show that $$\text{ch } M = \text{ch } \hQ_{J,r}((p^r-1)\rho + w_{J,0}\la).$$  Verification of this latter claim will be carried out over the next several steps. 
\vskip .25cm 
(1) {\em Formal Character of $\hQ_{J,r}((p^r-1)\rho +w_{J,0}\la)$:}\label{S:charQJr}
The $(L_J)_rT$-module 
$$\nabla_J((p^r-1)\rho)= L_J((p^r-1) \rho)=\hZ'_{J,r}((p^r-1) \rho)$$ 
is irreducible, injective and projective. In addition, 
\begin{align*}
&\Hom_{(L_J)_rT}(L_J((p^r-1)\rho+w_{J,0}\la) ,\hZ'_{J,r}((p^r-1) \rho)\otimes L_J(\la))\\
&= \Hom_{(L_J)_rT}(L_J((p^r-1)\rho+w_{J,0}\la) \otimes L_J(-w_{J,0}\la), L_J((p^r-1) \rho)) \\
&= k.
\end{align*}
Therefore, $\hQ_{J,r}((p^r-1)\rho +w_{J,0}\la)$ is an $(L_J)_rT$-summand of $\hZ'_{J,r}((p^r-1) \rho)\otimes L_J(\la).$ The latter has a $\hZ'_{J,r}$-filtration with factors of the form $\hZ'_{J,r}((p^r-1) \rho+\ga),$ with $\ga$ being a weight of $L_J(\la)$. The weights of $L_J(\la)$ are all of the form $w\mu$ with $\mu \in X_J^+,$ $w \in W_J,$ and $\la-\mu \in \mathbb{N}J.$ Note that $\la \in X_r,$ $\mu \in X_J^+,$  and $\la-\mu \in \mathbb{N}J$ implies that $\mu \in X^+.$
Hence, $\hQ_{J,r}((p^r-1)\rho +w_{J,0}\la)$ has an $(L_J)_rT$-filtration with factors of the form $\hZ_{J,r}'((p^r-1)\rho + w \mu)$ with  $\mu \in   \{\ga \in X^+ | \la - \ga \in \mathbb{N}J\}$ and $w \in W_J.$ 
By making use of Brauer-Humphreys reciprocity \cite[II.11.4]{rags} and $W_J$-invariance \cite[II.9.16(5)]{rags},  one obtains 
\begin{eqnarray*}
&\text{ch}& \hQ_{J,r}((p^r-1)\rho +w_{J,0}\la)\\
&=&  \sum_{\{\mu \in X^+ | \la - \mu \in \mathbb{N}J\}} \frac{1}{|\text{Stab}_{W_J}(\mu)|} \sum_{w \in W_J} [\hQ_{J,r}((p^r-1)\rho + w_{J,0}\la): \hZ_{J,r}'((p^r-1)\rho +w\mu)] \\
&&\cdot \; \text{ch } \hZ_{J,r}'((p^r-1)\rho +w\mu)\\
&=&  \sum_{\{\mu \in X^+ | \la - \mu \in \mathbb{N}J\}}\frac{1}{|\text{Stab}_{W_J}(\mu)|} \sum_{w \in W_J} [ \hZ_{J,r}'((p^r-1)\rho +w\mu): L_J((p^r-1)\rho + w_{J,0}\la)]\\
&& \cdot \; \text{ch } \hZ_{J,r}'((p^r-1)\rho +w\mu)\\
&=&  \sum_{\{\mu \in X^+ | \la - \mu \in \mathbb{N}J\}}  \frac{1}{|\text{Stab}_{W_J}(\mu)|} \cdot [ \hZ_{J,r}'((p^r-1)\rho +\mu): L_J((p^r-1)\rho + w_{J,0}\la)]\\
&& \cdot \sum_{w \in W_J}\text{ch } \hZ_{J,r}'((p^r-1)\rho +w\mu).\\
\end{eqnarray*}

\noindent 
Using $(L_J)_rT$-duality \cite[II.9.2]{rags}, we obtain
$\displaystyle{ [ \hZ_{J,r}'((p^r-1)\rho +\mu): L_J((p^r-1)\rho + w_{J,0}\la)]}$
 \begin{eqnarray*}
 &=&  [ \hZ_{J,r}'(2(p^r-1)\rho_J-(p^r-1)\rho -\mu): L_J(-w_{J,0}(p^r-1)\rho -\la)]\\
 &=& [ \hZ_{J,r}'(-w_{J,0}(p^r-1)\rho -\mu): L_J(-w_{J,0}(p^r-1)\rho -\la)].
 \end{eqnarray*}
 
 \noindent 
 Hence,
 \begin{eqnarray*}
&\text{ch}& \hQ_{J,r}((p^r-1)\rho +w_{J,0}\la)\\
&= & \sum_{\{\mu \in X^+ | \la - \mu \in \mathbb{N}J\}}  \frac{1}{|\text{Stab}_{W_J}(\mu)|} \cdot [ \hZ_{J,r}'(-w_{J,0}(p^r-1)\rho -\mu): L_J(-w_{J,0}(p^r-1)\rho -\la)]\\
&& \cdot \sum_{w \in W_J}\text{ch } \hZ_{J,r}'((p^r-1)\rho +w\mu).\\
\end{eqnarray*}
Observe also that the highest weight of $\hQ_{J,r}((p^r-1)\rho +w_{J,0}\la)$ is $(p^r-1)\rho + \la.$
\vskip .25cm 
(2) {\em Formal character of $\widehat{Q}_r((p^r-1)\rho+w_0 \la)$:}\label{S:charQr}
If one applies the calculation of step (1) to the case $J=\Delta,$ one obtains a filtration of $\widehat{Q}_r((p^r-1)\rho+w_0 \la)$ with factors of the form $\hZ_r'((p^r-1)\rho +w\mu)$ where $\mu \in X^+$ and $w \in W.$ The equivalent of the last equation in step (1) is 

\begin{eqnarray*}
\text{ch } \widehat{Q}_r((p^r-1)\rho+w_0 \la)
&=&  \sum_{\{\mu \in X^+| \mu \leq \la\} } \frac{1}{|\text{Stab}_{W}(\mu)|}
 \cdot [ \hZ_r'((p^r-1)\rho -\mu): L((p^r-1)\rho -\la)]\\
  &&\cdot \sum_{w \in W}\text{ch } \hZ_r'((p^r-1)\rho +w\mu).
\end{eqnarray*}
\vskip .25cm 
(3) {\em Formal Character of M:}\label{S:charM}
The aforementioned $\hZ_r'$-filtration of $\widehat{Q}_r((p^r-1)\rho+w_0 \la)$ induces a filtration on the $(L_J)_rT$-summand $M$.  
Analogous to the argument in the proof of Theorem \ref{T:JQLevi}, the resulting filtration will have factors of the form $\hZ'_{J,r}(\s)$.
One observes that  a factor of the form $\hZ_r'((p^r-1)\rho + w \mu)$ can only contribute to the filtration of $M$ if $$(p^r-1)\rho +\la - (p^r-1)\rho -w \mu= (\la - \mu)+(\mu - w\mu) \in \mathbb{N}J.$$
Since both $\la - \mu \geq 0$ and $\mu - w \mu \geq 0$, it follows that $\la - \mu \in \mathbb{N}J$ and that $\mu - w \mu \in \mathbb{N}J.$  From Lemma \ref{L:wmu}, the latter implies that $w\mu = w_J\mu$ for some $w_J \in W_J$. If these conditions are satisfied, then it follows from \ref{S:reps}(iv) that the  resulting $(L_J)_rT$-factor of $M$ is  isomorphic to $\bigoplus_{ \nu \in \mathbb{N}J} \hZ'_{r}((p^r-1)\rho+w_J\mu)_{(p^r-1)\rho+\la - \nu}
=\hZ'_{J,r}((p^r-1)\rho+w_J\mu).$ 
One obtains from the above discussion and step (2) that 
\begin{eqnarray*}
\text{ch } M
 &=& \sum_{\{\mu \in X^+ | \la - \mu \in \mathbb{N}J\}} \frac{1}{|\text{Stab}_{W_J}(\mu)|} \cdot [\hZ_r'((p^r-1)\rho -\mu):L((p^r-1)\rho -\la)] \\
 &&\cdot  \sum_{w \in W_J} \text{ch } \hZ_{J,r}'((p^r-1)\rho +w\mu).
\end{eqnarray*}
Since $(p^r-1)\rho -\mu-((p^r-1)\rho -\la) =\la - \mu \in \mathbb{N}J$, 
by \ref{S:reps}(v), it follows that
$$ [\hZ_r'((p^r-1)\rho -\mu):L((p^r-1)\rho -\la)] =[\hZ_{J,r}'((p^r-1)\rho -\mu):L_{J}((p^r-1)\rho -\la)]$$
and
\begin{eqnarray*}
\text{ch } M
 &=& \sum_{\{\mu \in X^+ | \la - \mu \in \mathbb{N}J\}} \frac{1}{|\text{Stab}_{W_J}(\mu)|} \cdot [\hZ_{J,r}'((p^r-1)\rho -\mu):L_J((p^r-1)\rho -\la)] \\
 &&\cdot  \sum_{w \in W_J} \text{ch } \hZ_{J,r}'((p^r-1)\rho +w\mu).\\
\end{eqnarray*}
\vskip .25cm 
(4) {\em Comparison and Completion of Proof:}\label{S:completion}
We claim that for $\la \in X_r$ and $\mu \in X^+$ with $\la - \mu \in \mathbb{N}J$  $$[\hZ_{J,r}'((p^r-1)\rho -\mu):L_J((p^r-1)\rho -\la)]= [\hZ_{J,r}'(-w_{J,0}(p^r-1)\rho -\mu): L_J(-w_{J,0}(p^r-1)\rho -\la)].$$ Note the pairs of weights on each side of the equation both differ by $\la - \mu.$ 
In addition, $\langle (p^r-1)\rho+ w_{J,0}(p^r-1)\rho, \beta^{\vee} \rangle =0,$ for all $\beta \in J.$ The claim follows now from \ref{S:reps}(vi).
 Therefore, comparing the final equations of steps (1) and (3) yields the assertion of Theorem \ref{T:QLevi}.

\end{proof} 

\subsection{The Tilting Module Conjecture and Levi Subgroups} Using Theorem~\ref{T:QLevi},
we can show that the validity of the TMC is compatible with restriction to 
Levi subgroups. 

\begin{theorem}\label{T:TMCLevi} Let $L_J$ be a Levi subgroup of G. If the Tilting Module Conjecture holds for $G$, then it also holds for $L_J.$ 
\end{theorem}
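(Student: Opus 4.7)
The plan is to combine the assumed TMC for $G$ with \cref{T:QLevi} to identify, for each $\lambda \in X_r$, the indecomposable $(L_J)_rT$-module $\widehat{Q}_{J,r}((p^r-1)\rho + w_{J,0}\lambda)$ as the restriction to $(L_J)_rT$ of the indecomposable tilting $L_J$-module $T_J((p^r-1)\rho+\lambda)$. Fix $\lambda \in X_r$. The assumed TMC for $G$ gives
\[
T((p^r-1)\rho+\lambda)|_{G_rT} \cong \widehat{Q}_r((p^r-1)\rho + w_0\lambda).
\]
Restricting further to $(L_J)_rT$ and extracting the direct summand formed by the weight subspaces at weights of the form $(p^r-1)\rho+\lambda-\nu$ with $\nu \in \mathbb{N}J$, \cref{T:QLevi} identifies the result as $\widehat{Q}_{J,r}((p^r-1)\rho+w_{J,0}\lambda)$.

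I would next observe that these same weight subspaces form an $L_J$-direct summand $M$ of $T((p^r-1)\rho+\lambda)|_{L_J}$. This is the argument already used in the proof of \cref{P:tensor}: both the chosen weight set and its complement are $L_J$-stable, because $\alpha^{\vee} \in \mathbb{Z}\{\alpha_i^{\vee} : \alpha_i \in J\}$ for every $\alpha \in \Phi_J$. Since the restriction of a tilting $G$-module to $L_J$ is a tilting $L_J$-module (a standard consequence of Donkin's characterization via $\nabla$- and $\Delta$-filtrations, which restrict well to $L_J$), $M$ is a tilting $L_J$-module. Its highest weight is $(p^r-1)\rho+\lambda$ with multiplicity one, so $M \cong T_J((p^r-1)\rho+\lambda) \oplus M'$ for some (possibly zero) tilting $L_J$-module $M'$.

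Restricting this decomposition to $(L_J)_rT$ and comparing with the first step yields
\[
\widehat{Q}_{J,r}((p^r-1)\rho+w_{J,0}\lambda) \cong T_J((p^r-1)\rho+\lambda)|_{(L_J)_rT} \oplus M'|_{(L_J)_rT}.
\]
Since the left-hand side is indecomposable as an $(L_J)_rT$-module and the first summand on the right is nonzero (it contains the $L_J$-highest weight vector), the second summand must vanish, which forces $M'=0$. This establishes the TMC for $L_J$ at every $\lambda \in X_r$. For a weight $\mu$ that is only $L_J$-restricted (i.e., $0 \le \langle \mu, \alpha^{\vee} \rangle < p^r$ for all $\alpha \in J$), choosing $\sigma = \sum_{\alpha_j \notin J} c_j \omega_j$ suitably makes $\mu+\sigma \in X_r$; since $\sigma$ vanishes on $J^{\vee}$ it is fixed by $w_{J,0}$, so tensoring the established identity for $\mu+\sigma$ with the one-dimensional $L_J$-module of central character $-\sigma$ transfers the TMC to $\mu$.

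The main obstacle is the last indecomposability step, ruling out an extra tilting summand $M'$ in $M$. This rests on two complementary facts: that $\widehat{Q}_{J,r}(\cdot)$ is indecomposable as an $(L_J)_rT$-module, and that any nonzero rational $L_J$-module has nonzero restriction to $(L_J)_rT$, so no nonzero $M'$ can escape detection. Once this is in hand, the remainder of the argument is weight-space bookkeeping that is already encoded in \cref{T:QLevi} and \cref{P:tensor}.
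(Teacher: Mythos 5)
Your proof is correct and follows essentially the paper's route: combine the assumed TMC for $G$ with \cref{T:QLevi} to identify the $L_J$-weight-space summand $M$ (which the paper calls $N$) with $\widehat{Q}_{J,r}((p^r-1)\rho+w_{J,0}\lambda)$, and use the indecomposability of the latter over $(L_J)_rT$ to conclude that it equals $T_J((p^r-1)\rho+\lambda)$. Your one variation — locating $T_J$ inside $M$ by citing Donkin's observation that restrictions of tilting modules to a Levi are tilting, rather than via Pillen's characterization of $T((p^r-1)\rho+\lambda)$ as the highest-weight summand of $L((p^r-1)\rho)\otimes L(\lambda)$ used in the paper's main proof — is precisely the alternative route the paper sanctions in the Remark immediately following the theorem, and the end-of-proof reduction to $\lambda\in X_r$ by a central twist matches the paper's as well.
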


\begin{proof} 
 Verifying the TMC for $L_J$ is equivalent to showing the following: given a weight $\la$ that is $p^r$-restricted on $L_J,$ the unique indecomposable $L_J$-summand  containing the  highest weight $(p^r-1)\rho_J + \la$ in the tensor product $L_J((p^r-1)\rho_J) \otimes L_J(\la)$  remains indecomposable as an $(L_J)_rT$-module.

Note that  $\langle (p^r-1)\rho -(p^r-1)\rho_J , \beta^{\vee} \rangle =0$ for all  $\beta \in J$ and 
 that, given any simple $p^r$-restricted $L_J$-module $V$,  one can always find $\la \in X_r$ such that the highest weights of $V$ and $L_J(\la)$ agree on all weight components corresponding to $J.$ 
 
 It is therefore sufficient to show that, for all $\la \in X_r,$
 the unique indecomposable $L_J$-summand  containing the  highest weight $(p^r-1)\rho + \la$ in the tensor product $L_J((p^r-1)\rho) \otimes L_J(\la)$  is indecomposable as an $(L_J)_rT$-module. We will denote this $L_J$-summand, which is tilting, by 
 $T_J((p^r-1)\rho + \la).$
 
 The indecomposable $G$-tilting module $T((p^r-1)\rho + \la)$ appears as the unique $G$-summand containing the weight $(p^r-1)\rho + \la$ in $L((p^r-1) \rho) \otimes L(\la).$ 
 From \ref{S:reps}(ii) and Lemma \ref{P:tensor},
 \begin{eqnarray*}
 L_J((p^r-1)\rho) \otimes L_J(\la)
 &=& \bigoplus_{\nu \in \mathbb{N}J}\left(L((p^r-1) \rho) \otimes L(\la)\right)_{(p^r-1)\rho+\la-\nu}.
 \end{eqnarray*}
  It follows that $T_J((p^r-1)\rho+ \la)$ appears as an $L_J$-summand of  $T((p^r-1)\rho + \la)$. More precisely, it is a summand of the $L_J$-module 
$N=\bigoplus_{\nu \in \mathbb{N}J} T((p^r-1)\rho + \la)_{(p^r-1)\rho+\la-\nu}.$

The TMC for $G$ implies that $T((p^r-1)\rho + \la)=\hQ_r((p^r-1)\rho+w_0 \la)$, as $G_rT$-modules.  We obtain from Theorem \ref{T:QLevi} that 
$$N= \bigoplus_{\nu \in \mathbb{N}J}\hQ_r((p^r-1)\rho+w_0 \la)_{(p^r-1)\rho+\la-\nu}= \hQ_{J,r}((p^r-1)\rho + w_{J,0}\la),$$
as $(L_J)_rT$-modules. Hence, $N$ and $T_J((p^r-1)\rho+ \la)$ are indecomposable as $(L_J)_rT$-modules.
\end{proof}

\begin{remark} 
Donkin observed in \cite[Proposition 1.5 (ii)]{Don} that indecomposable tilting modules behave nicely when restricted to Levi subgroups. More precisely, he showed that for any $\la \in X^+$
\begin{equation}\label{E:Donkin}
T_J( \la)=\bigoplus_{\nu \in \mathbb{N}J} T( \la)_{\la-\nu}.
\end{equation} 
Now Equation \ref{E:Donkin} together with Theorem \ref{T:QLevi} may also be used to  prove Theorem \ref{T:TMCLevi}.
In particular, in the preceding proof of Theorem \ref{T:TMCLevi}, it would immediately follow that $T_J((p^r-1)\rho+ \la)$ was in fact equal to the module $N$, rather than simply being a summand; a conclusion that is also reached at the end of the proof. 
 
\end{remark}

In subsequent sections, the contrapositives of Theorems \ref{T:JQLevi} and \ref{T:TMCLevi} will be used to obtain infinite families of counterexamples to JQ and the TMC from low rank counterexamples.  To conclude this section, we record the relationship between the validity of the TMC and of the character of $\nabla_{J}(\lambda)$ admitting the character of a module with a $p$-filtration. 

\begin{theorem} If $G$ satisfies the Tilting Module Conjecture, then $\nabla_{J}(\lambda)$ has the character of a module admitting a good $(p,r)$-filtration for all $\lambda\in X^{+}$, $J\subseteq \Delta$ and 
$r\geq 1$. 
\end{theorem}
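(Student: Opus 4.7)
The plan is to reduce to the case $J=\Delta$ using Theorem~\ref{T:TMCLevi} and then prove that case by extracting character-level consequences of the TMC. By Theorem~\ref{T:TMCLevi}, the hypothesis that $G$ satisfies TMC implies that every Levi subgroup $L_J$ also satisfies TMC. Consequently, if we can prove the $J=\Delta$ version of the current theorem for an arbitrary reductive group satisfying TMC, then applying that statement to $L_J$ in place of $G$ immediately gives that $\nabla_J(\lambda)=\nabla_{L_J}(\lambda)$ has the character of a module admitting a good $(p,r)$-filtration as an $L_J$-module. Hence the full statement reduces to the single assertion: if $G$ satisfies TMC, then for every $\lambda \in X^+$ and every $r\geq 1$, the character $\operatorname{ch}\nabla(\lambda)$ is that of a module admitting a good $(p,r)$-filtration.

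For the $J=\Delta$ case, observe that the characters $\{\operatorname{ch}\nabla^{(p,r)}(\mu)\}_{\mu\in X^+}$ are linearly independent, since distinct $\mu$ produce distinct highest weights. Thus there exist unique integers $a_\mu\in\mathbb{Z}$ with
\begin{equation*}
\operatorname{ch}\nabla(\lambda)=\sum_{\mu\in X^+} a_\mu\,\operatorname{ch}\nabla^{(p,r)}(\mu),
\end{equation*}
and the content of the theorem is that $a_\mu\geq 0$. To compute these coefficients, I would write $\mu=\mu_0+p^r\mu_1$ with $\mu_0\in X_r$ and $\mu_1\in X^+$, and then use Brauer--Humphreys reciprocity together with the Steinberg tensor product decomposition to express $a_\mu$ as a pairing of $\nabla(\lambda)$ with $\hat{Q}_r(\mu_0)$ (twisted by $p^r\mu_1$) in the $G_rT$-category. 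The TMC hypothesis, in its equivalent form $T(2(p^r-1)\rho+w_0\mu_0)|_{G_rT}=\hat{Q}_r(\mu_0)$, then rewrites each such pairing as a good-filtration multiplicity $[T(2(p^r-1)\rho+w_0\mu_0) : \nabla(\sigma)]$ for an appropriate $\sigma\in X^+$. Since tilting modules admit good filtrations with nonnegative multiplicities, this forces $a_\mu\geq 0$.

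The main obstacle will be pinning down the precise reciprocity formula that identifies $a_\mu$ with a visibly nonnegative tilting-module multiplicity. This requires careful bookkeeping: one must track the $p^r$-twist in $\nabla^{(p,r)}(\mu)=L(\mu_0)\otimes\nabla(\mu_1)^{(r)}$, apply the $G_rT$-duality relating $\hat{L}_r$ and $\hat{Z}'_r$ as exploited in the proof of Theorem~\ref{T:QLevi}, and handle the shift by $(p^r-1)\rho$ that appears when passing between the formulations of TMC. Once the identification of $a_\mu$ with a $\nabla$-multiplicity in a tilting module is established, the conclusion is immediate; all of the difficulty is in constructing this identification cleanly, rather than in invoking TMC itself.
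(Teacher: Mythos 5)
Your reduction to the case $J=\Delta$ via Theorem~\ref{T:TMCLevi} is valid and is a slightly different route than the paper, which instead deduces the statement for $\nabla_J(\lambda)$ from the $J=\Delta$ case using Proposition~\ref{P:tensor} to project the character identity onto the weights congruent to $\lambda$ modulo $\mathbb{N}J$. Both work; your route is arguably cleaner conceptually since it turns the Levi step into a pure application of an already-proved theorem.

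For the $J=\Delta$ core, your setup is the right one: the characters $\operatorname{ch}\nabla^{(p,r)}(\mu)$ are linearly independent, so the integers $a_\mu$ are well-defined and the content is $a_\mu \geq 0$. However, the part you flag as the ``main obstacle'' is in fact a genuine gap, and the identification you propose is not quite correct as stated. Writing $\mu=\mu_0+p^r\mu_1$, the coefficient $a_\mu$ is not a single good-filtration multiplicity $[T(2(p^r-1)\rho+w_0\mu_0):\nabla(\sigma)]$ of a tilting module. The correct identification, obtained from the character identity
\[
\operatorname{ch}M=\sum_{\sigma\in X_r}\operatorname{ch}L(\sigma)\cdot \operatorname{ch}\Hom_{G_r}(\widehat{Q}_r(\sigma),M)
\]
(valid for any finite-dimensional $G$-module $M$, proved by induction on composition length together with Steinberg's tensor product theorem), is that
\[
a_{\mu_0 + p^r\mu_1}=\bigl[\Hom_{G_r}(\widehat{Q}_r(\mu_0),\nabla(\lambda))^{(-r)}:\nabla(\mu_1)\bigr],
\]
the formal $\nabla$-coefficient of the untwisted Hom-space. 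The nonnegativity of these coefficients is precisely the assertion that $\Hom_{G_r}(\widehat{Q}_r(\sigma),\nabla(\lambda))^{(-r)}$ admits a good filtration, which is \cite[Theorem 9.2.3]{KN} under the TMC hypothesis. That theorem, or a proof of it, is the missing ingredient: without it you cannot conclude that the formal $\nabla$-coefficients of the Hom-space are nonnegative, since character decompositions in the $\nabla$-basis have no positivity in general. Your gesture toward Brauer--Humphreys reciprocity and the $G_rT$-pairing $\Hom_{G_rT}(\widehat{Q}_r(\mu_0)\otimes p^r\mu_1,\nabla(\lambda))$ actually computes the $G_rT$-composition multiplicity $[\nabla(\lambda):\widehat{L}_r(\mu_0+p^r\mu_1)]$, which is a different (and larger) quantity than $a_\mu$; conflating the two is where the proposed identification goes wrong.
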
 

\begin{proof} Let $r\geq 1$. First recall that if the TMC holds for $G$, then 
$\text{Hom}_{G_{r}}(\widehat{Q}_{r}(\sigma),\nabla(\lambda))$ has a good filtration for all $\sigma\in X_{r}$ and $\lambda\in X^{+}$ \cite[Theorem 9.2.3]{KN}. 

Next observe that  if $M$ is a finite-dimensional $G$-module, then 
$$\text{ch }M=\sum_{\sigma\in X_{r}}\text{ch }L(\sigma)\otimes \text{ch }\text{Hom}_{G_{r}}(\widehat{Q}_r(\sigma),M).$$ 
This can be proved via induction on the composition length of $M$ and using the fact that for $\lambda\in X^{+}$
with $L(\la) = L(\la_0) \otimes L(\la_1)^{(r)}$, $\la_0 \in X_r$ and $\la_1 \in X^+$, 
 the expression $\Hom_{G_r}(\widehat{Q}_r(\sigma), L(\la)) \cong \Hom_{G_r}(\widehat{Q}_r(\sigma), L(\la_0))\otimes L(\la)^{(r)}$ vanishes unless $\sigma = \la_0$, in which case it is $L(\la_1)^{(r)}.$ 

Therefore, by using these facts, it follows that $M=\nabla(\lambda)$ has the character of a module with a good $(p,r)$-filtration. 
The statement for $\nabla_{J}(\lambda)$ where $J\subseteq \Delta$ follows by Proposition~\ref{P:tensor}. 
\end{proof} 

The statement of the prior theorem for $J=\Delta$ was also observed by Kildetoft and conveyed to the third author via a private correspondence. 


\section{Using Extensions Between Simple Modules to Generate Counterexamples}\label{S:methods}

In this section, we show that the structure of the $\text{Ext}^{1}$ between two simple $G_{1}$-modules is a key ingredient to the validity of the TMC. In the process of our analysis 
we present several related methods for constructing counterexamples to the TMC.  

\subsection{First Method}\label{S:1st}  We begin by making the following observation. 

\begin{prop} \label{P:FirstConstruction}
Let $\la, \mu \in X_1$ with $\la \neq \mu$.  If the Tilting Module Conjecture holds, then $\Ext_{G_1}^1(L(\la), L(\mu)) $ is a $G$-submodule of  $\Hom_{G_1}(Q_1(\la), Q_1(\mu)).$ 
\end{prop}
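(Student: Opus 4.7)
The plan is to establish the inclusion by a standard dimension-shifting argument, using the projective cover of $L(\la)$ and the injective hull of $L(\mu)$, with the TMC entering only to ensure that everything admits compatible $G$-structures so that the natural isomorphisms are $G$-equivariant. If the TMC holds, then for any $\sigma \in X_1$ the module $Q_1(\sigma)$ is the restriction to $G_1$ of a tilting $G$-module, so it carries a $G$-module structure lifting the $G_1$-action. Consequently $\Hom_{G_1}(Q_1(\la),Q_1(\mu))$ inherits a natural $G$-action (with $G_1$ acting trivially), and the same is true of $\Ext^1_{G_1}(L(\la),L(\mu))$ in any case. The goal is to produce a $G$-equivariant injection between them.

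First, consider the short exact sequence
\[ 0 \to \Omega \to Q_1(\la) \to L(\la) \to 0, \]
where $\Omega = \rad Q_1(\la)$. Applying $\Hom_{G_1}(-,L(\mu))$ and using that $Q_1(\la)$ is $G_1$-projective (so the $\Ext^1$ term from $Q_1(\la)$ vanishes) together with the vanishing $\Hom_{G_1}(Q_1(\la),L(\mu)) = 0$ (any such map factors through the head $L(\la) \neq L(\mu)$), one obtains the natural isomorphism
\[ \Ext^1_{G_1}(L(\la),L(\mu)) \cong \Hom_{G_1}(\Omega,L(\mu)). \]
Next, apply $\Hom_{G_1}(-,Q_1(\mu))$ to the same sequence. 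Since $Q_1(\mu)$ is $G_1$-injective, the resulting sequence is short exact, and $\Hom_{G_1}(L(\la),Q_1(\mu)) = 0$ because any map from the simple $L(\la)$ lands in the socle $L(\mu)$ of $Q_1(\mu)$ and $\la \neq \mu$. Thus
\[ \Hom_{G_1}(Q_1(\la),Q_1(\mu)) \cong \Hom_{G_1}(\Omega,Q_1(\mu)). \]

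Finally, the socle inclusion $L(\mu) \hookrightarrow Q_1(\mu)$ induces an injection $\Hom_{G_1}(\Omega,L(\mu)) \hookrightarrow \Hom_{G_1}(\Omega,Q_1(\mu))$. Composing with the isomorphisms above yields the required embedding
\[ \Ext^1_{G_1}(L(\la),L(\mu)) \hookrightarrow \Hom_{G_1}(Q_1(\la),Q_1(\mu)). \]
Each of the three maps comprising this composition is natural in all arguments, and under the TMC all four modules $L(\la)$, $L(\mu)$, $Q_1(\la)$, $Q_1(\mu)$, as well as $\Omega = \ker(Q_1(\la) \to L(\la))$, are $G$-modules, so the natural isomorphisms and the socle inclusion are $G$-equivariant. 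The main subtlety is simply bookkeeping the $G$-equivariance: the TMC is needed precisely to give $\Omega$ a $G$-structure compatible with the $G_1$-action coming from the projective cover, which is what makes the dimension-shifting isomorphisms $G$-equivariant rather than merely $G_1$-equivariant.
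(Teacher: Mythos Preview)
Your argument is correct and is essentially the same dimension-shifting approach as the paper's proof: both use the projective cover sequence $0 \to \rad Q_1(\la) \to Q_1(\la) \to L(\la) \to 0$ together with the injective hull $L(\mu) \hookrightarrow Q_1(\mu)$, with the TMC supplying the $G$-structures. The only cosmetic difference is that the paper routes the identifications through $Q_1(\mu)/L(\mu)$ (applying $\Hom_{G_1}(L(\la),-)$ and $\Hom_{G_1}(Q_1(\la),-)$ to the injective-hull sequence), whereas you route them through $\Omega = \rad Q_1(\la)$ (applying $\Hom_{G_1}(-,L(\mu))$ and $\Hom_{G_1}(-,Q_1(\mu))$ to the projective-cover sequence); these are dual organizations of the same computation.
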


\begin{proof} The TMC implies that $Q_1(\sigma)$ can be lifted to $G$-modules for all $\sigma\in X_{1}$ and are tilting modules. In particular, $\Hom_{G_1}(Q_1(\la), Q_1(\mu))$ has a $G$-module structure. Consider  
$$0 \to L(\mu) \to Q_1(\mu) \to Q_1(\mu)/L(\mu) \to 0.$$
Using $\la \neq \mu$ one immediately obtains 
$$\Hom_{G_1}(L(\la),  Q_1(\mu)/L(\mu)
) \cong \Ext_{G_1}^1(L(\la), L(\mu)) .$$
Next we use 
$$ 0 \to \mbox{rad}(Q_1(\la)) \to Q_1(\la) \to L(\la) \to 0,$$
to obtain that 
$$  \Ext_{G_1}^1(L(\la), L(\mu)) \cong \Hom_{G_1}(L(\la),  Q_1(\mu)/L(\mu)) \hookrightarrow  \Hom_{G_1}(Q_1(\la),  Q_1(\mu)/L(\mu)).$$
Finally, using the first short exact sequence and the operator $\Hom_{G_1}(Q_1(\la), - )$ yields
 $$  \Ext_{G_1}^1(L(\la), L(\mu))  \hookrightarrow  \Hom_{G_1}(Q_1(\la),  Q_1(\mu)/L(\mu))\cong \Hom_{G_1}(Q_1(\la), Q_1(\mu)).$$
\end{proof}

Note that the TMC implies that  $\Hom_{G_1}(Q_1(\la), Q_1(\mu))^{(-1)}$ is a tilting module \cite{KN}. Moreover, the weights appearing in $\Hom_{G_1}(Q_1(\la), Q_1(\mu))$ are less than or equal to $2(p-1)\rho- \la + \omega_0\mu.$ One obtains immediately the following theorem. 

\begin{theorem}\label{T:FirstConstruction} Let $\la, \mu \in X_1$ with $\la \neq \mu$.  Assume the Tilting Module Conjecture holds.
\begin{itemize} 
\item[(a)] Then
$\Ext_{G_1}^1(L(\la), L(\mu))^{(-1)} $ is a $G$-submodule of some tilting module whose weights $\gamma$ satisfy $p \gamma \leq 2(p-1)\rho- \la + w_0\mu$. 
\item[(b)] If $L(\nu) \hookrightarrow \Ext_{G_1}^1(L(\la), L(\mu))^{(-1)},$ 
then $L(\nu)$ has to be a submodule of a Weyl module $\Delta(\ga)$, with $p \gamma \leq 2(p-1)\rho- \la + w_0\mu$.
\end{itemize} 
\end{theorem}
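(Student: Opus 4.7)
The plan is to treat part (a) essentially as a repackaging of Proposition \ref{P:FirstConstruction} combined with the facts recorded in the paragraph immediately preceding the theorem, and then deduce part (b) from part (a) by exploiting the Weyl-filtration of a tilting module.

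For part (a), I would start from the embedding
$$\Ext_{G_1}^1(L(\la), L(\mu)) \hookrightarrow \Hom_{G_1}(Q_1(\la), Q_1(\mu))$$
provided by Proposition \ref{P:FirstConstruction}. Both sides carry the trivial $G_1$-action (as $G_1$-invariants), so each is of the form $N^{(1)}$ for some rational $G$-module $N$; applying the Frobenius untwist is thus well-defined and preserves the embedding. I would then invoke the two facts stated in the preamble of the theorem: under the TMC, $\Hom_{G_1}(Q_1(\la), Q_1(\mu))^{(-1)}$ is a tilting $G$-module, and the weights of $\Hom_{G_1}(Q_1(\la), Q_1(\mu))$ are bounded above by $2(p-1)\rho - \la + w_0\mu$. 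Untwisting a weight $p\gamma$ to $\gamma$ converts the latter bound into $p\gamma \le 2(p-1)\rho - \la + w_0\mu$, which is exactly the assertion of (a).

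For part (b), let $T$ denote the tilting module produced in part (a), so that $L(\nu) \hookrightarrow T$. Since $T$ is tilting, it admits a Weyl filtration
$$\{0\} = T_0 \subset T_1 \subset \cdots \subset T_n = T, \qquad T_i/T_{i-1} \cong \Delta(\gamma_i).$$
Let $i$ be the smallest index with $L(\nu) \subset T_i$. By minimality, $L(\nu) \cap T_{i-1} = 0$ (otherwise the simplicity of $L(\nu)$ would force $L(\nu) \subset T_{i-1}$), so the quotient map $T_i \twoheadrightarrow T_i/T_{i-1}$ restricts to an embedding $L(\nu) \hookrightarrow \Delta(\gamma_i)$. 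It remains to observe that $\gamma_i$ is a weight of $T$ (indeed, the highest weight of the subquotient $\Delta(\gamma_i)$), so the weight estimate from part (a) forces $p\gamma_i \le 2(p-1)\rho - \la + w_0\mu$. Taking $\gamma := \gamma_i$ yields the claim.

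There is no real obstacle here; the proof is essentially assembly. The only mild care required is the Frobenius-twist bookkeeping in (a) — namely, that the embedding of Proposition \ref{P:FirstConstruction} is compatible with taking $(-1)$-twists, which holds automatically because both modules have trivial $G_1$-action — and the standard filtration-surgery argument in (b). Both are routine.
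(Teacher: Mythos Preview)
Your proposal is correct and follows essentially the same approach as the paper: part (a) is precisely the paper's assembly of Proposition~\ref{P:FirstConstruction} with the tilting and weight-bound observations in the preamble, and part (b) is the standard Weyl-filtration argument. The only cosmetic difference is that the paper first passes to an indecomposable tilting summand $T(\gamma)$ containing $L(\nu)$ in its socle before invoking the Weyl filtration, whereas you run the filtration-surgery argument directly on $T$; both yield the same bound.
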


\begin{proof} (a) This follows by the observation stated before the theorem and Proposition~\ref{P:FirstConstruction}. 

(b) From part (a), we know that if the TMC holds, then $L(\nu)$ has to appear in the socle  of some indecomposable tilting module $T(\ga)$, with $p \gamma \leq 2(p-1)\rho- \la + w_0\mu.$  The assertion follows from the fact that a tilting module has a Weyl filtration.
\end{proof}

\subsection{Counterexample Revisited for $\Phi=\rm{G}_{2}$}
 Let $G$ be of type $\rm{G}_2$ and $p=2.$ It was shown in \cite{BNPS20} that the TMC fails in this case. We will make use of the above set-up and give  a new argument. 
 
Let $\la = 0$ and $\mu= \omega_2$. According to  \cite[Prop. 5.2]{Jan91}, 
$$\Ext_{G_1}^1(k, L(\omega_2))^{(-1)} \cong  \Ext_{G_1}^1(k, \nabla(\omega_2))^{(-1)} \cong \nabla(\omega_1).$$ 
Moreover, $2\rho - \la +w_0\mu = 2\rho - \omega_2 = 2\omega_1 + \omega_2.$ The only weights $\ga$ with $2 \ga \leq 2\omega_1 + \omega_2$ are $0$, $\omega_1$, and $\omega_2$. The corresponding Weyl modules $\Delta(0)$, $\Delta(\omega_1)$, and $\Delta(\omega_2)$ have simple socles $k$, $k$, and $L(\omega_2)$, respectively. If the TMC held, this would contradict Theorem~\ref{T:FirstConstruction}(b). 
Thus the Tilting Module Conjecture fails for $\rm{G}_2$ and $p=2$.

\subsection{Second Method}\label{S:2nd} If we assume that $\Hom_{G_1}(Q_1(\la),\nabla(\mu))$ vanishes,  one obtains the following modifications of Proposition~\ref{P:FirstConstruction} and 
Theorem~\ref{T:FirstConstruction}.

\begin{prop}\label{P:SecondConstruction} Let $\la, \mu \in X_1$ with $\la \neq \mu$. If the Tilting Module Conjecture holds and $\Hom_{G_1}(Q_1(\la),\nabla(\mu))= 0$, then $\Ext_{G_1}^1(L(\la), \nabla(\mu)) $ is a $G$-submodule of  $\Hom_{G_1}(Q_1(\la), Q_1(\mu)).$ 
\end{prop}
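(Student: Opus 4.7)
The plan is to mimic the three-step strategy used for Proposition~\ref{P:FirstConstruction}, but with the costandard module $\nabla(\mu)$ playing the role of the simple module $L(\mu)$. As before, the TMC is what allows us to speak of $Q_1(\la)$ and $Q_1(\mu)$ as $G$-modules, so that $\Hom_{G_1}(Q_1(\la), Q_1(\mu))$ is naturally a $G$-module and every map in the argument becomes a $G$-morphism. Since $\mu \in X_1$, the standard embedding $L(\mu) \hookrightarrow \nabla(\mu)$ together with the $G_1$-injectivity of $Q_1(\mu)$ provides a $G_1$-embedding $\nabla(\mu) \hookrightarrow Q_1(\mu)$; assuming the TMC, this lifts to a $G$-embedding of $\nabla(\mu)$ into $Q_1(\mu)$, giving a short exact sequence of $G$-modules
$$0 \to \nabla(\mu) \to Q_1(\mu) \to Q_1(\mu)/\nabla(\mu) \to 0.$$

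First I would apply $\Hom_{G_1}(L(\la), -)$ to this sequence. Because $Q_1(\mu)$ is $G_1$-injective, the $\Ext^1$-term vanishes, and because $\la \neq \mu$ while $\soc_{G_1} Q_1(\mu) = L(\mu)$, the $\Hom_{G_1}(L(\la), Q_1(\mu))$-term vanishes as well. This yields a $G$-isomorphism
$$\Ext_{G_1}^1(L(\la), \nabla(\mu)) \;\cong\; \Hom_{G_1}(L(\la), Q_1(\mu)/\nabla(\mu)).$$
Next, apply $\Hom_{G_1}(-, Q_1(\mu)/\nabla(\mu))$ to the surjection $Q_1(\la) \twoheadrightarrow L(\la)$; the resulting left-exact sequence embeds the right-hand side of the display into $\Hom_{G_1}(Q_1(\la), Q_1(\mu)/\nabla(\mu))$ as a $G$-submodule.

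Finally, I would apply $\Hom_{G_1}(Q_1(\la), -)$ to the original short exact sequence. Since $Q_1(\la)$ is $G_1$-projective, the resulting long exact sequence reduces to
$$0 \to \Hom_{G_1}(Q_1(\la), \nabla(\mu)) \to \Hom_{G_1}(Q_1(\la), Q_1(\mu)) \to \Hom_{G_1}(Q_1(\la), Q_1(\mu)/\nabla(\mu)) \to 0.$$
Invoking the hypothesis $\Hom_{G_1}(Q_1(\la), \nabla(\mu)) = 0$ collapses this to a $G$-isomorphism $\Hom_{G_1}(Q_1(\la), Q_1(\mu)) \cong \Hom_{G_1}(Q_1(\la), Q_1(\mu)/\nabla(\mu))$. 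Composing this with the embedding from the previous step produces the desired $G$-inclusion
$$\Ext_{G_1}^1(L(\la), \nabla(\mu)) \;\hookrightarrow\; \Hom_{G_1}(Q_1(\la), Q_1(\mu)).$$

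The argument is essentially formal once the short exact sequence of $G$-modules $0 \to \nabla(\mu) \to Q_1(\mu) \to Q_1(\mu)/\nabla(\mu) \to 0$ is in hand, so the only genuine subtlety is verifying that the $G_1$-embedding $\nabla(\mu) \hookrightarrow Q_1(\mu)$ is indeed a $G$-morphism under the TMC; this is standard because $Q_1(\mu)$ then carries its canonical tilting-module structure and $\nabla(\mu)$ appears as the bottom term of a good filtration. Everything else is bookkeeping with long exact sequences, and tracking $G$-equivariance throughout is the only routine thing to watch.
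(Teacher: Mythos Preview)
Your proposal is correct and is precisely the argument the paper has in mind: the paper does not spell out a proof of this proposition but simply presents it as the evident modification of Proposition~\ref{P:FirstConstruction} obtained by replacing $L(\mu)$ with $\nabla(\mu)$, which is exactly what you do. Your identification of the one genuine point to check---that under the TMC the inclusion $\nabla(\mu)\hookrightarrow Q_1(\mu)$ is a $G$-map (equivalently, that $\nabla(\mu)$ sits at the bottom of a good filtration of the tilting module $T(2(p-1)\rho+w_0\mu)$, which follows since $\soc_G Q_1(\mu)=L(\mu)$)---is apt, and the remaining steps with long exact sequences are routine.
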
 

\begin{theorem}\label{T:SecondConstruction} Let $\la, \mu \in X_1$ with $\la \neq \mu$.  Assume the Tilting Module Conjecture holds and  $\Hom_{G_1}(Q_1(\la),\nabla(\mu))= 0$.
\begin{itemize} 
\item[(a)] Then $\Ext_{G_1}^1(L(\la), \nabla(\mu))^{(-1)} $ is a $G$-submodule of some tilting module $T(\ga)$, with $p \gamma \leq 2(p-1)\rho- \la + w_0\mu$.
\item[(b)] If
$L(\nu) \hookrightarrow \Ext_{G_1}^1(L(\la), \nabla(\mu))^{(-1)},$ 
then $L(\nu)$ has to be a submodule of a Weyl module $\Delta(\ga)$, with $p \gamma \leq 2(p-1)\rho- \la + w_0\mu$.
\end{itemize} 
\end{theorem}

The next result enables the employment of Theorem~\ref{T:SecondConstruction}. This strategy will be used later to produce a counterexample to the TMC in 
type $\rm{B}_n$.

\begin{prop}\label{P:SecondConstruction2} Let $\la, \mu \in X_1$ satisfy the equation
$\la + p \omega_i = \mu + \alpha_i,$
where $\alpha_i$ denotes a simple root and $\omega_i$ the corresponding fundamental weight. In addition, assume that $\langle \la, \alpha_i^{\vee}\rangle =0.$ Then there exists a $G$-module monomorphism
$$\nabla(\omega_i) \hookrightarrow   \Ext_{G_1}^1(L(\la), \nabla(\mu))^{(-1)}. $$
\end{prop}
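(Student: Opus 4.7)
The plan is to realize the desired embedding via the connecting homomorphism of a long exact sequence arising from a specific short exact sequence of $G$-modules. Start by observing that $\nu := \mu + \alpha_i = \la + p\omega_i$ is dominant: for $j \ne i$, $\langle \nu, \alpha_j^\vee\rangle = \langle \la, \alpha_j^\vee\rangle \ge 0$ (using $\langle \omega_i, \alpha_j^\vee\rangle = 0$), and $\langle \nu, \alpha_i^\vee\rangle = 0 + p = p > 0$. Since $-\alpha_i$ is the negative of a simple root, a standard $B$-cohomology computation gives $H^1(B, k_{-\alpha_i}) \cong k$, yielding a non-split $B$-module extension
\[
0 \to k_\mu \to E \to k_\nu \to 0.
\]
By Kempf vanishing (since $\mu \in X^+$), $R^1\ind_B^G(k_\mu) = 0$, so applying $\ind_B^G$ yields a short exact sequence of $G$-modules
\[
(\ast)\qquad 0 \to \nabla(\mu) \to M \to \nabla(\nu) \to 0.
\]

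The next step identifies $\nabla(\omega_i)$ inside $\Hom_{G_1}(L(\la), \nabla(\nu))^{(-1)}$. By Frobenius reciprocity, $\Hom_G(L(\la) \otimes \nabla(\omega_i)^{(1)}, \nabla(\nu)) \cong k$, and the canonical nonzero $G$-map is injective: it sends the highest weight vector, which lies in the simple socle $L(\nu) = L(\la)\otimes L(\omega_i)^{(1)}$ of both source and target, to the highest weight vector of $\nabla(\nu)$. Applying the left-exact functor $\Hom_{G_1}(L(\la),-)$ to this injection and using $\Hom_{G_1}(L(\la), L(\la) \otimes V^{(1)}) \cong V^{(1)}$ (by Schur's lemma and the triviality of $V^{(1)}$ on $G_1$), one obtains a $G$-embedding $\nabla(\omega_i) \hookrightarrow \Hom_{G_1}(L(\la), \nabla(\nu))^{(-1)}$. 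Composing with the connecting homomorphism
\[
\partial^{(-1)}\colon \Hom_{G_1}(L(\la),\nabla(\nu))^{(-1)} \longrightarrow \Ext^1_{G_1}(L(\la),\nabla(\mu))^{(-1)}
\]
arising from the long exact sequence for $(\ast)$ gives the candidate $G$-map $\nabla(\omega_i) \to \Ext^1_{G_1}(L(\la), \nabla(\mu))^{(-1)}$.

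The main obstacle is proving this candidate is injective. Since $\nabla(\omega_i)$ has simple socle $L(\omega_i)$, the composite is injective if and only if it is nonzero on $L(\omega_i)$; unraveling the construction, this is equivalent to the pullback of $(\ast)$ along the socle inclusion $L(\nu) \hookrightarrow \nabla(\nu)$ being a non-split short exact sequence of $G_1$-modules $0 \to \nabla(\mu) \to M' \to L(\nu) \to 0$. To prove the non-splitness, I would argue by contradiction: a $G_1$-splitting would produce a $G_1$-equivariant embedding $L(\nu) \hookrightarrow M = \ind_B^G(E)$, which via a base-change comparison identifying $M|_{G_1T}$ with $\ind_{B_1T}^{G_1T}(E|_{B_1T})$ together with Frobenius reciprocity would force a $B_1T$-splitting of $E|_{B_1T}$, contradicting the non-triviality of the restriction of the class of $E$ under $H^1(B, k_{-\alpha_i}) \to H^1(B_1T, k_{-\alpha_i})$. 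Verifying this last non-vanishing and the base-change identification carefully, and tracking the $G/G_1$-structure through the chain of functors, is the most delicate part of the argument.
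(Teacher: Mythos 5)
Your proposal takes a genuinely different route from the paper's. The paper invokes the standard isomorphism
$\Ext_{G_1}^1(L(\la), \nabla(\mu))^{(-1)} \cong \ind_B^G\bigl[\Ext_{B_1}^1(L(\la), \mu)^{(-1)}\bigr]$
(\cite[II.12.8]{rags}, using that $\Hom_{B_1}(L(\la),\mu)=0$ here), which reduces everything to $B_1$-cohomology. There, one resolves $\mu$ by the $B_1$-injective module $k[U_1]\otimes\mu$: the weight $\mu+\alpha_i=\la+p\omega_i$ of $(k[U_1]/k)\otimes\mu$ gives a $B$-line $p\omega_i\subseteq\Hom_{B_1}(L(\la),(k[U_1]/k)\otimes\mu)$, and the hypothesis $\langle\la,\alpha_i^\vee\rangle=0$ (forcing $\la-\alpha_i\notin\operatorname{wt}L(\la)$) shows this line does not come from $\Hom_{B_1}(L(\la),k[U_1]\otimes\mu)$, hence survives into $\Ext^1_{B_1}(L(\la),\mu)$; inducing to $G$ finishes. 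You instead build a $G$-module short exact sequence $0\to\nabla(\mu)\to M\to\nabla(\nu)\to0$ and try to detect nonvanishing of the connecting map at the level of $G_1$-cohomology of $G$-modules.

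There is a genuine gap in the final non-splitting argument, and you flag it yourself. The claimed identification $M|_{G_1T}\cong\ind_{B_1T}^{G_1T}(E|_{B_1T})$ is false: the right side always has dimension $2p^{|\Phi^+|}$, while $\dim M=\dim\nabla(\mu)+\dim\nabla(\nu)$, and these disagree in general (e.g.\ already in type $\rm{B}_3$, $p=2$, with $\mu=\rho-\omega_1$). The relationship between $\ind_B^G$ and $\ind_{B_1T}^{G_1T}$ is governed by a spectral sequence (with $\ind_{B/B_1}^{G/G_1}$ intervening), not a naive base change, so the chain ``$G_1$-splitting of the pullback $\Rightarrow$ $B_1T$-splitting of $E$'' does not go through. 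Without this, you have a well-defined $G$-map $\nabla(\omega_i)\to\Ext^1_{G_1}(L(\la),\nabla(\mu))^{(-1)}$ but no proof it is nonzero. In effect you have pushed the computation up to $G_1$, where the extension class is harder to see; the paper's reduction to $B_1$ via Jantzen's isomorphism keeps the computation where the class is visible by a transparent weight argument, and this is exactly where the hypothesis $\langle\la,\alpha_i^\vee\rangle=0$ is used. If you want to repair your approach, the cleanest fix is precisely to replace the base-change step by \cite[II.12.8]{rags}, at which point you are essentially back to the paper's proof.
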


\begin{proof} In computing $G_1$-extensions, one has  (cf. \cite[Lemma II.12.8]{rags})
$$\Ext_{G_1}^1(L(\la), \nabla(\mu))^{(-1)} \cong \ind_{B}^G[\Ext_{B_1}^1(L(\la), \mu)^{(-1)}].$$
There exists a short exact sequence of $B$-modules
$$0 \to \mu \to   k[U_1] \otimes \mu \to k[U_1]/k \otimes \mu \to 0,$$
which yields the  exact sequence 
$$ 0 \to \Hom_{B_1}(L(\la),k[U_1] \otimes \mu) \to      \Hom_{B_1}(L(\la),k[U_1]/k \otimes \mu) \to \Ext_{B_1}^1(L(\la), \mu) \to  0.$$
From weight considerations one obtains a $B$-module injection
$$p \omega_i  \cong \Hom_{B_1}(L(\la),\mu + \alpha_i)\hookrightarrow \Hom_{B_1}(L(\la),k[U_1]/k \otimes \mu).$$
Note that $\langle \la, \alpha_i^{\vee}\rangle =0$ implies that $\lambda - \alpha_i$ is not a weight of $L(\lambda).$ This implies that $p\omega_i$ is not a weight of   $\Hom_{B_1}(L(\la),k[U_1] \otimes \mu).$ 
Consequently, there is an injection
$$p \omega_i  \hookrightarrow \Ext_{B_1}^1(L(\la), \mu).$$
Since induction is left-exact, one obtains a $G$-module monomorphism: 
$$\nabla(\omega_i) \hookrightarrow \Ext_{G_1}^1(L(\la), \nabla(\mu))^{(-1)}.$$
\end{proof}

\subsection{Third Method}\label{S:3rd}
For a restricted weight $\la \in X_1$ and a dominant weight $\mu$, we will look at $\Hom_{G_1}({Q}_1(\la), \nabla(\mu)).$    If the TMC holds, then it follows from \cite[Theorem 9.2.3]{KN}
 that $\Hom_{G_1}({Q}_1(\la), \nabla(\mu))^{(-1)}$ has a good filtration. The idea now is to find appropriate weights $\la$ and $\mu$ that violate this last statement. We will use this technique to produce counterexamples for groups of type $\rm{B}_3$ with $p = 2$ and type $\rm{C}_3$ with $p=3.$


\section{Low Rank Counterexamples}\label{S:Low}

\subsection{Summary of Results:} In this section we will show that the TMC fails and Jantzen's Question has a negative answer for $G$ when $\Phi=\rm{B}_{3}$, $\rm{C}_{3}$ and $\rm{D}_{4}$. In particular, the following theorem will be proved. 

\begin{theorem}\label{T:small-rank} Let $G$ be a simple algebraic group. Then $T(2(p-1)\rho)$ is not isomorphic to $\widehat{Q}_1(0)$ as a $G_1T$-module in the following cases: 
\begin{itemize} 
\item[(a)] $\Phi=\rm{B}_{3}$, $p=2;$
\item[(b)]  $\Phi=\rm{C}_{3}$, $p=3;$
\item[(c)] $\Phi=\rm{D}_{4}$, $p=2.$
\end{itemize} 
In the cases (a)-(c), there exists an induced module $\nabla(\lambda)$, $\lambda\in X^{+}$, that does not admit a good $p$-filtration.
\end{theorem}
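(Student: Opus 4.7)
The plan is to treat the three cases separately, applying the methods of Section~\ref{S:methods} to construct counterexamples to the TMC in each, and then to derive the failure of Jantzen's Question from the same data.

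For $\Phi=\rm{D}_{4}$ with $p=2$, I would mimic the $\rm{G}_{2}$, $p=2$ argument revisited in Section~\ref{S:methods} (First Method): pick a small pair of restricted weights $\lambda,\mu\in X_{1}$ (the natural first try being $\lambda=0$ and $\mu$ a single fundamental weight), compute $\Ext_{G_{1}}^{1}(L(\lambda),L(\mu))^{(-1)}$ explicitly, and verify via Theorem~\ref{T:FirstConstruction}(b) that some simple constituent $L(\nu)$ of this module fails to embed in any Weyl module $\Delta(\gamma)$ with $p\gamma\le 2(p-1)\rho-\lambda+w_{0}\mu$. For cases (a) and (b), $\rm{B}_{3}$ at $p=2$ and $\rm{C}_{3}$ at $p=3$, I would use the Third Method of Section~\ref{S:3rd}: locate restricted $\lambda$ and dominant $\mu$ such that $\Hom_{G_{1}}(Q_{1}(\lambda),\nabla(\mu))^{(-1)}$ does not admit a good filtration, contradicting \cite[Theorem 9.2.3]{KN}. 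In practice, this amounts to identifying a composition factor $L(\nu)$ whose forced multiplicity in the character exceeds the number of $\nabla(\nu)$-factors that can fit — for instance because $\nu$ lies too close to the highest weight for a full Weyl module to sit on top, or because a higher factor $\nabla(\nu')$ already accounts for every appearance of $L(\nu)$.

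For the second assertion — existence of $\nabla(\mu)$ without a good $p$-filtration in each case — I would invoke the implication of \cite{So}: if JQ and Conjecture~\ref{C:stla} both hold, then so does the TMC. In these low-rank, small-prime settings, Conjecture~\ref{C:stla} (that $\St_{1}\otimes L(\lambda)$ is tilting for all $\lambda\in X_{1}$) should be directly checkable from the handful of restricted weights by character comparison with known tilting modules. Granting $\St_{1}\otimes L(\lambda)$ tilting, the established failure of the TMC forces some $\nabla(\mu)$ to lack a good $p$-filtration, and the specific offending $\mu$ can typically be read off from the $\Ext$- or $\Hom$-computation that produced the TMC counterexample.

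The main obstacle will be the concrete computation of $\Ext_{G_{1}}^{1}(L(\lambda),L(\mu))$ and $\Hom_{G_{1}}(Q_{1}(\lambda),\nabla(\mu))$ for the relevant small weights. Projective indecomposable $G_{1}$-modules at ranks $3$ and $4$ and primes $2$, $3$ are not standardly tabulated, so the calculation relies on a combination of Jantzen's sum formula, the concrete embedding $\nabla(\omega_{i})\hookrightarrow \Ext_{G_{1}}^{1}(L(\lambda),\nabla(\mu))^{(-1)}$ furnished by Proposition~\ref{P:SecondConstruction2} whenever the numerical identity $\lambda+p\omega_{i}=\mu+\alpha_{i}$ holds, Kempf-style vanishing arguments, and painstaking inspection of Weyl module socles in the admissible weight range. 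A systematic search over small restricted $\lambda$ and $\mu$ — possibly aided by the decomposition patterns already known in these small-rank examples — will almost certainly be required to pinpoint the correct weights in each case.
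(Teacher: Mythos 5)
Your overall division of labor (First Method for $\rm{D}_{4}$, Third Method for $\rm{B}_{3}$ and $\rm{C}_{3}$) matches the paper's, but there are two substantive gaps, one in the $\rm{D}_{4}$ argument and one in how you deduce the failure of Jantzen's Question for $\rm{B}_{3}$ and $\rm{C}_{3}$.

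For $\rm{D}_{4}$ the paper takes $\lambda=0$ and $\mu=\omega_{1}+\omega_{3}+\omega_{4}$ (not a single fundamental weight); one has $\Ext_{G_{1}}^{1}(k,L(\omega_{1}+\omega_{3}+\omega_{4}))\cong k\oplus L(\omega_{2})^{(1)}$, and $L(\omega_{2})$ fails to be a Weyl-module socle in the relevant range, which triggers Theorem~\ref{T:FirstConstruction}(b). But your outline misses a further step: Theorem~\ref{T:FirstConstruction} only shows that \emph{at least one} of $\widehat{Q}_{1}(\lambda)$, $\widehat{Q}_{1}(\mu)$ fails to lift to a tilting module. To pin the failure on $\widehat{Q}_{1}(0)$ and hence on $T(2\rho)$, the paper separately verifies (by a character argument using that the dominant weights of $\nabla(\omega_{2})$ are only $\omega_{2}$ and $0$) that $T(\rho+\omega_{2})|_{G_{1}T}\cong\widehat{Q}_{1}(\omega_{1}+\omega_{3}+\omega_{4})$. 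Without this isolation step the conclusion of the theorem for $\rm{D}_{4}$ does not follow from the $\Ext$-computation alone.

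For the Jantzen's Question part your proposal routes all three cases through Sobaje's implication $(\text{JQ} \wedge \text{Conjecture~\ref{C:stla}}) \Rightarrow \text{TMC}$. That requires independently verifying that $\St_{1}\otimes L(\lambda)$ is tilting for all $\lambda\in X_{1}$. The paper does this only for $\rm{D}_{4}$, $p=2$, by citing \cite[5.6]{BNPS19}; it does \emph{not} claim or verify Conjecture~\ref{C:stla} in types $\rm{B}_{3}$ at $p=2$ or $\rm{C}_{3}$ at $p=3$. You call this ``directly checkable by character comparison,'' but that is not obvious, and if it fails the indirect argument collapses (the TMC failure would then only tell you one of JQ or Conjecture~\ref{C:stla} fails, not which). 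The paper instead gives direct arguments for those two cases: it identifies the specific module ($\nabla(2\omega_{2})$ for $\rm{B}_{3}$, $\nabla(2,1,2)$ for $\rm{C}_{3}$) and shows, using the Jantzen filtration data already computed for the TMC part and a careful composition-series/radical-series ordering argument, that the module cannot admit a good $p$-filtration. Your plan therefore needs either a proof of Conjecture~\ref{C:stla} in these two settings or, more economically, the paper's direct route.
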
 

\subsection{Jantzen Filtration and Jantzen Sum Formula}\label{S:Ja-Fil} Several of the important calculations that we will use are derived via the Jantzen filtration (cf. \cite[Prop. II.8.19]{rags}) as described below. 

For each $\la \in X^+$, there is a filtration of $G$-modules
$$
\Delta(\la) = \Delta(\la)^0 \supseteq \Delta(\la)^1 \supseteq \Delta(\la)^2 \supseteq \cdots
$$
such that $\text{rad}_{G}\Delta(\lambda)=\Delta(\la)^{1}$ and $\Delta(\la)/\Delta(\la)^1 \cong L(\la)$. Moreover,  
$$
\sum_{i > 0} \operatorname{ch} \Delta(\la)^i = \sum_{\a \in \Phi^+}\sum_{0 < mp < \langle\la + \rho,\a^{\vee}\rangle} \nu_p(mp)\chi(s_{\a,mp}\cdot \la).
$$

\subsection{Some Character Data} For $\Phi=\rm{B}_{n}$ when $p=2$, we record the following information about the structure of various representations. 

\begin{lemma}\label{L:Data}
Let $G$ be of type $\rm{B}_n$, $n \geq 3,$ and $p=2.$
\begin{itemize}
\item[(a)] The composition factors of $\nabla(\omega_1)$ are $L(\omega_1)$ and the trivial module, each appearing once. In particular, the head of $\nabla(\omega_1)$ consists of the trivial module.
\item[(b)] For $n$ odd, the module $\nabla(\omega_2)$   is uniserial with the three composition factors of $L(\omega_2)$, $L(\omega_1)$ and the trivial module, listed from bottom to top. Its head consists of the trivial module.
\item[(c)] For $n$ even, the module $\nabla(\omega_2)$   has four composition factors: $L(\omega_2)$, $L(\omega_1)$, and the trivial module with multiplicity two. Its head consists of just one copy of the trivial module.
\item[(d)] For $n = 3$, the dominant weights appearing in the simple module $L(\omega_1+\omega_2)$  are $\omega_1 + \omega_2$ with multiplicity one, $2 \omega_3$ with multiplicity two, and $\omega_1$ with multiplicity four.
\end{itemize}
\end{lemma}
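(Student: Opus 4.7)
The four parts all concern the structure of small Weyl modules in type $\rm{B}_n$ at $p=2$, and the common tool I would use is the Jantzen filtration together with the Jantzen sum formula recalled in \S\ref{S:Ja-Fil}.  Part (a) is the geometric input, and parts (b)--(d) then reduce to character calculations that feed on (a) and on each other.

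For (a), the plan is to realize $\nabla(\omega_1)$ as the $(2n+1)$-dimensional natural representation of $\mathrm{SO}(2n+1)$.  Its character is the sum of $e^{\pm\varepsilon_i}$ and $e^0$, so as a $T$-module $\nabla(\omega_1)$ has dimension $2n+1$ and the $0$ weight space is one-dimensional.  Applying the Jantzen sum formula at $\lambda=\omega_1$, only the short positive root $\alpha=\varepsilon_1$ can satisfy $0<mp<\langle\omega_1+\rho,\alpha^\vee\rangle=2n+1$ with $\nu_2(mp)>0$: the relevant index is $mp=2n$, and $s_{\alpha,2n}\cdot\omega_1=0$ (or can be computed to give an element $W$-conjugate to $0$ with $\chi$-contribution $\chi(0)=1$).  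This produces exactly one trivial composition factor, yielding $\nabla(\omega_1)\sim[L(\omega_1),k]$.  Since $L(\omega_1)$ is the unique simple with highest weight $\omega_1$ and $\nabla(\omega_1)$ has simple socle $L(\omega_1)$, the trivial factor sits on top.

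For (b) and (c), I would apply the Jantzen sum formula to $\nabla(\omega_2)$.  The positive roots of $\rm{B}_n$ are $\varepsilon_i$, $\varepsilon_i\pm\varepsilon_j$ with $i<j$.  For each one, I compute $\langle\omega_2+\rho,\alpha^\vee\rangle$, isolate the indices $m$ with $2\mid 2m$ and $0<2m<\langle\omega_2+\rho,\alpha^\vee\rangle$, and read off $\chi(s_{\alpha,2m}\cdot\omega_2)$.  Because $\omega_2=\varepsilon_1+\varepsilon_2$ is small, the dot-images that survive as signed multiples of dominant characters will only be $\chi(\omega_2)$, $\chi(\omega_1)$, and $\chi(0)$; the total sum collapses, after cancellation, to $\operatorname{ch}L(\omega_1)+c\cdot\operatorname{ch}k$, where $c$ depends on the parity of $n$ (this is the step in which $n$ odd versus $n$ even separates, because the short root $\varepsilon_n$ pairs differently with $\omega_2+\rho$ modulo $2$).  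From part (a), $\operatorname{ch}L(\omega_1)=\operatorname{ch}\nabla(\omega_1)-\operatorname{ch}k$.  Substituting gives the composition factor multiplicities asserted in (b), (c).  Uniseriality in (b) follows since each composition factor appears with multiplicity one and the socle is $L(\omega_2)$; to place the trivial factor at the head, I observe that any nonzero map $\nabla(\omega_2)\to L(\omega_1)$ would, by applying $\nabla(\omega_1)\twoheadrightarrow k$ and Frobenius reciprocity, contradict the dimension count.  In case (c), the presence of two trivial composition factors but only one in the head requires comparing $\operatorname{Hom}_G(\nabla(\omega_2),k)=\operatorname{Hom}_B(\omega_2,k)$, which is one-dimensional.

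For (d), the plan is parallel.  Run the Jantzen sum formula for $\nabla(\omega_1+\omega_2)$ in type $\rm{B}_3$ at $p=2$; the only dominant highest weights strictly below $\omega_1+\omega_2$ are $2\omega_3$, $\omega_1+\omega_2-\alpha$ for simple $\alpha$ (if dominant), $\omega_2$, $2\omega_1$, $\omega_1$, and $0$.  Using (b) to expand $\operatorname{ch}\nabla(\omega_2)$, the previously known character of $\nabla(2\omega_3)$ (the spin tensor situation), and (a) for $\nabla(\omega_1)$, one solves linearly for $\operatorname{ch}L(\omega_1+\omega_2)$.  The dominant-weight multiplicities $1,2,4$ at $\omega_1+\omega_2,2\omega_3,\omega_1$ are then read off.

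The main obstacle will be the bookkeeping in the Jantzen sum formula for $\omega_2$ and $\omega_1+\omega_2$: one has a moderate number of positive roots, and for each one must carefully identify which reflections $s_{\alpha,2m}\cdot\lambda$ land on a $W$-dominant weight and with what sign in $\chi(\cdot)$.  The parity dependence on $n$ in (b)/(c) is concentrated entirely in the short root $\varepsilon_n$, which is where I expect the cleanest case split.  Once this enumeration is done, the remaining steps --- extracting composition multiplicities and analyzing the head via $\operatorname{Hom}_G$ --- are standard.
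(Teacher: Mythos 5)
Your route is genuinely different from the paper's. The paper proves (a)--(c) in one sentence by identifying $\nabla(\omega_2)$ with the dual of the adjoint representation and citing \cite[Proposition 6.9(a)]{Jan91} for all three statements, and proves (d) by appealing to the Dowd--Sin special isogeny between types $\mathrm{B}_n$ and $\mathrm{C}_n$ at $p=2$ \cite[I.3, I.4.2(3)]{DS}: the $\mathrm{B}_3$-character of $L(\omega_1+\omega_2)$ equals that of the $\mathrm{C}_3$-module $L(\omega_1+\omega_2)=\nabla(\omega_1+\omega_2)$, whose dominant weight multiplicities are read off from Weyl's formula. Your plan instead runs the Jantzen sum formula from scratch for $\Delta(\omega_1)$, $\Delta(\omega_2)$, and $\Delta(\omega_1+\omega_2)$. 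That is a legitimate self-contained alternative, and the sum-formula bookkeeping is exactly the style of argument the paper itself carries out later (Proposition~\ref{P:Deltastructure}), but it trades a two-line citation for several pages of enumeration; the special-isogeny trick in (d) in particular collapses a nontrivial Jantzen computation to a classical character.

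Two specific points in your sketch need repair before it is a proof. First, in (a) you assert that ``only $\alpha=\varepsilon_1$ can satisfy $0<mp<\langle\omega_1+\rho,\alpha^\vee\rangle$.'' That inequality is satisfied for many positive roots (for instance $\alpha=\varepsilon_1+\varepsilon_2$ pairs to $2n-1$, which exceeds $2$); what is true, and what you must actually verify in the style of Lemma~\ref{L:chivanishing}, is that the corresponding $\chi(s_{\alpha,2m}\cdot\omega_1)$ vanish because two entries of $(\omega_1+\rho-2m\alpha)$ agree in absolute value. You should also not conclude ``exactly one trivial composition factor'' directly from the sum formula, since $\nu_2(2n)>1$ when $n$ is even and the sum counts layer-multiplicities; the cleanest fix is the one you already have in hand, namely that the zero-weight space of $\nabla(\omega_1)$ is one-dimensional. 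Second, in (b)/(c) you locate the parity dependence in ``the short root $\varepsilon_n$,'' but $\langle\omega_2+\rho,\varepsilon_n^\vee\rangle=2\cdot\tfrac12=1$ for every $n\ge 3$, so $\varepsilon_n$ never contributes a single term. The $n$-dependence in the sum comes from the roots involving $\varepsilon_1,\varepsilon_2$ (whose pairings are $2n+1$, $2n-1$, $2n$ and whose $\nu_2$-weights depend on the parity of $n$), so that portion of your plan needs to be redirected. Finally, for (b) uniseriality and the position of the trivial module in the head require more than counting composition factors; you will need either the layer structure of the Jantzen filtration or a $\operatorname{Hom}$ argument as you indicate for (c), applied uniformly.
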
 

\begin{proof} Note that $\nabla(\omega_2)$ is the dual of the adjoint representation.  Claims (a) through (c) can be found in \cite[Proposition 6.9(a)]{Jan91} and its proof.

For part (d), one can make use of the special isogenies between types $\rm{B}_n$ and $\rm{C}_n$ that exist for $p=2$ \cite[I.3]{DS}. As a result, one obtains a sharpened version of the Steinberg tensor product theorem
\cite[I.4.2 (3)]{DS}. The character of the $\rm{B}_3$-module $L(\omega_1+ \omega_2)$ can be obtained via the character of the  $\rm{C}_3$-module $L(\omega_1 + \omega_2),$ which is identical to the  $\rm{C}_3$-module $\nabla(\omega_1 + \omega_2).$ 
\end{proof} 


\subsection{$\Phi=\rm{B}_3$ and $p=2$} In \cite{BNPS21} it was shown that the TMC holds for a group of type $\rm{B}_2$ and all primes. In this section, the third method that was introduced in Section \ref{S:3rd} is used to show that the TMC fails for a group of type $\rm{B}_3$ and $p=2.$  

\subsubsection{Tilting Module Conjecture}\label{S:TMCB3} Assume that $T(2\rho)\mid_{G_{1}T}\cong\widehat{Q}_1(0)$.  We will show that $\Hom_{G_1}(\widehat{Q}_1(0), \nabla(2 \omega_2))^{(-1)}$ does not afford a good filtration, thereby obtaining a contradiction to \cite[Theorem 9.2.3]{KN}.

The dominant weights less than or equal to $2\om_2$ are  
$2 \omega_2,$ $\om_1+ 2 \om_3,$ $\om_1+ \om_2,$ $2\om_3,$ $2\om_1,$ $\om_2,$ $\om_1,$ and $0.$ All of these are linked to $2\om_2$ and also appear in $\nabla(2 \omega_2). $ Note that $\omega_3$ is minuscule.
Lemma \ref{L:Data}
provides therefore sufficient data to determine the characters of all simple modules with highest weights from the above list as well as all the decomposition numbers for all $\nabla(\sigma)$ with weights from the list. Alternatively, one could also refer to the tables in \cite{Lue}. For our argument we will make use of the Jantzen filtration for the Weyl module $\Delta(2\om_2).$ The following table lists  the multiplicities of each composition factor of  $\Delta(2 \om_2)$ as well as the multiplicities of each simple factor in the Jantzen sum formula. We include only weights with positive multiplicity.
\vskip .25cm 
\begin{center} 
\begin{tabular}{|c |c |c|}
\hline
$\la$ & $\left[ \Delta(2 \om_2):L(\la)\right]$ & $ \left[\sum_{i>0} \text{ch } \Delta(2 \om_2)^i:L(\la)\right]$ \\
 
\hline \hline
$2\om_2$ & 1 &0\\
\hline
$\om_1+2 \om_3$ & 1&1\\
\hline
$\om_1+\om_2$ &1&2\\
\hline
$2\om_1$& 2&2\\
\hline
$\om_2$ &2&4  \\
\hline
 $0$& 2&2\\
 \hline
 
 \end{tabular}
 \end{center}
\vskip .25cm 

The above table shows that any composition factor with highest weight $\om_1+2\om_3$, $2\om_1,$ or $0$ has to appear in the second highest layer of the Jantzen filtration, that is, in $\Delta(2\omega_2)^1/\Delta(2\omega_2)^2$. Hence, only composition factors with highest weight $\om_2$ or $\om_1+\om_2$ appear in $\Delta(2 \om_2)^2.$  Recall the $\tau$-functor as defined in Section \ref{S:notation}.  Set $S=^{\tau}\!\!\!\left(\Delta(2\om_2)/ \Delta (2\om_2)^2\right)$. Then one obtains the short exact sequence via the $\tau$-functor:
$$0 \to S \to \nabla(2\om_2) \to  ^{\tau}\!\!\!\left(\Delta (2\om_2)^2\right)\to 0.$$
It follows that 
$$\Hom_{G_1}(\widehat{Q}_1(0), \nabla(2 \omega_2)) \cong \Hom_{G_1}(\widehat{Q}_1(0), S).$$
In addition, there exists an embedding $\nabla(\om_2)^{(1)} \hookrightarrow \nabla ( 2\om_2).$
Since none of the composition factors of $\nabla(\om_2)^{(1)}$ appear in $^{\tau}\!\!\left(\Delta (2\om_2)^2\right)$ one obtains an embedding  $\nabla(\om_2)^{(1)} \hookrightarrow S$.  Lemma \ref{L:Data}(b) now yields embeddings $$L(\omega_1)^{(1)}\hookrightarrow \nabla( \omega_1)^{(1)} \hookrightarrow S/L(\omega_2)^{(1)}=^{\tau}\!\!\!\left(\Delta(2\om_2)^1/ \Delta (2\om_2)^2\right).$$
 Note that the layers in the Jantzen filtration are ${\tau}$-invariant (cf. \cite[II.8.19(3)]{rags}). One obtains a projection $\pi: S\twoheadrightarrow L(\om_1)^{(1)}.$
Next  we define $Q$ via 
$$0 \to \nabla(\omega_2)^{(1)} \to S \to Q \to 0.$$
Since  $\nabla(\omega_2)$ has a simple head isomorphic to the trivial module, $\pi$ has to factor through $Q$.  Therefore,  
both $Q$ and   $ \Hom_{G_1} (\widehat{Q}_1(0), Q)$ map onto $L(\om_1)^{(1)}.$
Since both $[S:L(\om_1)^{(1)}]$ and $[S:k]$ are at most two, one concludes, via subtraction of the character of $\nabla(\om_2)^{(1)},$ that $[Q: L(\om_1)^{(1)}]=1$ and that $[Q: k]\leq 1.$ No other composition factor of $Q$ can contribute to $ \Hom_{G_1} (\widehat{Q}_1(0), Q) .$
The character of $ \Hom_{G_1} (\widehat{Q}_1(0), Q)^{(-1)} $  is therefore either equal to the character of $L(\om_1)$ or to the character of $L(\om_1)$ together with a trivial character.  Since $ \Hom_{G_1} (\widehat{Q}_1(0), Q)^{(-1)} $ maps onto $L(\om_1)$ and  $\nabla(\omega_1) \neq L(\om_1)$ one concludes that $ \Hom_{G_1} (\widehat{Q}_1(0), Q)^{(-1)}$ cannot have a good filtration.
\\\\
 On the other hand, by looking at
$$0 \to \Hom_{G_1} (\widehat{Q}_1(0), \nabla(\omega_2)^{(1)}) \to \Hom_{G_1} (\widehat{Q}_1(0), S)  \to \Hom_{G_1} (\widehat{Q}_1(0), Q) \to 0$$
which is equivalent to 
$$0 \to  \nabla(\omega_2)^{(1)} \to \Hom_{G_1} (\widehat{Q}_1(0), \nabla(2\om_2))  \to \Hom_{G_1} (\widehat{Q}_1(0), Q)\to 0,
$$
one concludes that  $\Hom_{G_1} (\widehat{Q}_1(0), \nabla(2\om_2))^{(-1)}$ does not afford a good filtration.

\subsubsection{Jantzen's Question}\label{S:JQB3} We show that the induced module $\nabla(2 \om_2)$ does not afford a good $p$-filtration. Suppose that $\nabla(2 \om_2)$ has a good $p$-filtration. From the data given in the table of Section \ref{S:TMCB3}, one concludes that the factors $\nabla(\om_2)^{(1)}$ and $\nabla(\om_1)^{(1)}$ each have to appear once in such a filtration. The first, $\nabla(\om_2)^{(1)},$ appears at the very bottom of any good $p$-filtration.  Then $V := \nabla(2\om_2)/\nabla(\om_2)^{(1)}$ also has as good $p$-filtration with one of the factors being $\nabla(\om_1)^{(1)}$.  The module$\nabla(\om_1)^{(1)}$  has two composition factors: the trivial module $k$ and $L(\om_1)^{(1)}$. Further, we would have $[V : L(\om_1)^{(1)}] = 1 = [V : k]$. By the nature of $\nabla(\om_1)^{(1)}$, in the radical series for $V$,  the copy of $k$ must appear higher than the $L(\om_1)^{(1)}$.  
This means that the $k$ must appear above the $L(\om_1)^{(1)}$ in any composition series for $V$ (as, in general, a composition series is a refinement of the radical series).  
However, the argument in the previous section shows that there exists a composition series of $V$ in which a composition factor of the form $L(\om_1)^{(1)}$ appears higher than the factor isomorphic to the trivial module; a contradiction.

\subsubsection{} The methods of Section \ref{S:methods} suggest a connection between the validity of the TMC and good filtrations on $G_1$-extension groups. With an eye towards a more precise connection (that will be discussed further in Section \ref{S:questions}), we note that, from \cite[Table II.2.5(a) on page 2632]{DS} or \cite[Prop. 6.9]{Jan91}, $$\Ext_{G_1}^1(k, L(\om_2))^{(-1)} \cong \nabla(\om_1),$$ which is not tilting.


\subsection{$\Phi=\rm{C}_3$ and $p=3$} We again employ the method introduced in Section \ref{S:3rd} to show that the Tillting Module Conjecture fails in this case. 

\subsubsection{Tilting Module Conjecture}\label{S:TMCC3} By using the Jantzen filtration one obtains the following tables. 
\vskip .25cm 
\begin{center} 
\begin{tabular}{|c|c|}
\hline
$\la$ & $\sum_{i>0} \text{ch } \Delta(\la)^i$\\
\hline
\hline
$(0,0,0)$ & $\emptyset$\\
\hline
$(0,1,0)$ & $\chi(0,0,0)$\\
\hline
$(1,0,1)$ & $\chi(0,1,0) - \chi(0,0,0)$\\
\hline
$(0,0,2)$ & $\emptyset$\\
\hline
$(1,1,1)$ &  $\chi(0,0,2) +2\times \chi(1,0,1)  - \chi(0,1,0) + \chi(0,0,0)$\\
\hline
 $(0,3,0)$ & $\chi(1,1,1) - \chi(0,0,2) - \chi(1,0,1)+ 2\times \chi(0,1,0) - \chi(0,0,0)$ \\
 \hline
 $(2,0,2)$ & $2 \times \chi(1,1,1) + \chi(0,0,2) - 2 \times \chi(1,0,1) + \chi(0,1,0) - \chi(0,0,0)$ \\
 \hline
 $(2,1,2)$ & $\chi(2,0,2) +2 \times \chi(0,3,0)-\chi(0,0,2)+ \chi(1,0,1)- 2 \times \chi(0,1,0)+ \chi(0,0,0)$ \\
 \hline
 \end{tabular}
\end{center}
\vskip .25cm 
\begin{center}

\begin{tabular}{|c|c|||c|c|}
\hline
$\la$ & $\left[\sum_{i>0} \text{ch } \Delta(\la)^i:L(0,0,0)\right]$  & $\la$ & $\left[\sum_{i>0} \text{ch } \Delta(\la)^i:L(0,3,0)\right]$ \\
\hline
\hline
$(0,0,0)$ & $0$ & $(2,0,2)$ & $0$ \\        
\hline
$(0,1,0)$ & $1$  &  $(2,1,2)$ & $2$ \\
\hline
$(1,0,1)$ & $0$   &           &  \\
\hline
$(0,0,2)$ & $0$    &           &\\
\hline
$(1,1,1)$ &  $0$    &            &\\
\hline
 $(0,3,0)$ & $1$     &             &\\
 \hline
 $(2,0,2)$ & $0$    &             &\\
 \hline
 $(2,1,2)$ & $1$      &             &\\
 \hline
 \end{tabular}
\end{center}
\vskip .25cm 

Assume that $T(4 \rho) \mid_{G_1T}\cong  \widehat{Q}_1(0,0,0)$. From the tables above, we conclude that 
$$[\sum_{i>0} \text{ch } \Delta(2,1,2)^i:L(0,0,0)] = [\Delta(2,1,2): L(0,0,0)] =1,$$
while 
$$ [\Delta(2,1,2)^2:L(0,0,0)]=0.$$
Note that the only non-restricted composition factor of $\Delta(2,1,2)$ has highest weight $(0,3,0)$ and that 
$\left[\sum_{i>0} \text{ch } \Delta(2,1,2)^i:L(0,3,0)\right] = 2.$ There are two possibilities:
\vskip .25cm 
\noindent 
{\bf Case 1}: $ [ \Delta(2,1,2)^2:L(0,3,0)]=0.$
\vskip .15cm 
This implies that $[\Delta(2,1,2): L(0,3,0)]=2$ and therefore 
\begin{eqnarray*}
\text{ch}\Hom_{G_1}(T(4 \rho): \nabla(2,1,2))^{(-1)} &=& \text{ch}\Hom_{G_1}(\hQ_1(0,0,0), \nabla(2,1,2))^{(-1)}\\
& =&
\text{ch}\Hom_{G_1}(\hQ_1(0,0,0), \Delta(2,1,2))^{(-1)} \\
&=&
2\cdot\text{ch } L(0,1,0) + \text{ch } L(0,0,0).
\end{eqnarray*}
Since $\nabla(\omega_2)$ has composition factors $L(\omega_2)$ and $k$, $\text{ch}\Hom_{G_1}(T(4 \rho): \nabla(2,1,2))^{(-1)}$ cannot be the character of a module with a good filtration. Thus we obtain a contradiction to \cite[Theorem 9.2.3]{KN}.
\vskip .25cm 
\noindent 
{\bf Case 2}: $ [ \Delta(2,1,2)^2:L(0,3,0)]=1.$
\vskip .15cm 
This implies that $[\Delta(2,1,2): L(0,3,0)]=1.$ Define $Q$ via the exact sequence
$$0 \to \Delta(2,1,2)^2 \to \Delta(2,1,2)\to Q \to 0,$$
which gives rise to the short exact sequence 
$$ 0 \to \Hom_{G_1}(\hQ_1(0,0,0), \Delta(2,1,2)^2)^{(-1)} \to  \Hom_{G_1}(\hQ_1(0,0,0), \Delta(2,1,2))^{(-1)}$$
$$\to  \Hom_{G_1}(\hQ_1(0,0,0), Q)^{(-1)} \to 0.$$
This sequence is equivalent to 
$$0 \to L(0,1,0) \to \Hom_{G_1}(\hQ_1(0,0,0), \Delta(2,1,2))^{(-1)} \to L(0,0,0) \to 0.$$
Its dual version is
$$0 \to L(0,0,0) \to \Hom_{G_1}(\hQ_1(0,0,0), \nabla(2,1,2))^{(-1)} \to L(0,1,0) \to 0.$$
Again we obtain a contradiction to \cite[Theorem 9.2.3]{KN}.

\subsubsection{Jantzen's Question}\label{S:JQC3} We show that the induced module $\nabla(2,1,2)$  does not afford a good $p$-filtration. 
Assume that $\nabla(2,1,2)$ has a good $p$-filtration. 
The weight $3\omega_2$ is maximal among the dominant weights of the form $3\ga$ that appear in $\nabla(2,1,2).$
From the data given in the  tables in Section \ref{S:TMCC3}, one can see that $\nabla(\om_2)^{(1)}$ has to appear at least once, possibly twice, in any good $p$-filtration of $\nabla(2,1,2).$ 
Note that $\nabla(\om_2)$ has two composition factors with highest weights $\om_2$ and $0$. 
Correspondingly, $\nabla(\om_2)^{(1)}$ will have factors $L(\om_2)^{(1)}$ (socle) and $k$ (head).  
As argued in Section \ref{S:JQB3} for type $\rm{B}_3$,  for $\nabla(2,1,2)$ to admit a good $p$-filtration, 
every appearance of $L(\om_2)^{(1)}$ must lie below an occurence of $k$ in its radical series, and, hence, in any composition series. 
Consider Cases 1 and 2 as in the argument of Section \ref{S:TMCC3}.  In the first case, $\nabla(2,1,2)$ would have two copies of $L(\om_2)^{(1)}$, but only a single copy of $k$; a
clear contradiction.  In Case 2, $k$ appears in $\Delta(2,1,2)^1$, while $L(\om_2)^{(1)}$ appears in $\Delta(2,1,2)^2$ (and only once in $\Delta(2,1,2)$). Dualizing, there exists a composition series of 
$\nabla(2,1,2)$ for which a composition factor isomorphic to $L(\om_2)^{(1)}$ appears higher than any trivial module. With this contradiction, we conclude that  $\nabla(2,1,2)$ does not afford a good $p$-filtration.

\subsubsection{} 
From the table in Section \ref{S:TMCC3}, one obtains that $[\nabla(1,1,1):k]=0$ and therefore $\Hom_{G_1}(k,  \nabla(1,1,1)/L(1,1,1))=0.$ 
From \cite[Prop. 4.1 and Section 4.2]{Jan91}, one has 
$$
\Ext_{G_1}^1(k, L(1,1,1))^{(-1)} \hookrightarrow \Ext_{G_1}^1(k, \nabla(1,1,1))^{(1)} \cong \nabla(\om_2).
$$
Furthermore, from \cite[Prop. 4.5]{Jan91}, 
$$L(\om_2) \hookrightarrow \Ext_{G_1}^1(k, L(1,1,1))^{(-1)}.$$ It follows that    $\Ext_{G_1}^1(k, L(1,1,1))^{(-1)}$ is not tilting.   

\begin{remark} More generally, consider the case of type $\rm{C}_p$ for $p \geq 3$.  The module $\nabla(\om_2)$ is not simple.   It has two composition factors: $L(\om_2)$ and the trivial module $k$.  It follows from \cite[Prop. 4.1, Section 4.2, and Prop. 4.5]{Jan91} that $\Ext_{G_1}^1(k,\nabla(p\omega_2 - \a_2))^{(-1)} \cong \nabla(\om_2)$ and one has an exact sequence
$$
0 \to (\nabla(p\om_2 - \a_2)/L(p\om_2 - \a_2))^{G_1} \to \Ext_{G_1}^1(k,L(p\om_2 - \a_2)) \to \Ext_{G_1}^1(k,\nabla(p\om_2 - \a_2)),
$$
where the image of the last map contains $L(\om_2)^{(1)}$.  By direct computation, the only dominant weight $\ga$ for which $p\ga$ is a weight of $\nabla(p\om_2 - \a_2)$ is $\ga = 0$. As such, $(\nabla(p\om_2 - \a_2)/L(p\om_2 - \a_2))^{G_1}$ is a (possibly empty) sum of trivial modules.  If this term vanishes, we would see that  $\Ext_{G_1}^1(k,L(p\om_2 - \a_2))^{(-1)}$ is not a tilting module, but we have been unable to confirm this.  We speculate that this is the case and that the TMC fails in type $\rm{C}_p$ for $p \geq 3$.  
\end{remark}


\subsection{$\Phi=\rm{D}_4$ and $p=2$}

In this section the first method introduced in Section \ref{S:1st} is used to show that the TMC fails for $\rm{D}_4$ and $p=2$. 

\subsubsection{Tilting Module Conjecture}\label{S:TMCD4} We demonstrate that the tilting module $T(2\rho)$ is not isomorphic to $\widehat{Q}_1(0)$ as a $G_1T$-module.
According to \cite[3.4(b), p. 2659]{DS} (cf. also \cite{Sin94}), 
\begin{equation}
\Ext_{G_1}^1(k, L(\omega_1 +\omega_3+ \omega_4)) \cong k  \oplus L(\omega_2)^{(1)}.
\end{equation}
Assume that the TMC holds. Then, according to Proposition~\ref{P:FirstConstruction}, 
\begin{equation}
L(\omega_2)^{(1)}\hookrightarrow \Ext_{G_1}^1(L(0), L(\omega_1 +\omega_3+ \omega_4)) \hookrightarrow \Hom_{G_1}(\hQ_1(0), \hQ_1(\omega_1 + \omega_3+ \omega_4)).
\end{equation}
Moreover, according to Theorem~\ref{T:FirstConstruction}(b), $L(\omega_2)$ must appear in the socle of a Weyl module $\Delta(\ga)$ with highest weight less than or equal to $2\rho - (\omega_1+\omega_3+\omega_4) = \rho +  \omega_2$.  Any weight $\gamma$ that is linked to $\omega_2$ and satisfies $2 \gamma \leq \rho + \omega_2$ is contained in the following list: 
$$\omega_1 + \omega_3+ \omega_4, 2\omega_1, 2\omega_3, 2\omega_4, \omega_2, 0.$$
The module $\Delta(\omega_2)$ is the dual of the adjoint representation. Its radical consists of two copies of the trivial module. The modules $\Delta(2\omega_i)$ with $i \in \{1, 3, 4\}$ are uniserial with the trivial module as their socle and a middle consisting of $L(\omega_2)$ \cite[Lemma 4.3]{Sin94}. 

Next we observe that $L(\omega_2)$ does not appear in the socle of $\Delta(\omega_1 + \omega_3 + \omega_4)$. We embed $\Delta(\omega_1+\omega_3+\omega_4)$ in $\Delta(\omega_1) \otimes\Delta(\omega_3+\omega_4)$ and show that 
$$\Hom_G(L(\omega_2), \Delta(\omega_1) \otimes \Delta(\omega_3+\omega_4))\cong \Hom_G( \nabla(\omega_3+\omega_4), L(\omega_2) \otimes \Delta(\omega_1))=0.$$ 
Note that $\Delta(\omega_1) = L(\omega_1)$ while $\nabla(\omega_3+\omega_4)$ has length two with simple head $L(\omega_1).$ Moreover, a straightforward character calculation shows that $L(\omega_2) \otimes L(\omega_1)$ has composition factors $L(\omega_1 + \omega_2) $ and $L(\omega_3 + \omega_4).$ Since  $L(\omega_1)$ does not appear as a factor, the socle of $\Delta(\omega_1 + \omega_3 + \omega_4)$ does not contain $L(\omega_{2})$, giving a contradiction to Theorem~\ref{T:FirstConstruction}.

Thus, at least one of $\widehat{Q}_1(0)$ or $\widehat{Q}_1(\omega_1 + \omega_3+ \omega_4))$ is not a tilting module. 
From character data (particularly, that the dominant weights of $\nabla(\omega_2)$ are only $\omega_2$ and 0), $T(\rho+\omega_2) \mid_{G_1T} \cong \widehat{Q}_1(\omega_1 + \omega_3 + \omega_4).$ Therefore, $T(2 \rho)$ cannot be isomorphic to $\widehat{Q}_1(0)$, as a $G_1T$-module.

\subsubsection{Jantzen's Question} \label{S:JQD4} It was shown in \cite[5.6]{BNPS19} that Conjecture~\ref{C:stla} holds in this case. Moreover, from \cite{So}  we know that an affirmative answer to Jantzen's Question (i.e., 
Question~\ref{Q:JQ}) together with Conjecture~\ref{C:stla} implies that the TMC holds.  Since the TMC fails here, one concludes that there exists a dominant weight for which Jantzen's Question does not have a positive answer.

\section{Proof of the Main Theorem} 

\subsection{Proof}  The following is a list of  groups and their root systems for which the small rank groups of Theorem \ref{T:small-rank} appear naturally as Levi subgroups 
\begin{itemize} 
\item $\rm{B}_3:$ appears in $\rm{B}_n,$ $n \geq 3,$ and in $\rm{F}_4;$
 \item $\rm{C}_3:$ appears in $\rm{C}_n,$ $n \geq 3,$ and in $\rm{F}_4;$
 \item $\rm{D}_4:$ appears in $\rm{D}_n,$ $n \geq 4,$ and in $\rm{E}_n$, $n=6, 7, 8;$
 \end{itemize}
 This allows one to apply the contrapositive of Theorem \ref{T:TMCLevi} and extend the statement concerning the TMC of Theorems \ref{T:small-rank}
 to the root systems listed in Section \ref{S:main}, for their respective primes. Similarly, the contrapositive of Theorem \ref{T:JQLevi} together with Sections \ref{S:JQB3},  \ref{S:JQC3}, and \ref{S:JQD4} immediately 
verify the part of the Main Theorem concerning JQ. 

The following section gives a more precise statement concerning the failure of the TMC. 

\subsection{Statement with Weights} Keeping track of the weights in Theorem \ref{T:small-rank} as well as including the  counterexamples  of Theorems \ref{T:Bn} and \ref{T:C3-more}, 
provides for a more extensive list of cases in which the TMC  fails. We also include the original counterexample in \cite{BNPS20}.

\begin{theorem}\label{T:main2} Let $G$ be a simple algebraic group over an algebraically closed field of characteristic $p>0$ with underlying root system $\Phi$ and $\la$ a $p$-restricted weight. Then
$$T((p-1)\rho+\lambda)|_{G_{1}T} \neq \widehat{Q}_{1}((p-1)\rho+w_{0}\lambda),$$
provided the  triple $(\Phi, p, \la)$ appears in the following list
\begin{itemize} 
\item $\Phi=\rm{B}_{n}$, $n\geq 3$, $p=2,$ and
\begin{itemize}
\item[(i)] $\langle \la, \alpha_i^{\vee} \rangle = p-1,$ for $n-2 \leq i \leq n,$ or
\item[(ii)] for some $k$ with $1 \leq k \leq n-2$, $\langle \la, \alpha_i^{\vee} \rangle = p-1,$ for $i \in\{k, k+1\},$ and $\langle \la, \alpha_i^{\vee} \rangle = 0,$
for $k+2 \leq i;$
\end{itemize}
\item $\Phi=\rm{C}_{n}$, $n\geq 3$, $p=3,$ and \\$\langle \la, \alpha_i^{\vee} \rangle = p-1,$ for $n-1 \leq i \leq n,$ and $\langle \la, \alpha_{n-2}^{\vee} \rangle \in \{p-2,p-1\};$
\item $\Phi=\rm{D}_{n}$, $n\geq 4$, $p=2,$ and $\langle \la, \alpha_i^{\vee} \rangle = p-1,$ for $n-3 \leq i \leq n$;
\item $\Phi=\rm{E}_{n}$, $n=6,7,8$, $p=2,$ and $\langle \la, \alpha_i^{\vee} \rangle = p-1,$ for $2 \leq i \leq 5$;
\item $\Phi=\rm{F}_{4}$, 
\begin{itemize}
\item[(i)] $p=2$ and  $\langle \la, \alpha_i^{\vee} \rangle = p-1,$ for $1 \leq i \leq 2,$ or
\item[(ii)] $p=3,$ $\langle \la, \alpha_i^{\vee} \rangle = p-1,$ for $2 \leq i \leq 3,$ and $\langle \la, \alpha_4^{\vee} \rangle  \in \{p-2,p-1\};$
\end{itemize}
\item $\Phi=\rm{G}_{2}$, $p=2,$ and $\la =(p-1)\rho.$ 
\end{itemize} 
\end{theorem}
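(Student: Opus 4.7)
The strategy is to derive Theorem \ref{T:main2} by combining the low-rank counterexamples already established in the paper — Theorem \ref{T:small-rank} for $\rm{B}_3$ ($p=2$), $\rm{C}_3$ ($p=3$) and $\rm{D}_4$ ($p=2$), the additional counterexamples in Theorems \ref{T:Bn} and \ref{T:C3-more}, and the $\rm{G}_2$ counterexample of \cite{BNPS20} — with the Levi-reduction mechanism of Theorem \ref{T:TMCLevi}. The contrapositive of that theorem propagates each low-rank counterexample to larger ambient groups, and Theorem \ref{T:main2} merely records which specific $p$-restricted weights $\lambda$ realize the failure of the TMC in $G$.

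First I would fix, for each root system appearing in the list, the Levi subgroup $L_J$ containing the relevant low-rank counterexample, using the inclusions recorded in the proof of Theorem \ref{T:main}: $\rm{B}_3$ as a Levi of $\rm{B}_n$ and $\rm{F}_4$, $\rm{C}_3$ as a Levi of $\rm{C}_n$ and $\rm{F}_4$, $\rm{D}_4$ as a Levi of $\rm{D}_n$ and $\rm{E}_n$. For $\rm{F}_4$ at $p=2$ I would take $J=\{\alpha_1,\alpha_2,\alpha_3\}$, which carries a $\rm{B}_3$ root system, and for $\rm{F}_4$ at $p=3$ take $J=\{\alpha_2,\alpha_3,\alpha_4\}$, which carries a $\rm{C}_3$ root system. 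The $\rm{G}_2$ line in the list is its own counterexample at $\lambda=(p-1)\rho$ and needs no reduction.

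Second, for each triple $(\Phi,p,\lambda)$ in the statement I would verify that the pairings $\langle\lambda,\alpha_i^\vee\rangle$ for $i\in J$ reproduce the $J$-coordinates of the highest weight of a known $L_J$-counterexample. The key observation is that the $L_J$-summand $T_J((p-1)\rho+\lambda)$ of $T((p-1)\rho+\lambda)|_{L_J}$ identified in the proof of Theorem \ref{T:TMCLevi} depends on $\lambda$ only through these pairings; the remaining pairings $\langle\lambda,\alpha_j^\vee\rangle$ for $j\notin J$ lie in the kernel of restriction to the $J$-coroots and play no role. So, for instance, in $\rm{B}_n$ at $p=2$ case (i) one takes $J=\{\alpha_{n-2},\alpha_{n-1},\alpha_n\}$, and the condition $\langle\lambda,\alpha_i^\vee\rangle=1$ for $i\in J$ forces $T_J((p-1)\rho+\lambda)$ to match the $\rm{B}_3$-tilting module whose indecomposability over $(L_J)_1T$ was falsified in Section \ref{S:TMCB3}; case (ii), and the option $\langle\lambda,\alpha_{n-2}^\vee\rangle=p-2$ in $\rm{C}_n$, align instead with the extra counterexamples supplied by Theorems \ref{T:Bn} and \ref{T:C3-more}. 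The $\rm{D}_n$, $\rm{E}_n$, and $\rm{F}_4$ cases follow the same template.

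The main obstacle is purely bookkeeping rather than conceptual: one must carefully check, for each root system and each source of counterexample (some proved via $\Ext^1$-obstructions as in Section \ref{S:methods}, others via Jantzen sum formula obstructions in the radical filtration), that the arithmetic conditions listed in Theorem \ref{T:main2} are precisely those that make the $L_J$-summand of $T((p-1)\rho+\lambda)$ reproduce a previously-exhibited counterexample. Once that matching is confirmed, the weight-tracking version of Theorem \ref{T:TMCLevi} implicit in its proof — which identifies $T_J((p-1)\rho+\lambda)$ with the appropriate summand of $T((p-1)\rho+\lambda)|_{L_J}$ and pairs it with $\widehat{Q}_{J,1}((p-1)\rho+w_{J,0}\lambda)$ — yields the inequality $T((p-1)\rho+\lambda)|_{G_1T}\neq\widehat{Q}_1((p-1)\rho+w_0\lambda)$ for each admissible $\lambda$ in the list.
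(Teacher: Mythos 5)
Your proposal is correct and follows essentially the same approach as the paper: for each listed root system you restrict to the Levi subgroup containing the relevant low-rank base case (Theorem~\ref{T:small-rank}, Theorem~\ref{T:Bn}, Theorem~\ref{T:C3-more}, or the $\rm{G}_2$ example of \cite{BNPS20}), observe that the $L_J$-summand $T_J((p-1)\rho+\lambda)$ and its $(L_J)_1T$-decomposability depend only on the $J$-pairings of $\lambda$, and then apply the contrapositive of Theorem~\ref{T:TMCLevi}. The paper's proof is just the explicit bookkeeping you outline, including the $\rm{F}_4$ ordering swap for the type-$\rm{C}_3$ subsystem when $p=3$.
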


\begin{proof}  For type $\rm{B}_n$ and $p = 2$, when $n = 3$, case (i) follows from  Section \ref{S:TMCB3}, where $\la = \rho = (1,1,1)$.  For $n > 3$, one may consider the Levi subgroup $L_J$ associated to 
$J = \{\a_{n-2},\a_{n-1},\a_{n}\}$. Let $\la = (*,\dots,*,1,1,1)$ be any $p$-restricted weight.  Then the TMC fails for $T_J((p-1)\rho_J + \la)$ over $L_J$. Applying the contrapositive of Theorem \ref{T:TMCLevi}, 
the TMC must fail for $T((p-1)\rho + \la)$ over $G$.   In case (ii), for any $n \geq 3$, the base case of $\la = (1,1,0,\dots,0)$ follows from Theorem \ref{T:Bn} (below).  
For $n > 4$ and $\la = (*,\dots,*,1,1,0,\dots,0)$, where the 1s lie in the $k$th and $(k+1)$st spots (with $k \leq n-2$), 
one uses a Levi subgroup $L_J$ associated to $J = \{\a_k,\a_{k+1},\dots,\a_n\}$.  

For type $\rm{C}_n$ and $p = 3$, when $n = 3$, one base case is $\la = (2,2,2)$, given in Section \ref{S:TMCC3}.  For $n > 3$, the case of $\la = (*,\dots,*,2,2,2)$ follows by using the Levi subgroup $L_J$ associated to $J = \{\a_{n-2},\a_{n-1},\a_n\}$.   The second type $\rm{C}_3$ base case of $\la = (1,2,2)$ is given in Theorem \ref{T:C3-more} (below), from which the general case of $\la = (*,\dots,*,1,2,2)$ similarly follows.

For type $\rm{D}_n$ and $p = 2$, when $n = 4$, the base case of $\la = \rho = (1,1,1,1)$ is given in Section \ref{S:TMCD4}.  For $n > 4$, the case of $\la = (*,\dots,*,1,1,1,1)$ follows using a Levi subgroup $L_J$ associated to $J = \{\a_{n-3},\a_{n-2},\a_{n-1},\a_n\}$.  This type $\rm{D}_4$ case also gives the type $\rm{E}_n$ cases by using a Levi subgroup $L_J$ associated to $J = \{\a_2,\a_3,\a_4,\a_5\}$.

For type $\rm{F}_4$ and $p = 2$, we start with the base case of $\la = (1,1,0)$ for a type $\rm{B}_3$ group, as above.  Using a Levi subgroup $L_J$ associated to $J = \{\a_1,\a_2,\a_3\}$ gives the $\rm{F}_4$ case of $\la = (1,1,*,*)$.  Note that starting with the $\la = (1,1,1)$ case for type $\rm{B}_3$ does not generate any additional examples in type $\rm{F}_4$.   For $p = 3$, we use the two type $\rm{C}_3$ cases of $\la = (2,2,2)$ or $\la = (1,2,2)$ and a Levi subgroup associated to $J = \{\a_2,\a_3,\a_4\}$ to give the type $\rm{F}_4$ cases of $\la = (*,2,2,2)$ or $(*,2,2,1)$, respectively.  Note the ordering swap when changing from type $\rm{C}_3$ to $\rm{F}_4$, as the root $\a_2$ is the long root, whereas $\a_3$, $\a_4$ are the short roots in the type $\rm{C}_3$ subroot system.  

Lastly, the type $\rm{G}_2$ case follows from \cite{BNPS20}.

\end{proof}

\begin{remark} For all pairs $(\Phi, p)$ that appear above  one always has 
$$T(2(p-1)\rho)|_{G_{1}T} \ncong \widehat{Q}_{1}(0).$$
\end{remark}

\subsection{Remarks about the Proof} Our goal was to keep the calculations for 
the low rank groups in Section \ref{S:Low} fairly self-contained. We make use of Ext-data that appears  in the literature, mainly due to Jantzen  \cite{Jan91}, and
 to Dowd and Sin \cite{DS}. In addition, we apply Weyl's character formula, via the computer algebra package  \cite{LiE}, and perform explicit 
  calculations of the multiplicities appearing in the  Jantzen filtration.  Some of these calculations are 
  obtained via a short computer program written for LiE. However, we owe a debt of gratitude to  Frank L\"ubeck for his tables of weight multiplicites  \cite{Lue} and to Stephen
   Doty for his \cite{GAP} package WeylModules \cite{Doty}. These enabled us to look 
   at various examples and observe many of the phenomena that are described in the paper.


\section{More Counterexamples}
\subsection{Type $\rm{B}_n$ Revisited}
With the use of Theorem \ref{T:TMCLevi} and Levi subgroups, we have already observed in Theorem \ref{T:main2} that the TMC fails for type $\rm{B}_n$, $n \geq 3,$, and $p = 2$, based on a counterexample in type $\rm{B}_3$. 
We present here a direct proof of the failure of the TMC for all $n \geq 3$, noting that these counterexamples do not arise from Levi subgroups, thus demonstrating further subtleties in the question of when the TMC holds.   Here we will make use of the construction outlined in Section \ref{S:2nd}.  As observed in Theorem \ref{T:main2}, Levi subgroups may also be used to generate further examples from these examples.  

A key result that will be used involves having some information about the socles of Weyl modules. The proof of the next proposition is technical and will be provided in Section \ref{S:compfactors}.

\begin{prop}\label{P:compfactors} Let $G$ be of type $\rm{B}_n,$ $n \geq 3$, and $p=2.$
Let $\sigma \in X^+$. If $\sigma < \rho -\omega_1$ and $L(\s)$ appears as a composition factor of $\nabla(\rho - \omega_1)$ then $\sigma < \rho - \omega_1 -\omega_2.$
\end{prop}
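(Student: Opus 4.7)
The plan is to directly identify and rule out the possible composition factors $L(\sigma)$ of $\nabla(\rho-\omega_1)$ whose highest weights $\sigma$ lie in the half-open interval $[\rho-\omega_1-\omega_2,\,\rho-\omega_1)$. Any dominant $\sigma$ in this interval may be written $\sigma = (\omega_3+\omega_4+\cdots+\omega_n)+\eta$ with $\eta \in \mathbb{N}\Phi^+$ and $\omega_2 - \eta \in \mathbb{N}\Phi^+\setminus\{0\}$. Using the explicit expansion $\omega_2 = \alpha_1 + 2\alpha_2 + 2\alpha_3 + \cdots + 2\alpha_n$ in $\rm{B}_n$, the $\alpha_i$-coefficient of $\eta$ is bounded by $1$ for $i=1$ and by $2$ for $2\le i\le n$, so the list of candidate weights is short and finite. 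Imposing dominance $\langle\sigma,\alpha_i^\vee\rangle\ge 0$ further trims this list.

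Next, I would replace $\nabla(\rho-\omega_1)$ with $\Delta(\rho-\omega_1)$ via $\nabla(\lambda) \cong {}^{\tau}\Delta(\lambda)$, so that composition multiplicities coincide, and reduce to bounding $[\Delta(\rho-\omega_1)^1:L(\sigma)]$ for each candidate, since $\Delta(\rho-\omega_1)/\Delta(\rho-\omega_1)^1 \cong L(\rho-\omega_1)$. The Jantzen sum formula from Section~\ref{S:Ja-Fil},
\[
\sum_{i>0}\operatorname{ch}\Delta(\rho-\omega_1)^i = \sum_{\alpha\in\Phi^+}\sum_{0<2m<\langle 2\rho-\omega_1,\alpha^\vee\rangle}\nu_2(2m)\,\chi(s_{\alpha,2m}\cdot(\rho-\omega_1)),
\]
combined with the strong linkage principle at $p=2$ restricting admissible $\sigma$ to the linkage class of $\rho-\omega_1$, produces an upper bound on that multiplicity in terms of the $\chi(\sigma)$-contributions on the right-hand side. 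Showing that these contributions vanish for each candidate $\sigma$ in the interval rules $L(\sigma)$ out as a composition factor of $\Delta(\rho-\omega_1)$, and hence of $\nabla(\rho-\omega_1)$.

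The main obstacle is the combinatorial bookkeeping of the Jantzen sum over all positive roots of $\rm{B}_n$: for each $\alpha\in\Phi^+$ one must determine the admissible levels $m$ with $0<2m<\langle 2\rho-\omega_1,\alpha^\vee\rangle$ and then compute the dominant $W$-conjugate of the reflected weight $s_{\alpha,2m}\cdot(\rho-\omega_1)$, deciding whether it lands in the narrow interval in question. Lemma~\ref{L:wmu} is the key tool for passing from $W$-conjugates to $W_J$-conjugates when the relevant difference lies in $\mathbb{N}J$ for a proper parabolic subset $J$, thereby cutting down the case analysis substantially. The base case $n=3$ can be verified by direct enumeration (for instance via the weight-multiplicity tables of L\"ubeck or the \texttt{WeylModules} package mentioned in the paper), and the general case then follows by a uniform extension of that computation to arbitrary $n\ge 3$, since the additional positive roots appearing for larger $n$ pair with $2\rho-\omega_1$ in a predictably bounded manner.
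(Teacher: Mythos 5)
The overall strategy — pass to $\Delta(\rho-\omega_1)$ via the $\tau$-functor and use the Jantzen sum formula — is the same as the paper's, but your execution has two genuine gaps. First, the logic of the reduction is flawed: since $\leq$ is only a partial order, the negation of ``$\sigma < \rho-\omega_1-\omega_2$'' is not ``$\sigma \geq \rho-\omega_1-\omega_2$''. Your enumeration $\sigma = (\omega_3+\cdots+\omega_n) + \eta$ with $0 \leq \eta < \omega_2$ covers only the weights in the genuine interval $[\rho-\omega_1-\omega_2,\,\rho-\omega_1)$ and omits every $\sigma < \rho-\omega_1$ that is \emph{incomparable} with $\rho-\omega_1-\omega_2$ (e.g.\ $\sigma$ with $\rho - \omega_1 - \sigma = 2\alpha_1$, where $2\alpha_1 - \omega_2$ has mixed signs). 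Those weights must also be ruled out, and your scheme silently passes over them. Second, the assertion that ``the general case then follows by a uniform extension'' of the $n=3$ computation is not substantiated and is in fact the hard part. In the paper's proof the contribution from each family of positive roots of $\rm B_n$ must be analyzed separately; for $\alpha = \ep_i\pm\ep_j$ ($i>1$) and $\alpha = \ep_i$ ($i>1$) one gets vanishing via a direct criterion (Lemma~\ref{L:chivanishing}), but for $\alpha = \ep_1$ and $\alpha = \ep_1 + \ep_j$ the terms do \emph{not} all vanish: one needs global pairing/cancellation arguments (pairing the levels $2m$ and $4(n-1)-2m$, and pairing the indices $j$ and $n+2-j$) that depend on $n$ in a way that does not reduce to the $n=3$ case.

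Two further notes. You cite Lemma~\ref{L:wmu} as the key tool; that lemma is not used here — the relevant vanishing tools are Lemma~\ref{L:chifacts} and Lemma~\ref{L:chivanishing} (the coincidence-of-entries criterion in the $\ep$-basis). And the paper does not enumerate candidate $\sigma$ at all: it proves the stronger Proposition~\ref{P:Deltastructure}, computing the entire Jantzen sum $\sum_{i>0}\operatorname{ch}\Delta(\rho-\omega_1)^i$ as a combination of $\chi(\rho-\omega_m-\omega_{m+1})$ for $m\geq 2$ even, from which the bound on composition factors falls out immediately because all surviving $\chi$'s have highest weight below $\rho-\omega_1-\omega_2$. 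Computing the sum in closed form avoids the enumeration issues entirely and is the natural way to make the argument uniform in $n$.
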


With Proposition~\ref{P:compfactors}, one can produce new counterexamples to the TMC for type $\rm{B}_n$, $n\geq 3$ and $p=2$. 

\begin{theorem}\label{T:Bn}
Let $G$ be a simple algebraic group of type  $\rm{B}_n$, $n \geq 3,$ and $p=2$. The tilting module $T(\rho+ \omega_1 + \omega_2)$ is not isomorphic to $\widehat{Q}_1(\rho-\omega_1-\omega_2)$ as a $G_1T$-module.
\end{theorem}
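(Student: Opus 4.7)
The plan is to apply the Second Method of Section~\ref{S:2nd}, combining Proposition~\ref{P:SecondConstruction2} with Theorem~\ref{T:SecondConstruction}(b). Set $\lambda=\rho-\omega_1-\omega_2=\omega_3+\omega_4+\cdots+\omega_n$ and $\mu=\rho-\omega_1=\omega_2+\omega_3+\cdots+\omega_n$, both in $X_1$, and take $i=1$. Using $\alpha_1=2\omega_1-\omega_2$ one checks immediately that $\lambda+p\omega_1=\mu+\alpha_1$ and $\langle\lambda,\alpha_1^\vee\rangle=0$, so Proposition~\ref{P:SecondConstruction2} yields a $G$-module embedding
\[
\nabla(\omega_1)\hookrightarrow \Ext_{G_1}^1(L(\lambda),\nabla(\mu))^{(-1)}.
\]

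The next step is to verify the hypothesis $\Hom_{G_1}(Q_1(\lambda),\nabla(\mu))=0$ of Theorem~\ref{T:SecondConstruction}. A $G_1$-composition factor of $\nabla(\mu)$ isomorphic to $L(\lambda)$ must come from a $G$-composition factor $L(\sigma)$ with $\sigma=\lambda+2\sigma_1$, $\sigma_1\in X^+$, and the condition $\sigma\le\mu$ translates into $2\sigma_1\le\omega_2$ in the dominance order. Expanding $\omega_2=\alpha_1+2\alpha_2+\cdots+2\alpha_n$ in the simple-root basis, a short computation forces $\sigma_1=0$, so the only candidate is $\sigma=\lambda$. Proposition~\ref{P:compfactors} rules this out, since $L(\rho-\omega_1-\omega_2)$ is not a composition factor of $\nabla(\rho-\omega_1)$. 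Assuming now that the TMC holds, Theorem~\ref{T:SecondConstruction}(b) applied to the simple socle $L(\omega_1)\hookrightarrow\nabla(\omega_1)$ forces $L(\omega_1)$ to embed into some Weyl module $\Delta(\gamma)$ with $2\gamma\le 2(p-1)\rho-\lambda+w_0\mu=2\omega_1+\omega_2$ (using $w_0=-1$ in $B_n$).

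To close out the contradiction, I would enumerate all dominant $\gamma$ satisfying $\omega_1\le\gamma$ and $2\gamma\le 2\omega_1+\omega_2$. Writing $\gamma=\omega_1+\eta$ with $\eta=\sum n_j\alpha_j\in\mathbb{N}\Delta$, the bound $2\eta\le\omega_2$ gives $n_1=0$ and $n_j\in\{0,1\}$ for $j\ge 2$, while the dominance of $\gamma$ (translated into the Cartan-matrix inequalities $n_3\le 2n_2$, $n_{k+1}\le 2n_k-n_{k-1}$ for $3\le k\le n-2$, $n_n\le 2n_{n-1}-n_{n-2}$, and $n_n\ge n_{n-1}$) forces either $\eta=0$ or $\eta=\alpha_2+\alpha_3+\cdots+\alpha_n$, i.e.\ $\gamma=\omega_1$ or $\gamma=\omega_2$. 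Lemma~\ref{L:Data}(a)--(c) then shows that the socles of $\Delta(\omega_1)$ and $\Delta(\omega_2)$ are each a single copy of the trivial module, so $L(\omega_1)$ does not embed into either; this is the desired contradiction. Hence the TMC must fail for at least one of the two $p$-restricted weights whose $\widehat{Q}_1$'s were implicitly lifted in the Second Method, namely $\omega_1+\omega_2$ or $\omega_1$. Since $\nabla(\omega_1)$ has only the two dominant weights $\omega_1$ and $0$ (Lemma~\ref{L:Data}(a)), a character comparison analogous to the one closing Section~\ref{S:TMCD4} for $\rm{D}_4$ identifies $T(\rho+\omega_1)|_{G_1T}\cong\widehat{Q}_1(\rho-\omega_1)$, so the failure must be precisely $T(\rho+\omega_1+\omega_2)\ncong\widehat{Q}_1(\rho-\omega_1-\omega_2)$, as claimed.

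The main obstacle will be the Hom-vanishing step, which rests on the nontrivial structural statement Proposition~\ref{P:compfactors} (proved separately) together with the simple-root computation excluding $\sigma_1\neq 0$. The subsequent narrowing down to the specific tilting module $T(\rho+\omega_1+\omega_2)$ also requires a character-theoretic identification, but given how thin the composition series of $\nabla(\omega_1)$ is, this is a routine adaptation of the $\rm{D}_4$ argument.
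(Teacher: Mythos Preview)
Your proposal is correct and follows essentially the same route as the paper's own proof: apply Proposition~\ref{P:SecondConstruction2} with $\lambda=\rho-\omega_1-\omega_2$, $\mu=\rho-\omega_1$, $i=1$; use Proposition~\ref{P:compfactors} to obtain the Hom-vanishing needed for Theorem~\ref{T:SecondConstruction}(b); rule out the two candidate Weyl modules $\Delta(\omega_1)$ and $\Delta(\omega_2)$ via Lemma~\ref{L:Data}; and finally invoke the character identification $T(\rho+\omega_1)\vert_{G_1T}\cong\widehat{Q}_1(\rho-\omega_1)$ to pin the failure on $T(\rho+\omega_1+\omega_2)$. The only differences are expository: you spell out the reduction to $\sigma_1=0$ in the Hom-vanishing step and carry out the root-by-root enumeration of the admissible $\gamma$, whereas the paper records these conclusions more tersely.
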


\begin{proof}  Assume that the TMC holds for the tilting module $T(\rho+ \omega_1 + \omega_2)$.
Set $\la = \rho -\omega_1-\omega_2$ and $\mu = \rho - \omega_1.$ Note that $\la + 2 \omega_1= \mu + \alpha_1$ and that $\langle \la, \alpha_1^{\vee} \rangle =0.$ 
It follows from Proposition~\ref{P:SecondConstruction2} that 
$$L(\omega_1) \hookrightarrow \nabla (\omega_1) \hookrightarrow \Ext_{G_1}^1(L(\la), \nabla(\mu))^{(-1)}.$$
In addition, Proposition~\ref{P:compfactors} implies that $\Hom_{G_1}(\hQ_1(\la), \nabla(\mu))=0.$ From
Proposition~\ref{P:SecondConstruction} one concludes that 
$$L(\omega_1) \hookrightarrow \Hom_{G_1}(\hQ_1(\la), \hQ_1(\mu))^{(-1)}.$$
Using Theorem~\ref{T:SecondConstruction}, one concludes that 
$L(\omega_1)$ is a submodule of some $\Delta(\ga)$ with $2\ga \leq 2 \rho -(\rho - \omega_1)-(\rho-\omega_1 - \omega_2)= 2 \omega_1 + \omega_2.$ The only possibilities for $\ga$ with $2\omega_1 \leq 2\ga \leq 2\omega_1 + \omega_2$ are $\omega_1$ and $\omega_2.$  Now Lemma~\ref{L:Data} implies
 that the corresponding Weyl modules $\Delta(\omega_1)$, and $\Delta(\omega_2)$ both have the trivial module as their simple socle,  a contradiction. It follows from character data (the only dominant weights of $\nabla(\omega_1)$ are $\omega_1$ and $0$) that $T(\rho + \omega_1)$ is isomorphic to 
 $\widehat{Q}_1(\rho-\omega_1)$ as a $G_{1}T$-module.  From this argument, we can conclude that $T(\rho+ \omega_1 + \omega_2)$ is not isomorphic $\widehat{Q}_1(\rho-\omega_1-\omega_2)$ as a $G_1T$-module.
\end{proof} 

\subsection{} To prove Proposition \ref{P:compfactors}, we will consider the Jantzen Filtration on $\Delta(\rho - \omega_1)$.  
Recall from Section \ref{S:Ja-Fil} that for any $\la \in X^+$
$$
\sum_{i > 0} \operatorname{ch} \Delta(\la)^i = \sum_{\a \in \Phi^+}\sum_{0 < mp < \langle\la + \rho,\a^{\vee}\rangle} \nu_p(mp)\chi(s_{\a,mp}\cdot \la).
$$
The goal is to show that (for $\la = \rho - \omega_1$) most of the $\chi(s_{\a,mp}\cdot\la)$ appearing in the sum in fact vanish.   

We first recall some general facts about the Euler characteristic $\chi(\mu)$ for a weight $\mu$, which is defined as
$$\chi(\mu) = \sum_{\i \ge 0} (-1)^i \text{ch} (R^i \ind_B^G \mu).$$ 
The statements and the proofs can be found in \cite[II 5.4, 5.9, 8.19]{rags}. 

\begin{lemma}\label{L:chifacts} Let $\mu \in X$.  
\begin{itemize}
\item[(a)] If there exists $\a \in \Delta$ with $\langle\mu,\a^{\vee}\rangle = -1$, then $\chi(\mu) = 0$.
\item[(b)] For $w \in W$, $\chi(w\cdot \mu) = (-1)^{\ell(w)}\chi(\mu)$.
\item[(c)] In particular, for $\a \in \Delta$, $\chi(s_{\a}\cdot \mu) = -\chi(\mu)$.
\item[(d)] For $\a \in \Phi^+$, $\chi(s_{\a,mp}\cdot \la) = -\chi(\la - mp\a)$.
\end{itemize}
\end{lemma}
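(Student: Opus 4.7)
The plan is to derive (b) first as the main alternating-sign property from the Weyl character formula (extended to arbitrary weights via the standard convention that $\chi(\mu)$ for non-dominant $\mu$ is defined by the same alternating-sum expression $\sum_{w \in W} (-1)^{\ell(w)} e^{w(\mu+\rho)} / \sum_{w \in W}(-1)^{\ell(w)} e^{w\rho}$), and then deduce (a), (c), (d) as consequences. This is the approach taken in \cite[II.5]{rags}.

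For (b), I would substitute $w \cdot \mu = w(\mu+\rho) - \rho$ into the definition of $\chi$ and reindex the sum in the numerator by replacing $w'$ with $w' w^{-1}$; the factor $(-1)^{\ell(w)}$ pops out from the length identity $\ell(w'w^{-1}) \equiv \ell(w') + \ell(w) \pmod 2$ (which one verifies from the sign character of $W$). Part (c) is then the instance $w = s_\alpha$, since $\ell(s_\alpha)=1$ for $\alpha \in \Delta$.

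Next, for (a), I would compute directly that
\begin{equation*}
s_\alpha \cdot \mu = s_\alpha(\mu+\rho) - \rho = \mu - \bigl(\langle \mu, \alpha^\vee\rangle + 1\bigr)\alpha,
\end{equation*}
so that the hypothesis $\langle\mu,\alpha^\vee\rangle = -1$ gives $s_\alpha \cdot \mu = \mu$. Applying (c) then yields $\chi(\mu) = -\chi(\mu)$, forcing $\chi(\mu) = 0$.

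Finally, for (d), the key step is to show that the affine reflection $s_{\alpha,mp}$ acts on $\lambda$ (under the dot action) by the identity
\begin{equation*}
s_{\alpha,mp}\cdot \lambda = s_\alpha \cdot (\lambda - mp\alpha).
\end{equation*}
This is a direct computation: both sides equal $\lambda - \bigl(\langle \lambda+\rho,\alpha^\vee\rangle - mp\bigr)\alpha$. Applying (c) to the right-hand side then gives $\chi(s_{\alpha,mp}\cdot\lambda) = -\chi(\lambda - mp\alpha)$, as required. No part is genuinely difficult; the only mildly subtle point is the bookkeeping with the affine reflection in (d), and with the length function $\ell$ modulo $2$ in (b).
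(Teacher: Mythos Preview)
Your argument is essentially correct and matches the standard proof, which is exactly what the paper is pointing to: the paper does not give its own proof of this lemma but simply refers the reader to \cite[II.5.4, 5.9, 8.19]{rags}. So there is no difference in approach to compare.

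One small imprecision: in part (d) you write ``Applying (c) to the right-hand side'', but (c) as stated applies only to simple roots $\alpha\in\Delta$, whereas (d) concerns an arbitrary $\alpha\in\Phi^+$. The fix is to invoke (b) instead, together with the observation that for any root $\alpha$ the reflection $s_\alpha$ has $(-1)^{\ell(s_\alpha)}=\det(s_\alpha)=-1$ (the sign character of $W$ agrees with the determinant on the reflection representation), so $\ell(s_\alpha)$ is always odd. Your computation verifying $s_{\alpha,mp}\cdot\lambda = s_\alpha\cdot(\lambda-mp\alpha)$ is correct for arbitrary $\alpha\in\Phi^+$, so once this citation is adjusted the argument goes through.
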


\subsection{} We next identify a condition for vanishing of $\chi(\mu)$ in type $\rm{B}_n$ that will be used repeatedly in Section \ref{S:compfactors}.  The condition is stated in terms of the epsilon-basis for weights. To this point in the paper, per standard convention, weights have been given in the omega-basis, that is, expressing a weight as a linear combination of the fundamental dominant weights $\omega_i$.  By the epsilon-basis, we mean expressing a weight in terms of the standard basis vectors $\epsilon_i$ of the underlying Euclidean space $\mathbb{E}$. For clarity in this section, we will be explicit about which basis is being used.  When weights are written in component notation, let $\mu_{\ep}$ denote the weight $\mu$ in the epsilon-basis and $\mu_{\omega}$ denote the weight the omega-basis. More precisely, for $\mu \in X$, the notation $\mu_{\ep} = (c_1, c_2, \dots, c_n)$ for integers $\{c_s\}$ means $\mu = \sum_{s = 1}^nc_s\ep_s$, whereas $\mu_{\omega} = (c_1, c_2, \dots, c_n)$ for integers $\{c_s\}$ means $\mu = \sum_{s = 1}^nc_s\omega_s$. For example, in type $B_3$, $\rho = \frac52\epsilon_1 + \frac32\epsilon_2 + \frac12\epsilon_3 = \omega_1 + \omega_2 + \omega_3 $, and so we write $\rho_{\epsilon} = (\frac52,\frac32,\frac12)$ and $\rho_{\omega} = (1,1,1)$.  
In the epsilon-basis, the positive roots for $\Phi=\rm{B}_{n}$ consist of $\{\ep_i + \ep_j, 1 \leq i < j \leq n\}$, $\{\ep_i - \ep_j, 1 \leq i < j \leq n\}$, and $\{\ep_i, 1 \leq i \leq n\}$. 

\begin{lemma}\label{L:chivanishing} Assume that the root system $\Phi$ is of type $\rm{B}_n$ or $\rm{C}_n$.   Let $\mu \in X$ with $\mu + \rho = \sum_{s = 1}^nm_s\ep_s$ for integers $\{m_s\}$.  If there exist $1 \leq i < j \leq n$ with $|m_i| = |m_j|$, then $\chi(\mu) = 0$.
\end{lemma}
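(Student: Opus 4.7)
The plan is to exploit Lemma \ref{L:chifacts}(b), which says $\chi(w\cdot\mu) = (-1)^{\ell(w)}\chi(\mu)$ for any $w \in W$. Recall that the dot action satisfies $w\cdot\mu = w(\mu+\rho)-\rho$, so if I can find $w \in W$ of \emph{odd} length that fixes $\mu+\rho$, then $\chi(\mu) = -\chi(\mu)$ and hence $\chi(\mu)=0$ (we are in characteristic $p$ but this is a character-level identity in $\mathbb{Z}[X]$).

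The first step is to recall the description of the Weyl group in types $\rm{B}_n$ and $\rm{C}_n$: $W$ is the hyperoctahedral group acting on the $\ep$-basis by signed permutations. In particular, for any $1\leq i < j \leq n$, both $\ep_i - \ep_j$ and $\ep_i + \ep_j$ are roots, so the reflections $s_{\ep_i-\ep_j}$ (which swaps $\ep_i \leftrightarrow \ep_j$) and $s_{\ep_i+\ep_j}$ (which sends $\ep_i \mapsto -\ep_j$, $\ep_j \mapsto -\ep_i$) both lie in $W$. Each of these is a reflection, so has odd length.

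Now split into two subcases based on the hypothesis $|m_i|=|m_j|$:
\begin{itemize}
\item If $m_i = m_j$, then $s_{\ep_i-\ep_j}$ swaps the $i$ and $j$ coordinates of $\mu+\rho$ and hence fixes $\mu+\rho$.
\item If $m_i = -m_j$, then $s_{\ep_i+\ep_j}$ sends $m_i\ep_i + m_j\ep_j = m_i\ep_i - m_i\ep_j$ to $-m_i\ep_j + m_i\ep_i$, which is the same weight, while fixing all other $\ep_k$ coordinates. So $s_{\ep_i+\ep_j}$ fixes $\mu+\rho$.
\end{itemize}
(The degenerate case $m_i=m_j=0$ is covered by the first subcase.) In either subcase we have produced a length-one element $w \in W$ with $w(\mu+\rho)=\mu+\rho$, and applying Lemma \ref{L:chifacts}(b) gives $\chi(\mu) = (-1)\chi(\mu)$, so $\chi(\mu)=0$.

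There is no real obstacle here beyond verifying that the two candidate reflections really are in the Weyl group for types $\rm{B}_n$ and $\rm{C}_n$ (true because $\pm\ep_i\pm\ep_j$ are roots in both types) and checking the two stabilizer computations, both of which are one-line calculations in the $\ep$-basis. An alternative phrasing would appeal to Lemma \ref{L:chifacts}(a) after applying a suitable $W$-translate to bring $\mu+\rho$ into a position where some simple coroot pairs with it to give $-1$; but the direct stabilizer argument above is cleaner and handles both signs uniformly.
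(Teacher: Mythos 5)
Your proof is correct, and it takes a slightly different route than the paper's. The paper observes that the hypothesis forces $\langle \mu+\rho,\beta^\vee\rangle = 0$ for some root $\beta = \ep_i \pm \ep_j$, conjugates by a $w \in W$ so that $w\beta$ is simple, deduces $\langle w\cdot\mu,\alpha^\vee\rangle = -1$ for a simple root $\alpha$, and then applies Lemma~\ref{L:chifacts}(a) and (b). You instead exhibit a reflection $s_\beta$ that stabilizes $\mu + \rho$, use the standard fact that reflections have odd length (since $(-1)^{\ell(s_\beta)} = \det(s_\beta) = -1$), and conclude $\chi(\mu) = -\chi(\mu) = 0$ directly from Lemma~\ref{L:chifacts}(b). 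Both arguments ultimately exploit that $\mu+\rho$ lies on a reflection hyperplane; yours is a bit more self-contained in that it bypasses part (a) and avoids the detour through $W$-conjugating a root to a simple one (which implicitly uses that every root is $W$-conjugate to a simple root). Your stabilizer computations for the two cases $m_i = m_j$ and $m_i = -m_j$ are correct, and in both cases $\beta = \ep_i \mp \ep_j$ is indeed a root of $\rm{B}_n$ and of $\rm{C}_n$, so the argument works uniformly in both types as claimed.
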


\begin{proof} Let $\Phi$ be a root system of type $\rm{B}$ or $\rm{C}$. Note that the condition  on $\mu + \rho$ implies that there is a root $\beta \in \Phi$ such that $\langle \mu + \rho, \beta^{\vee} \rangle =0$  (e.g., $\beta = \epsilon_i \pm \epsilon_j$ as appropriate). Using the $W$-invariance of the inner product we can find $w \in W$ and $\alpha \in \Delta$ such that $\langle w \cdot \mu , \alpha^{\vee} \rangle =-1.$ It follows from Lemma \ref{L:chivanishing}(a) and (b) that $\chi(\mu) = 0$.

\end{proof}

\subsection{Proof of Proposition \ref{P:compfactors}}\label{S:compfactors} We are now ready to compute the Jantzen Sum Formula for $\Delta(\rho - \omega_1)$ in type $\rm{B}_n$ and $p = 2$. Proposition \ref{P:compfactors} will follow from the result below, since $\rho-\omega_m - \omega_{m + 1}< \rho-\omega_{1}-\omega_{2}$ for $m\geq 2$. 

\begin{prop}\label{P:Deltastructure}  Let $\Phi$ be of type $\rm{B}_n$ with $n \geq 3$ and $p = 2$.  In the Jantzen Sum Formula, 
$$\sum_{i >0}\operatorname{ch}\Delta(\rho - \omega_1)^i = \sum m_{\mu}\chi(\mu),
$$
for integers $m_{\mu}$, where $\mu = \rho - \omega_m - \omega_{m + 1}$ for $m$ being even and at least 2.  
\end{prop}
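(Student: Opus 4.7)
The plan is to apply the Jantzen Sum Formula (Section~\ref{S:Ja-Fil}) to $\la = \rho - \omega_1$ and systematically eliminate almost all terms via Lemma~\ref{L:chivanishing}. The key structural observation is that in the $\ep$-basis, $\la + \rho = (2n-2,\, 2n-3,\, 2n-5,\, \ldots,\, 3,\, 1)$: the first coordinate $2n-2$ is the unique even entry, while the remaining $n-1$ entries are distinct positive odd integers. Since $2m\a$ has integer coordinates for every positive root $\a$, this parity pattern is preserved coordinate-wise and drives the entire vanishing analysis.

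The first phase is to show that three families of roots contribute only vanishing terms: $\a = \ep_i - \ep_j$ for all $1 \le i < j \le n$; $\a = \ep_i$ for $i \ge 2$; and $\a = \ep_i + \ep_j$ for $2 \le i < j \le n$. For each such pair $(\a,m)$, a short case analysis on the signs of the modified coordinates $a_i - 2m$ (and $a_j - 2m$) exhibits a valid index $k \notin \{1,i,j\}$ with $|a_i - 2m| = a_k$ or $|a_j - 2m| = a_k$, and Lemma~\ref{L:chivanishing} gives $\chi = 0$. The only surviving contributions then come from $\a = \ep_1$ with $m \in \{1,\ldots,2n-3\}\setminus\{n-1\}$ and from $\a = \ep_1 + \ep_j$ for $2 \le j \le n$ with $m = 2n - 2j + 1$, where the excluded values are precisely those making the first coordinate zero (which forces $\chi = 0$ for a different reason).

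For the surviving terms I would parametrize $|a_1 - 2m| = 2s$ with $s \in \{1,\ldots,n-2\}$ and sort the multiset of absolute values to read off the dominant weight. Since the modified $\ep_j$-coordinate carries value $\pm a_j$, both families produce the same multiset $\{2s,\, a_2,\, a_3,\, \ldots,\, a_n\}$, yielding a common dominant weight $\sigma = \rho - \omega_r - \omega_{r+1}$ with $r = n - s - 1$. The sign of each contribution is then computed from the length of the element of $W(B_n) = S_n \ltimes \{\pm 1\}^n$ needed to reorder $\la + \rho - 2m\a$ into dominant form.

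The final step is the sign-and-$\nu_2$ bookkeeping. The two $\ep_1$-contributions (at $m = n-1 \pm s$) combine into a multiple of $\chi(\sigma)$ proportional to $\nu_2(n-1-s) - \nu_2(n-1+s)$, while the two possible $\ep_1 + \ep_j$-contributions (at $j = (n+2 \pm s)/2$) exist exactly when $n+s$ is even, and combine into the negative of the $\ep_1$-contribution. Since $r$ is odd precisely when $n+s$ is even, the two families cancel identically for odd $r$, and only the $\ep_1$-family survives for even $r$; the constraint $r \ge 2$ is automatic since $s = n-2$ produces the (canceled) case $r = 1$. The main technical obstacles will be the subcase analysis for $\a = \ep_i + \ep_j$ with $i \ge 2$, which involves the four sign-subcases for $(a_i - 2m,\, a_j - 2m)$ and a delicate range check in the subcase where both are negative, and the uniform sign tracking across both families to confirm the cancellations hold for all $n \ge 3$.
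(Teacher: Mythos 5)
Your proposal follows essentially the same route as the paper: compute $(\lambda+\rho)_\varepsilon = (2n-2, 2n-3, 2n-5, \dots, 3, 1)$, run a case analysis over the five families of positive roots, and kill almost everything with Lemma~\ref{L:chivanishing}, leaving only $\alpha=\varepsilon_1$ and $\alpha=\varepsilon_1+\varepsilon_j$ to track by hand. The parametrization by $s = |n-1-m|$ and the identification of the common dominant representative $\rho-\omega_r-\omega_{r+1}$ with $r=n-1-s$ match the paper's Case~4 computation.

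The one place where your bookkeeping is misstated is the final cancellation. You claim the $\varepsilon_1+\varepsilon_j$ contributions at $j = (n+2\pm s)/2$ "combine into the negative of the $\varepsilon_1$-contribution." That is not the actual mechanism. As the paper shows in Case~5, the two terms from $\varepsilon_1+\varepsilon_j$ and $\varepsilon_1+\varepsilon_{n+2-j}$ cancel \emph{internally}: both have $\nu_2 = 1$ (since $m = 2(n-j)+1$ and $\tilde m = 2(j-2)+1$ are odd) and their Euler characters are negatives of each other via a product of three $\varepsilon$-sign-changes, an element of odd length. Independently, the $\varepsilon_1$-pair contribution is proportional to $\nu_2(n-1-s) - \nu_2(n-1+s)$, which already vanishes when $m = n-1-s$ is odd because both $n-1\pm s$ are then odd. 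So for odd $r$ both families are separately zero; there is no cross-family cancellation, and your statement is only vacuously true. Since the final answer is unaffected, this is an imprecision of exposition rather than a gap, but you should be aware that if one tried to turn "the $\varepsilon_1+\varepsilon_j$ family negates the $\varepsilon_1$ family" into an actual computation, it would not go through — the $\nu_2$ values in the two families are not structurally linked.

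One small bonus in the paper you may want to note: the excluded midpoint $m=n-1$ for $\alpha=\varepsilon_1$ is handled there by the symmetry $s_{\alpha,2m}\cdot\lambda = \lambda - 2m\alpha$ forcing $\chi = -\chi = 0$; your alternative (first coordinate zero $\Rightarrow$ $\chi=0$ via Lemma~\ref{L:chifacts}(a)) also works and unifies that case with the self-paired $\varepsilon_1+\varepsilon_{(n+2)/2}$ case, which is a mild streamlining.
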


\begin{proof} Set $\la = \rho - \omega_1$.  Then $\la + \rho = 2\rho - \omega_1 =  (2n-2)\ep_1 + \sum_{s = 2}^{n}(2(n-s) + 1)\ep_s$ or 
\begin{equation}\label{E:lambda}
(\la + \rho)_{\ep} = (2n-2, 2n-3, 2n-5, \dots, 3, 1). 
\end{equation}
For each positive root $\a$, which will be considered in the epsilon-basis, we consider all $\chi(s_{\a,2m}\cdot\la)$ that may occur.  More precisely, using Lemma \ref{L:chifacts}(d), we consider $\chi(\la - 2m\a)$. For many $\a$, by the nature of $\la - 2m\a + \rho$, we may apply Lemma \ref{L:chivanishing} to conclude that $\chi(\la - 2m\a)$ (and hence $\chi(s_{\a,2m}\cdot\la)$) is zero. In some other cases, we will see that while terms initially survive, they appear more than once and will cancel, in the end leaving only the stated weights.

\vskip.2cm\noindent
{\bf Case 1.} $\a = \ep_i + \ep_j$ for $1 < i < j \leq n$.
\vskip .15cm 
Here $\langle \la + \rho,\a^{\vee}\rangle =  2(n-i) + 2(n-j) + 2$. We show that $\chi(\la - 2m\a) = 0$ for $2 \leq 2m \leq  2(n - i) + 2(n - j)$ by considering the components of $(\la + \rho - 2m\a)_{\ep}$ and applying Lemma \ref{L:chivanishing}.  Write $(\la + \rho - 2m\a)_{\ep} = (c_1, c_2, \dots, c_n)$.  Note that the $c_s$ match those in \eqref{E:lambda} with two exceptions:  $c_i = 2(n-i) + 1 - 2m$ and $c_j = 2(n-j) + 1 - 2m$.  Consider $c_i$. Based on the possible values for $m$, we have
$$
-2(n-(j+1)) - 1 = 2(n-i) + 1 - [2(n-i) + 2(n-j)] \leq c_i \leq 2(n-i) + 1 - 2 = 2(n - (i+1)) + 1.
$$
When $c_i < 0$, we have $-2(n - (j+1)) - 1  \leq c_i \leq -1$.  Therefore $|c_i| = c_t$ for some $j + 1 \leq t \leq n$, and the vanishing follows as claimed.  When $c_i \geq 0$, we have $1 \leq c_i \leq 2(n-(i+1)) + 1$.  Here $c_i = c_t$ for some $i + 1 \leq t \leq n$, with one exception: when $t = j$ or $c_i = 2(n-j) + 1$. That occurs when $2(n-i) + 1 - 2m = 2(n-j) + 1$ or when $2m = 2j - 2i$.  In that situation, 
$$
c_j = 2(n-j) + 1 - 2m = 2(n-j) + 1 - (2j - 2i) = 2(n - 2j + i) + 1.
$$
Since $j > i$, $c_j \neq 2(n-j) + 1$. However, it is possible that $c_j = -2(n-j) - 1$.  In that case, $|c_j| = c_i$, and we are done. 
In general, since $i + 1 \leq j \leq n$, we have
$$ 
-2(n - (i + 1)) - 1 = 2(n - 2n + i) + 1 \leq c_j \leq   2(n - 2(i+1) + i) + 1 = 2(n - (i + 2)) + 1.
$$
If $|c_j| \neq 2(n-j) + 1$, then we see that $|c_j| = c_t$ for some $i + 1 \leq t \leq n$, $t \neq j$, and the claim follows.

\vskip.2cm\noindent
{\bf Case 2.} $\a = \ep_i - \ep_j$ for $1 \leq i < j \leq n$.
\vskip .15cm 
Here 
$$\langle \la + \rho,\a^{\vee}\rangle = 
\begin{cases} 2n - 2 - 2(n - j) - 1 = 2(j - 1) - 1&\text{ if } i = 1\\
 2(n-i) + 1 - 2(n-j) - 1 = 2(j - i) &\text{ if } i > 1.
 \end{cases}
 $$
We need to show that $\chi(\la - 2m\a) = 0$ for $2 \leq 2m \leq  2(j - i) - 2$ (which is vacuous if $j = i + 1$).  We proceed as in Case 1 to apply Lemma \ref{L:chivanishing}.  Write $(\la + \rho - 2m\a)_{\ep} = (c_1, c_2, \dots, c_n)$.  Note that the $c_s$ match those in \eqref{E:lambda} with two exceptions:  $c_i$ and $c_j$.  Here we need to consider only $c_j = 2(n - j) + 1 + 2m$. We have
 $$
 2(n - (j - 1)) + 1 = 2(n-j) + 1 + 2 \leq c_j \leq 2(n-j) + 1 + 2(j-i) - 2 = 2(n - (i + 1)) + 1.
 $$
 Therefore, $c_j = c_t$ for some $i + 1 \leq t \leq j - 1$, and the claim follows. 
 
 \vskip.2cm\noindent
 {\bf Case 3.} $\a = \ep_i$ for $i > 1$.
\vskip .15cm 
Here $\langle \la + \rho,\a^{\vee}\rangle =  2(2(n-i) + 1) = 4(n-i) + 2$. We need to show that $\chi(\la - 2m\a) = 0$ for $2 \leq 2m \leq 4(n-i)$ (which is vacuous if $i = n$).  We proceed as above and apply Lemma \ref{L:chivanishing}.  Write $(\la + \rho - 2m\a)_{\ep} = (c_1, c_2, \dots, c_n)$.  Note that the $c_s$ match those in \eqref{E:lambda} with one exception:  $c_i = 2(n-i) + 1 - 2m$.  Based on the possible values for $m$, we have
$$
-2(n -(i + 1)) - 1 = 2(n-i) + 1 - 4(n-i) \leq c_i \leq 2(n-i) + 1 - 2 = 2(n - (i + 1)) + 1.
$$
Therefore, $|c_i| = c_t$ for some $i + 1 \leq t \leq n$.

 \vskip.2cm\noindent
 {\bf Case 4.} $\a = \ep_1$.
\vskip .15cm 
Here $\langle \la + \rho,\a^{\vee}\rangle =  2(2n - 2) = 4(n-1)$. We need to consider $\chi(s_{\a,2m}\cdot\la) = - \chi(\la - 2m\a)$ for $2 \leq 2m \leq 4(n-1) - 2$.   Unlike the previous cases, these do not all vanish, although some will be seen to cancel.   Consider first the case $2m = 2(n-1)$ (or $m = n - 1$).  By definition, 
$$
\chi(s_{\a,2m}\cdot\la) = \chi(\la - (4(n-1) - 2(n-1))\a) = \chi(\la - 2(n-1)\a) = \chi(\la - 2m\a).
$$
Since this also equals (as noted above) $-\chi(\la - 2m\a)$, we must have $\chi(\la - 2m\a) = 0$.  

The remaining cases will be considered in pairs: $2m$ and $4(n-1) - 2m$ for $2 \leq 2m \leq 2(n - 2)$ or $1 \leq m \leq n - 2$.  
Write $(\la + \rho - 2m\a)_{\ep} = (c_1,c_2,\dots,c_n)$ as before, again, this agrees with \eqref{E:lambda} except in the first component, where $c_1 = 2n - 2 - 2m$.  
On the other hand, the first component of $\la + \rho - [4(n - 1) - 2m]\a$ is 
$$2n - 2 - [4(n-1) - 2m] = -2n + 2 + 2m = -(2n -2 - 2m).$$  
Let $w$ be the reflection in the $\ep_1$-hyperplane, then $w(\la + \rho - 2m\a) = \la + \rho - (4(n-1) - 2m))\a$.  Since the length of $w$ is odd, by Lemma \ref{L:chifacts}(b), $\chi(\la - 2m\a) = -\chi(\la - (4(n-1)-2m)\a)$.  If $m$ is odd, $\nu_2(2m) = 1 = \nu_2(2(2(n-1) - m)) = \nu_2(4(n-1) - 2m)$, and so the two characters cancel.   If $m$ is even, the characters do not necessarily cancel.  

Suppose that $m$ is even and $2 \leq m \leq n - 2$.  Let $w$ be the Weyl group element such that $w\cdot(\la  - 2m\a) = w(\la +\rho - 2m\a) - \rho$ is dominant.  Then $\chi(\la - 2m\a) = (-1)^{\ell(w)}\chi(w\cdot(\la - 2m\a))$. We have
$$
(\la + \rho - 2m\a)_{\ep} = (2n - 2 - 2m, 2n - 3, 2n - 5, \dots, 3, 1).
$$
Note that all components are necessarily positive.  As discussed in the proof of Lemma \ref{L:chivanishing}, we obtain the components of $(w(\la + \rho - 2m\a))_{\ep}$ by placing these in decreasing order (left to right).  So we get
$$
(w(\la + \rho - 2m\a))_{\ep} = (2n - 3, 2n - 5, \dots, 2n - 1 - 2m , 2n - 2 - 2m, 2n - 3 - 2m, \dots, 3, 1).
$$
Hence,
$$
(w(\la + \rho - 2m\a))_{\omega} = (2, 2, \dots, 2,1,1,2,\dots,2,2),
$$
where the $1$'s are in the $m$th and $(m + 1)$st components.  Subtracting $\rho$ gives
$$
(w\cdot(\la - 2m\a))_{\omega} = [w(\la + \rho - 2m\a) - \rho]_{\omega} = (1,\dots,1,0,0,1,\dots, 1),
$$
with the zeros in the same locations as above. Hence, $w\cdot(\la - 2m\a) = \rho - \omega_m - \omega_{m+1}$  and multiples of $\chi(\rho-\omega_m - \omega_{m+1})$ may appear in the sum. 
There are no other contributions in this case.

 \vskip.2cm\noindent
 {\bf Case 5.} $\a = \ep_1 + \ep_j$ for $2 \leq j \leq n$.
 \vskip .15cm 
 Here $\langle \la + \rho,\a^{\vee}\rangle =  2(n - 1) + 2(n - j) + 1$. We need to consider $\chi(s_{\a,2m}\cdot\la) = - \chi(\la - 2m\a)$ for 
 $2 \leq 2m \leq  2(n - 1) + 2(n - j)$. We will use Lemma \ref{L:chivanishing} to show that most of these vanish, with some remaining cases seen to cancel, so that these terms give no further contribution to the sum formula.  Write $(\la + \rho - 2m\a)_{\ep} = (c_1, c_2, \dots, c_n)$.  Note that the $c_s$ match those in \eqref{E:lambda} with two exceptions:  $c_1$ and $c_j$.  Consider $c_j = 2(n-j) + 1 - 2m$.  Based on the values of $m$, we have
 $$
-2(n-2) - 1 =  2(n-j) + 1 - 2(n-1) - 2(n-j) \leq c_j \leq 2(n-j) + 1 - 2 = 2(n - (j+1)) + 1.
 $$
If $c_j \geq 0$, then we have $1 \leq c_j \leq 2(n - (j + 1)) + 1$. So $c_j = c_t$ for some $j + 1 \leq t \leq n$ and the terms vanish by Lemma \ref{L:chivanishing}.   If $c_j < 0$, we have $-2(n-2) - 1 < c_j < -1$ and $|c_j| = c_t$ for some $2 \leq t \leq n$, unless $t = j$.  That is, when $2(n-j) + 1 - 2m = -2(n-j) - 1$ or $2m = 4(n-j) + 2$.

Suppose $2m = 4(n-j) + 2$ and consider the first component: $c_1 = 2n - 2 - 4(n-j) - 2 = -2n + 4j - 4 = -2(n - 2j + 2)$.  If $n$ is even, this is zero when $n = 2j - 2$ or $j = \frac{n+2}{2}$.  Let $w \in W$ be such that $w(\la + \rho - 2m\a)$ is dominant.   Since $(\la + \rho - 2m\a)_{\ep}$ has a zero in the first component, $(w(\la + \rho - 2m\a))_{\ep}$ has a zero in the last component.   Hence, the last component of $(w(\la + \rho - 2m\a))_{\omega}$ is also zero, and so the last component of $(w\cdot(\la - 2m\a))_{\omega} = (w(\la + \rho - 2m\a) - \rho)_{\omega}$ is $-1$.  From Lemma \ref{L:chifacts}(a) and (b), it follows that $\chi(\la - 2m\a) = 0$.  So we are done for $j = \frac{n + 2}{2}$.

Recapping: for any $2 \leq j \leq n$ with $j \neq \frac{n + 2}{2}$, we know that $\chi(\la - 2m\a) = 0$ except when $2m = 4(n-j) + 2$ (or $m = 2(n-j) + 1$).  Split $\{j : 2 \leq j \leq n, j \neq \frac{n+2}{2}\}$  into pairs $j$ and $n + 2 - j$ for $2 \leq j \leq \lfloor \frac{n+1}{2} \rfloor$.  For each such $j$, we show that $\nu_{2}(2m)\chi(\la - 2m\a) = -\nu_{2}(2\tilde{m})\chi(\la - 2\tilde{m}\tilde{\a})$, where $\tilde{m} = 2(j -2) + 1$ is the relevant ``$m$'' for the index $n + 2 - j$ and $\tilde{\a} = \ep_1 + \ep_{n+2 - j}$.  This shows that these remaining characters cancel, so there is no contribution from the $\ep_1 + \ep_j$.

First, observe that both $m$ and $\tilde{m}$ are odd, so $\nu_2(2m) = 1 = \nu_2(2\tilde{m})$.  As above, write $(\la + \rho - 2m\a)_{\ep} = (c_1,c_2, \dots,c_n)$. Then the $c_s$ match those in \eqref{E:lambda} with two exceptions: 
$$
c_1 = -2(n - 2j + 2) \text{ and } c_j = 2(n-j) + 1 - 4(n-j) - 2 = -2(n - j) - 1.
$$  
Similarly, write $(\la + \rho -2\tilde{m}\tilde{\a})_{\ep} = (d_1, d_2, \dots,d_n)$. Again, the $d_s$ agree with those in \eqref{E:lambda} with two exceptions:
$$
 d_1 = 2n - 2 - 4(j-2) - 2 = 2(n-2j + 2) \text{ and } d_{n + 2 - j} = 2(j - 2) + 1 - 4(j-2) - 2 = -2(j-2) - 1.
 $$
We see that $c_s = d_s$ for $s \neq 1, j, n + 2 - j$, while $c_1 = -d_1$, $c_j = -d_j$, and $c_{n+2-j} = -d_{n+2-j}$.  Let $w_i$ denote the reflection in the $\ep_i$-hyperplane and set $w = w_1w_jw_{n+2-j}$ (the ordering being irrelevant).  Then $w(\la + \rho - 2m\a) = \la + \rho - 2\tilde{m}\tilde{\a}$, and so $\chi(\la - 2m\a) = (-1)^{\ell(w)}\chi(w\cdot(\la - 2m\a) = (-1)^{\ell(w)}\chi(\la - 2\tilde{m}\tilde{\a})$.  Since each $w_i$ has odd length, so does $w$, and the claim that $\nu_{2}(2m)\chi(\la - 2m\a) = -\nu_{2}(2\tilde{m})\chi(\la - 2\tilde{m}\tilde{\a})$ follows.  

\end{proof}

\subsection{A Second Failure of the TMC for Type $\rm{C}_3$}\label{S:2ndC3}
Let $G$ be of type $\rm{C}_3$ with $p = 3$. In Section \ref{S:TMCC3}, we saw that $T(4\rho)$ fails to be indecomposable upon restriction to $G_1T$. In this section, we will show that the tilting module $T(4\rho-\omega_1)$ also fails to remain indecomposable when restricted to $G_1T.$ Note that, in this subsection, we return to expressing weights solely in the omega-basis.

 \begin{theorem}\label{T:C3-more} Let $G$ be a simple algebraic group of type $\rm{C}_3$ and $p=3.$ The tilting module $T(4 \rho-\omega_1) $ is not isomorphic to $\widehat{Q}_1(\omega_1)$ as a $G_1T$-module. 
\end{theorem}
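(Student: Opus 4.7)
The plan is to argue by contradiction, closely paralleling the proof of the analogous TMC failure in Section~\ref{S:TMCC3} for $\lambda=(2,2,2)$, but with the projective indecomposable $\widehat{Q}_1(\omega_1)$ in place of $\widehat{Q}_1(0)$. Assume $T(4\rho-\omega_1)|_{G_1T} \cong \widehat{Q}_1(\omega_1)$. Then by \cite[Theorem 9.2.3]{KN}, the module $\operatorname{Hom}_{G_1}(\widehat{Q}_1(\omega_1),\nabla(\mu))^{(-1)}$ must admit a good filtration for every $\mu\in X^+$, and our task is to exhibit a specific $\mu$ for which this fails.

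I would select $\mu$ to be a dominant weight in the same $W_p$-linkage block as $\omega_1$, chosen so that $\Delta(\mu)$ contains a non-restricted composition factor of the form $L(\omega_1+3\tau)$ for a nonzero dominant $\tau$---the natural analogue of $L(0,3,0)=L(3\omega_2)$ appearing in $\Delta(2,1,2)$ in Section~\ref{S:TMCC3}. Because in $\rm{C}_3$ the weight $(1,3,0) = \omega_1 + 3\omega_2$ lies in the linkage class of $\omega_1$ (a short $\epsilon$-basis calculation analogous to those of Section~\ref{S:compfactors}), natural candidates for $\mu$ are weights dominating $(1,3,0)$ and obtained by shifting the choice $(2,1,2)$ by $\omega_1$, for example $(3,1,2)$, $(1,3,2)$, or a nearby weight. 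With a candidate $\mu$ fixed, I would compute the Jantzen sum formula for $\Delta(\mu)$ via Section~\ref{S:Ja-Fil}, using the Euler-characteristic vanishing criteria of Lemmas~\ref{L:chifacts} and~\ref{L:chivanishing} to cancel terms and produce a table of composition multiplicities of $\Delta(\mu)$ together with their locations within the Jantzen layers $\Delta(\mu)^i$.

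The argument then splits into the same two cases as Section~\ref{S:TMCC3}, according to whether the non-restricted factor $L(\omega_1+3\tau)$ lies entirely within $\Delta(\mu)^2$ or already appears in $\Delta(\mu)^1/\Delta(\mu)^2$. In each case, dualizing by the $\tau$-functor (using that Jantzen layers are $\tau$-invariant) and applying $\operatorname{Hom}_{G_1}(\widehat{Q}_1(\omega_1),-)$ should produce, after Frobenius untwist, a short exact sequence of the form
\begin{equation*}
0 \to L(\sigma_1) \to \operatorname{Hom}_{G_1}(\widehat{Q}_1(\omega_1),\nabla(\mu))^{(-1)} \to L(\sigma_2) \to 0
\end{equation*}
(or alternatively a character with too many simple factors) whose composition factors cannot be assembled into any good filtration by $\nabla(\sigma)$'s of the required weights---exactly as in Cases 1 and 2 of Section~\ref{S:TMCC3}. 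This contradicts \cite[Theorem 9.2.3]{KN} and hence rules out the identification $T(4\rho-\omega_1)|_{G_1T}\cong \widehat{Q}_1(\omega_1)$.

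The principal obstacle is combinatorial: pinning down the correct weight $\mu$ and carrying out the Jantzen sum formula computation for that $\mu$. Since $\omega_1$ is not central, the linkage class is larger and the set of surviving $\chi(\nu)$-contributions in the Jantzen sum formula for $\Delta(\mu)$ is substantially richer than in the $(2,2,2)$ case, requiring careful Euler-characteristic bookkeeping in the spirit of the proof of Proposition~\ref{P:Deltastructure}. Once the correct $\mu$ is identified and its character data is in hand, the final case analysis should follow the template already established in Section~\ref{S:TMCC3}.
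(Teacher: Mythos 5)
Your overall strategy is the one the paper uses: assume the isomorphism, invoke \cite[Theorem 9.2.3]{KN} to force a good filtration on $\Hom_{G_1}(\widehat{Q}_1(\omega_1),\nabla(\mu))^{(-1)}$, and then refute this via Jantzen sum formula data for a well-chosen $\mu$ whose Weyl module contains a non-restricted factor. However, there is a genuine gap in the key combinatorial step: the candidate weights you propose, $(3,1,2)$ and $(1,3,2)$, do not lie in the correct linkage class. Writing in the $\epsilon$-basis ($\rho = 3\epsilon_1 + 2\epsilon_2 + \epsilon_3$ for $\rm{C}_3$), one finds $(3,1,2)+\rho = 9\epsilon_1 + 5\epsilon_2 + 3\epsilon_3$ and $(1,3,2)+\rho = 9\epsilon_1 + 7\epsilon_2 + 3\epsilon_3$, both of which reduce modulo $3$ to tuples with two zero entries; these weights lie on walls of the affine alcove structure in a way that puts them in a singular block, so neither $L(1,0,0)$ nor the desired non-restricted factor $L(1,3,0)$ can appear as composition factors of those Weyl modules. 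The heuristic of ``shifting $(2,1,2)$ by $\omega_1$'' does not produce the correct weight; the paper uses $\mu = (2,2,1)$, for which $(2,2,1)+\rho = 8\epsilon_1 + 5\epsilon_2 + 2\epsilon_3$ is regular and lies in the block of $\omega_1$.

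A second, smaller issue: you assert that the final case analysis ``should follow the template already established in Section~\ref{S:TMCC3}'' with the same two cases. In fact the multiplicity constraints coming from the Jantzen sum formula for $\Delta(2,2,1)$ are looser than for $\Delta(2,1,2)$ (the relevant sums are $3$ for $L(1,3,0)$ and $2$ for $L(1,0,0)$, so both composition multiplicities can a priori take several values), and the resulting case split is more elaborate. In particular, one of the cases requires passing to the short exact sequence
\begin{equation*}
0 \to \Delta(2,2,1)^3 \to \Delta(2,2,1) \to Q \to 0
\end{equation*}
rather than the one truncated at $\Delta(\cdot)^2$ used in Section~\ref{S:TMCC3}. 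Your outline identifies the right method and the right target contradiction, but without the correct $\mu$ and without anticipating the need for the deeper Jantzen layer, the argument does not close.
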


\noindent 
Data obtained via the Jantzen Filtration yields the following table:

\vskip.25cm
\begin{center}
\begin{tabular}{|c|c|}
\hline
$\la$ & $\sum_{i>0} \text{ch } \Delta(\la)^i$\\
\hline
\hline
$(1,0,0)$ & $\emptyset$ \\
\hline
$(0,1,1)$ & $\emptyset$ \\
\hline
$(2,1,1)$ &  $\chi(0,1,1)$ \\
\hline
$(1,3,0)$ & $\chi(2,1,1))- \chi(0,1,1)+ \chi(1,0,0)$ \\
\hline
$(3,2,0)$ &  $\chi(1,3,0) + \chi(2,1,1))+ \chi(1,0,0)$ \\
\hline
 $(2,2,1)$& $\chi(3,2,0) + 2 \cdot \chi(1,3,0) + \chi(2,1,1) - 2\cdot \chi(1,0,0)$ \\
 \hline
 \end{tabular}
\end{center}
\vskip.25cm

It follows that $\Delta(1,0,0)$ and $\Delta(0,1,1)$ are simple and that $\Delta(2,1,1)$ has length two, the second factor having highest weight $(0,1,1).$ Character considerations now show that $\Delta(1,3,0)$ is multiplicity free with three composition factors, including factors with highest weights $(2,1,1)$ and $(1,0,0).$ Similarly one concludes from character data that $\Delta(3,2,0)$ has six composition factors, namely
the simple modules with highest weights $(3,2,0)$, $(1,3,0)$, $(2,1,1)$, $(0,1,1)$, and $(1,0,0)$ twice.
One concludes that  
$$\sum_{i>0} \text{ch } \Delta(2,2,1)^i=  \text{ch }L(3,2,0) + 2 \cdot \text{ch }L(0,1,1) + 2 \cdot \text{ch }L(1,0,0) + 3 \cdot \text{ch }L(1,3,0) + 4 \cdot \text{ch }L(2,1,1).$$

Suppose that $T(4\rho - \omega_1) \cong \widehat{Q}_1(\omega_1)$ as $G_{1}T$-module.  Then 
$$V := \Hom_{G_1}(\hQ_1(1,0,0),\nabla(2,2,1))^{(-1)}$$ 
admits a good filtration. 
We argue in a similar manner as in Section \ref{S:TMCC3}.  
From above, we see that $V$ has composition factors of $L(\omega_2)$ and $k$, with the number (and arrangement) of such factors determined by the the number (and arrangment) of copies of $L(1,3,0)$ and $L(1,0,0)$, respectively, in $\nabla(2,2,1)$.   From above, we have that $\Delta(2,2,1)$ (and hence $\nabla (2,2,1)$) contains between one and three copies of $L(1,3,0)$  and one or  two copies of $L(1,0,0)$.  
Analogous to the argument in Case 1 of Section \ref{S:TMCC3}, for $V$ to admit a good filtration, the number of  copies of $L(1,0,0)$ appearing must be at least the number of copies of $L(1,3,0)$ that appear. In particular, there can in fact be at most 2 copies of $L(1,3,0)$.   Suppose that $[\Delta(2,2,1) : L(1,0,0)] = 2$.  Then $[\Delta(2,2,1)^2 : L(1,0,0)] = 0$. 
Furthermore, $[\Delta(2,2,1) : L(1,3,0)] = 1 \text{ or } 2$, and, in either case, $[\Delta(2,2,1)^2 : L(1,3,0)] = 1$.  One may now argue as in Case 2 of Section \ref{S:TMCC3}, starting from an exact sequence
$$
0 \to \Delta(2,2,1)^2 \to \Delta(2,2,1) \to Q \to 0,
$$
to obtain a similar contradiction.
Lastly, suppose $[\Delta(2,2,1) : L(1,0,0)] = 1$, then 
\begin{align*}
[\Delta(2,2,1)^2 : L(1,0,0)] &= 1,\\
[\Delta(2,2,1) : L(1,3,0)] &= 1,\\
[\Delta(2,2,1)^2 : L(1,3,0)] &= 0, \text{ and }\\
[\Delta(2,2,1)^3 : L(1,3,0)] &= 1.
\end{align*}
In this case, one starts from an exact sequence
$$
0 \to \Delta(2,2,1)^3 \to \Delta(2,2,1) \to Q \to 0
$$
to obtain a contradiction.


\section{Further Questions}\label{S:questions}

\subsection{}

We define the condition (ET) as

\vspace{0.2in}

\noindent (ET) \quad $\Ext_{G_r}^1(L(\lambda),L(\mu))^{(-r)}$ \; is tilting as a $G$-module for all \; $\lambda,\mu \in X_r$.
\vspace{0.2in}

The evidence thus far suggests that there is some connection between this condition and the TMC.  Indeed, (ET) fails in each of the (known) low rank counterexamples to the TMC, and in the present paper, the former served to help detect the latter.  At the same time, in every case in which we know that the TMC holds, we also know that (ET) holds.

In the course of this discussion, one would like to know the answer to the following question, which is of interest in its own right.

\begin{quest}
    Under what conditions does (ET) hold?
\end{quest}

In \cite[Thm. 4.3.1]{BNPS23}, the authors proved that if $p \ge 2h-4$, then (ET) holds for $r = 1$. Moreover, the structure is semi-simple (so that the tilting factors are all simple $G$-modules). This improved an earlier confirmation of (ET) for $r = 1$ by Andersen \cite{And84} for $p \ge 3h-3$ and by Bendel, Nakano, and Pillen \cite[\S 5.5]{BNP04} for $p \geq 2h - 2$.  In the recent work \cite{BNPS23} of the authors, it was also shown that the TMC holds when $p\geq 2h-4$. Further sharpening of this bound in (ET) could be a key step in lowering the bound of validity for the TMC.

\subsection{}

Another mystery to unravel is precisely how (ET) and the TMC relate to each other.

\begin{quest}
    Is the TMC equivalent to (ET)?  If not, does one imply the other?
\end{quest}

A revised problem is to consider this for each restricted $\lambda$.

\begin{quest}
    Is $T(2(p^r-1)\rho+w_0\lambda)$ indecomposable over $G_r$ if and only if \\
    $\Ext_{G_r}^1(L(\lambda),L(\mu))^{(-r)}$ is tilting for all $\mu \in X_r$ where $\mu \ge_{\mathbb{Q}} \lambda$?
\end{quest}

To motivate this question, as observed in Section \ref{S:1st}, if $Q_r(\la)$ and $Q_r(\mu)$ admit a $G$-structure, then we have an embedding of $G$-modules
$$\Ext_{G_r}^1(L(\lambda),L(\mu))^{(-r)} \subseteq \Hom_{G_r}(Q_r(\lambda),Q_r(\mu))^{(-r)}.$$
The idea then is that if the submodule is not tilting over $G$, potentially the larger Hom-set cannot carry such a structure either, in which case one or both of $Q_r(\lambda)$ and $Q_r(\mu)$ cannot lift to a tilting module for $G$.

The final stipulation, that $\mu \ge_{\mathbb{Q}} \lambda$, is to isolate the larger of the two projective covers, $Q_r(\lambda)$, as the one that must fail to be tilting if only one of two does.

\subsection{}

We also have observed that the $G_1T$-projective cover of the trivial module fails to be isomorphic to $T(2(p-1)\rho)$ in all cases in which the TMC fails.  This raises another question as to whether the TMC is equivalent 
to $T(2(p-1)\rho)\mid_{G_{1}T}\cong \widehat{Q}(0)$. 

\begin{quest}
    Does the TMC hold if and only if $T(2(p-1)\rho)$ is indecomposable over $G_1$?
\end{quest}

We note that if there is an affirmative answer to this question and to the questions raised in the previous subsections, then in such a case the problem of checking the validity of the TMC would reduce to checking if
$\text{H}^1(G_1,L(\lambda))^{(-1)}$ is tilting for all $\lambda \in X_1$.

\providecommand{\bysame}{\leavevmode\hbox
to3em{\hrulefill}\thinspace}

\end{document}